\newtheorem{theorem}{Theorem}[section]
\newtheorem{lemma}{Lemma}[section]
\newtheorem{corollary}{Corollary}[section]
\theoremstyle{definition}
\newtheorem{definition}{Definition}
\newtheorem{remark}{Remark}[section]
\numberwithin{equation}{section}
\newcommand{\beq}{\begin{equation}}
\newcommand{\bea}[1]{\begin{array}{#1} }
\newcommand{\eeq}{ \end{equation}}
\newcommand{\ea}{ \end{array}}
\newcommand{\om}{\omega}
\def \rnn {{\mathbb {R}}^{N+1}}
\def \d {{\delta}}
\def\mean#1{\mathchoice%
          {\mathop{\kern 0.2em\vrule width 0.6em height 0.69678ex depth -0.58065ex
                  \kern -0.8em \intop}\nolimits_{\kern -0.4em#1}}%
          {\mathop{\kern 0.1em\vrule width 0.5em height 0.69678ex depth -0.60387ex
                  \kern -0.6em \intop}\nolimits_{#1}}%
          {\mathop{\kern 0.1em\vrule width 0.5em height 0.69678ex
              depth -0.60387ex
                  \kern -0.6em \intop}\nolimits_{#1}}%
          {\mathop{\kern 0.1em\vrule width 0.5em height 0.69678ex depth -0.60387ex
                  \kern -0.6em \intop}\nolimits_{#1}}}
\def\vintslides_#1{\mathchoice%
          {\mathop{\kern 0.1em\vrule width 0.5em height 0.697ex depth -0.581ex
                  \kern -0.6em \intop}\nolimits_{\kern -0.4em#1}}%
          {\mathop{\kern 0.1em\vrule width 0.3em height 0.697ex depth -0.604ex
                  \kern -0.4em \intop}\nolimits_{#1}}%
          {\mathop{\kern 0.1em\vrule width 0.3em height 0.697ex depth -0.604ex
                  \kern -0.4em \intop}\nolimits_{#1}}%
          {\mathop{\kern 0.1em\vrule width 0.3em height 0.697ex depth -0.604ex
                  \kern -0.4em \intop}\nolimits_{#1}}}
\newcommand{\aveint}[2]{\mathchoice%
          {\mathop{\kern 0.2em\vrule width 0.6em height 0.69678ex depth -0.58065ex
                  \kern -0.8em \intop}\nolimits_{\kern -0.45em#1}^{#2}}%
          {\mathop{\kern 0.1em\vrule width 0.5em height 0.69678ex depth -0.60387ex
                  \kern -0.6em \intop}\nolimits_{#1}^{#2}}%
          {\mathop{\kern 0.1em\vrule width 0.5em height 0.69678ex depth -0.60387ex
                  \kern -0.6em \intop}\nolimits_{#1}^{#2}}%
          {\mathop{\kern 0.1em\vrule width 0.5em height 0.69678ex depth -0.60387ex
                  \kern -0.6em \intop}\nolimits_{#1}^{#2}}}
\def\eqn#1$$#2$${\begin{equation}\label#1#2\end{equation}}
\def\charfn_#1{{\raise1.2pt\hbox{$\chi
_{\kern-1pt\lower3pt\hbox{{$\scriptstyle#1$}}}$}}}
\def\qq1{q_*}
\def\q2{q_{**}}
\def\dist{\operatorname{dist}}
\newdimen\vintbar
\def\vint{-\kern-\vintbar\int}
\def\I{\mathcal I}
\def\K{\mathcal K}
\def\Z{\mathcal Z}
\def\0{\boldsymbol 0}
\newcommand{\R}{\mathbb R}
\renewcommand{\Z}{\mathbb Z}
\newcommand{\N}{\mathbb N}
\newcommand{\eps}{\epsilon}
\newcommand{\ol}{\overline}
\newcommand{\tr}{\operatorname{tr}}
\def\Xint#1{\mathchoice
{\XXint\displaystyle\textstyle{#1}}%
{\XXint\textstyle\scriptstyle{#1}}%
{\XXint\scriptstyle\scriptscriptstyle{#1}}%
{\XXint\scriptscriptstyle%
\scriptscriptstyle{#1}}%
\!\int}
\def\XXint#1#2#3{{\setbox0=\hbox{$#1{#2#3}{%
\int}$ }
\vcenter{\hbox{$#2#3$ }}\kern-.6\wd0}}
\def\barint{\,\Xint -} 
\def\bariint{\barint_{} \kern-.4em \barint}
\def\bariiint{\bariint_{} \kern-.4em \barint}
\renewcommand{\iint}{\int_{}\kern-.34em \int} 
\renewcommand{\iiint}{\iint_{}\kern-.34em \int} 
\newtoks\by
\newtoks\paper
\newtoks\book
\newtoks\jour
\newtoks\yr
\newtoks\pages
\newtoks\vol
\newtoks\publ
\def\name[#1, #2]{#1 #2}
\def\ota{{\hbox{\bf ???}}}
\def\cLear{\by=\ota\paper=\ota\book=\ota\jour=\ota\yr=\ota
\pages=\ota\vol=\ota\publ=\ota}
\def\endpaper{\the\by, \textit{\the\paper},
{\the\jour} \textbf{\the\vol} (\the\yr), \the\pages.\cLear}
\def\endbook{\the\by, \textit{\the\book},
\the\publ, \the\yr.\cLear}
\def\endpap{\the\by, \textit{\the\paper}, \the\jour.\cLear}
\def\endproc{\the\by, \textit{\the\paper}, \the\book, \the\publ,
\the\yr, \the\pages.\cLear}
\renewcommand{\d}{\, \mathrm{d}}
\newcommand{{\II}}{\textrm{II}}
\begin{document}
\title[Tug-of-war with Kolmogorov]{Tug-of-war with Kolmogorov}

\address{Carmina Fjellstr\"{o}m\\Department of Mathematics, Uppsala University\\
S-751 06 Uppsala, Sweden}
\email{carmina.fjellstrom@math.uu.se}

\address{Kaj Nystr\"{o}m\\Department of Mathematics, Uppsala University\\
S-751 06 Uppsala, Sweden}
\email{kaj.nystrom@math.uu.se}

\address{Matias Vestberg\\Department of Mathematics, Uppsala University\\
S-751 06 Uppsala, Sweden}
\email{matias.vestberg@math.uu.se}


\author{Carmina Fjellstr\"{o}m, Kaj Nystr{\"o}m and Matias Vestberg}


\begin{abstract}
\noindent \medskip
We introduce a new class of strongly degenerate nonlinear parabolic PDEs
$$((p-2)\Delta_{\infty,X}^N+\Delta_X)u(X,Y,t)+(m+p)(X\cdot\nabla_Yu(X,Y,t)-\partial_tu(X,Y,t))=0,$$
$(X,Y,t)\in\mathbb R^m\times \mathbb R^m\times \mathbb R$, $p\in (1,\infty)$, combining the classical PDE of Kolmogorov and the normalized $p$-Laplace operator.
We characterize solutions in terms of an asymptotic mean value property and the results
are connected to the analysis of certain tug-of-war games with noise.  The value functions for the games introduced approximate solutions to the stated PDE  when the
parameter that controls the size of the possible steps goes to zero.  Existence and uniqueness of viscosity solutions to the Dirichlet problem is established. The asymptotic mean value property, the associated games and the geometry underlying the Dirichlet problem, all reflect the family of dilation and  the Lie group underlying operators of Kolmogorov type, and this makes our setting different from the context of standard parabolic
dilations and Euclidean translations applicable in the context of the heat operator and the normalized parabolic infinity Laplace operator. \\

\noindent
2000  {\em Mathematics Subject Classification.} 35K51, 35K65, 35K70, 35K92, 35H20, 35R03, 35Q91, 91A80, 91A05.
\noindent

\medskip

\noindent
{\it Keywords and phrases: Dirichlet boundary conditions, dynamic programming principle,  p-Laplacian, infinity Laplacian, Kolmogorov equation, mean value property, stochastic games, tug-of-war games, viscosity solutions, ultraparabolic, hypoelliptic.}
\end{abstract}

\maketitle



\setcounter{equation}{0} \setcounter{theorem}{0}
\section{Introduction}
In recent years, there has been a surge in the study of tug-of-war games, mean-value properties, and boundary value problems for degenerate elliptic and parabolic equations modeled on the infinity Laplace operator and the $p$-Laplace operator. The impetus for these developments has been the seminal papers on  tug-of-war games of Peres, Schramm, Sheffield, and Wilson \cite{PSSW,PS}. They showed that these two-player zero-sum games have connections to homogeneous and inhomogeneous normalized PDEs in nondivergence form via the dynamic programming principle (DPP for short). Connections to nonlinear mean value formulas were developed in \cite{manfredipr10c} and \cite{manfredipr12} and, concerning related boundary value problems, we mention \cite{juutinenk06,kawohlmp12}.

In \cite{manfredipr10c},  the authors contribute to the dynamic and parabolic part of the theory by establishing  mean value formulas for certain nonlinear and degenerate parabolic equations, and by relating these formulas to the dynamic programming principle satisfied by the value functions of parabolic tug-of-war games with noise.  A starting point in \cite{manfredipr10c} is the observation that a function $u=u(X,t):\mathbb R^m\times \mathbb R\to \mathbb R$ solves the heat equation
\begin{eqnarray*}
\mathcal{H}u(X,t):=\Delta_{X}u(X,t)-\partial_tu(X,t)=0,
\end{eqnarray*}
if and only if
\begin{equation*}
u(X,t)=\barint_{B_\epsilon(X)} \barint_{t-\epsilon^2/(m+2)}^t u(\tilde X,\tilde t) \d \tilde X \d \tilde t+{o}(\epsilon^2),\mbox{ as }\epsilon\to 0,
\end{equation*}
where  $B_\epsilon(X)$ denotes the standard Euclidean ball in $\mathbb R^m$ of radius $\epsilon$ and centered at $X\in \mathbb R^m$.

An important contribution in \cite{manfredipr10c} is a non-linear version of the stated characterization of solutions to $\mathcal{H}u(X,t)=0$,  stating, for $p$, $1<p<\infty$, that $u=u(X,t):\mathbb R^m\times \mathbb R\to \mathbb R$ is a viscosity solution to the equation
\begin{equation}\label{meanvalue1heatg}
((p-2)\Delta_{\infty,X}^N+\Delta_X)u(X,t)-(m+p)\partial_tu(X,t)=0,
\end{equation}
if and only if
\begin{align*}
u(X,t)&=\frac \alpha 2\barint_{t-\epsilon^2}^t\biggl ( \biggl\{\sup_{\tilde X\in{{B_\epsilon(X)}}}u(\tilde X,\tilde t)\biggr \}+\biggl\{\inf_{\tilde X\in{{B_\epsilon(X)}}}u(\tilde X,\tilde t)\biggr \}\biggr )\d \tilde t\notag\\
&+\beta \barint_{B_\epsilon(X)} \barint_{t-\epsilon^2/(m+2)}^t u(\tilde X,\tilde t) \d \tilde X \d \tilde t+{o}(\epsilon^2),\mbox{ as }\epsilon\to 0,
\end{align*}
in the viscosity sense. Here,
\begin{align*}
 \alpha:= \frac{p-2}{m+p},\hspace{7mm} \beta:= \frac{m+2}{m+p}.
\end{align*}
Note that, formally,
\begin{align*}
((p-2)\Delta_{\infty,X}^N+\Delta_X)u(X,t)&=|\nabla u(X,t)|^{2-p}\Delta_{p,X}u(X,t)\notag\\
&:=|\nabla_X u(X,t)|^{2-p}\nabla_X\cdot(|\nabla_X u(X,t)|^{p-2}\nabla_X u(X,t)),
\end{align*}
showing the connection between the $p$-Laplace operator ($\Delta_{p,X}$), the (normalized) infinity Laplace operator ($\Delta_{\infty,X}^N$) and the Laplace operator ($\Delta_X:=\Delta_{2,X}$). Furthermore, dividing through in \eqref{meanvalue1heatg} with the factor $(m+p)$, and letting $p\to\infty$, we formally also deduce that
$u=u(X,t):\mathbb R^m\times \mathbb R\to \mathbb R$ is a viscosity solution to the equation
\begin{equation*}
\Delta_{\infty,X}^Nu(X,t)-\partial_tu(X,t)=0,
\end{equation*}
if and only if
\begin{align*}
u(X,t)&=\frac 1 2\barint_{t-\epsilon^2}^t\biggl ( \biggl\{\sup_{\tilde X\in{{B_\epsilon(X)}}}u(\tilde X,\tilde t)\biggr \}+\biggl\{\inf_{\tilde X\in{{B_\epsilon(X)}}}u(\tilde X,\tilde t)\biggr \}\biggr )\d \tilde t+{o}(\epsilon^2),\mbox{ as }\epsilon\to 0,
\end{align*}
in the viscosity sense. I.e., the equivalence between solutions and mean value properties can be seen to hold for all $p$, $1<p\leq\infty$.

In \cite{manfredipr10c}, it is also proved that these mean value formulas are related to the DPP satisfied by the value functions of parabolic tug-of-war games with noise. The
DPP is exactly the mean value formula without the correction term ${o}(\epsilon^2)$. In \cite{manfredipr10c}, functions that satisfy the DPP  are called $(p,\epsilon)$-parabolic. As shown in
\cite{manfredipr10c}, $(p,\epsilon)$-parabolic equations have interesting
properties making them interesting on their own, but, in addition, they approximate
solutions to the corresponding parabolic equation, and  $(p,\epsilon)$-parabolic functions converge in the limit as $\epsilon\to 0$ to viscosity solutions of the Dirichlet problem.

In this paper, we initiate a program similar to \cite{manfredipr10c}, but in a new and different situation. Instead of the heat operator $\mathcal{H}$, our starting point is the operator
\begin{eqnarray}\label{e-kolm-nd}
   \K:=\sum_{i=1}^{m}\partial_{x_i x_i}+\sum_{i=1}^m x_i\partial_{y_{i}}-\partial_t,
    \end{eqnarray}
   acting on functions in $\mathbb R^{M+1}$, $M:=2m$, $m\geq 1$, equipped with coordinates $$(X,Y,t):=(x_1,...,x_{m},y_1,...,y_{m},t)\in \mathbb R^{m}\times\mathbb R^{m}\times\mathbb R.$$ The operator in \eqref{e-kolm-nd} was originally introduced and studied in 1934 by Kolmogorov \cite{K} as an example of a degenerate parabolic
operator having strong regularity properties.  Kolmogorov proved that $\K$ has a fundamental solution
$\Gamma = \Gamma(X,Y,t, \tilde X, \tilde Y, \tilde t)$ which is smooth on the set $\big\{(X,Y,t) \ne (\tilde X, \tilde
Y, \tilde t)\big\}$. As a consequence,
\begin{eqnarray}\label{uu3}
 \mbox{$u$ is a distributional solution to } \K u = f \in C^\infty \quad \Rightarrow \quad u \in C^\infty.
\end{eqnarray}
The property in \eqref{uu3} can be restated as
\begin{eqnarray*}
\mbox{$\K$ is hypoelliptic}.
\end{eqnarray*}
As can be read  in the introduction of H{\"o}rmander's famous paper \cite{H}, the work of Kolmogorov strongly influenced H{\"o}rmander when he developed his theory of hypoelliptic operators.

Kolmogorov was originally motivated by statistical physics and he studied $\K$ in the context of stochastic processes. The fundamental
solution $\Gamma(\cdot, \cdot, \cdot, \tilde X, \tilde Y, \tilde t)$ defines the density of the stochastic process
$(X_t,Y_t)$ which solves the Langevin system
\begin{equation}\label{e-langevin}
\left \{
\begin{aligned}
   & d X_t = \sqrt{2}d W_t, \quad X_{\tilde t} = \tilde X,\\
   & d Y_t = X_t d t, \quad\quad Y_{\tilde t} = \tilde Y,
\end{aligned}
\right.
\end{equation}
where $W_t$ is a standard $m$-dimensional Wiener process. The system in \eqref{e-langevin} is a system
with $2m$ degrees of freedom, and $(X,Y)\in\mathbb R^{2m}$, $X=(x_1,...,x_m)$ and $Y=(y_{1},...,y_{m})$, are the velocity and the position of the system, respectively. The model in \eqref{e-langevin} and the equation in \eqref{e-kolm-nd} are of fundamental
importance in kinetic theory as they form the basis for Langevin type models for particle dispersion.

The natural family of dilations
for $\K$, $(\delta_r)_{r>0}$, on $\R^{M+1}$, and the Lie group on $\R^{M+1}$ preserving $\K u=0$ are different from standard parabolic
dilations and Euclidean translations applicable in the context of the heat operator.  The operator $\K$ can be expressed as
$$ \K=\sum_{i=1}^m X_i^2 +X_0,$$
where
\begin{equation*}
 X_i:= \partial_{x_i},\quad i=1,\ldots,m, \qquad X_0:=\sum_{i=1}^mx_i\partial_{y_i}-\partial_t.
\end{equation*}
The vector fields  $X_1, \dots, X_m$ and $X_0$ are left-invariant with respect to the group law \begin{equation}\label{e70}
 (\tilde X,\tilde Y,\tilde t)\circ (X, Y,t)=(\tilde X+X,\tilde Y+Y-t\tilde X,\tilde t+t),
\end{equation}
 in the sense that
\begin{eqnarray*}
	X_i\bigl( u ((\tilde X,\tilde Y,\tilde t) \circ \, \cdot \, ) \bigr)= \left( X_i u \right) ((\tilde X,\tilde Y,\tilde t) \circ \, \cdot \, ), \quad
i=0, \dots, m,
\end{eqnarray*}
for every $(\tilde X,\tilde Y,\tilde t) \in \rnn$. Consequently, $$\K \bigl( u ((\tilde X,\tilde Y,\tilde t) \circ \, \cdot \, ) \bigr) =\bigl( \K u \bigr)
((\tilde X,\tilde Y,\tilde t) \circ \,
\cdot \, ).$$
The natural family of dilations for $\K$, $(\delta_r)_{r>0}$, on $\R^{M+1}$,
is defined by
\begin{equation*}
 \delta_r (X,Y,t) =(r X, r^3 Y,r^2 t),
\end{equation*}
for $(X,Y,t) \in \R^{M +1}$,  $r>0$. In particular, the operator $\K$ is $\delta_r$-homogeneous
of degree two, i.e., $\K\circ\delta_r=r^2 (\delta_r\!\circ \K)$, for all $r>0$. Furthermore, note that
\begin{equation*}
(X,Y,t)^{-1}=(-X,-Y-tX,-t),
\end{equation*}
and hence,
\begin{equation}\label{e70++}
 (\tilde  X,\tilde  Y,\tilde  t)^{-1}\circ (X,Y,t)=(X-\tilde  X,Y-\tilde  Y-(\tilde  t-t)\tilde  X,t-\tilde  t).
\end{equation}

The starting point for our analysis consists of a few observations rigorously discussed in the bulk of the paper concerning mean value-like formulas reflecting the family of dilations and translations underlying the operator $\K$. Let, in the following, $\Omega\subset\mathbb R^{M+1}$ be a domain, i.e., a connected open set, and assume that $u$ is a smooth function in $\Omega$.

The first observation is that the asymptotic mean value formula
\begin{equation}\label{meanvalue1bb}
u(X,Y,t)=\barint_{B_\epsilon(X)} \barint_{B_{\epsilon^3}(Y)} \barint_{t-\epsilon^2/(m+2)}^t u(\tilde X,\tilde Y-(\tilde t-t) X,\tilde t) \d \tilde X \d \tilde Y\d \tilde t+{o}(\epsilon^2),\mbox{ as }\epsilon\to 0,
\end{equation}
holds for all $(X,Y,t)\in\Omega$ if and only if
\begin{eqnarray}\label{solv1bb}
  \K u =0\mbox{ in }\Omega.
\end{eqnarray}
The proof of this fact is analogous to that of Lemma \ref{lem:mean_value_p=2} below, where we treat the closely related operator $\mathcal{K}_2$. The only difference between the results in the cases $\mathcal{K}$ and $\mathcal{K}_2$ is the length of the time interval over which the average is taken. Note that the coordinate $(\tilde X,\tilde Y-(\tilde t-t)X,\tilde t)$ in \eqref{meanvalue1bb} is dictated by the group law, see \eqref{e70} and \eqref{e70++}. As it turns out, the equivalence in \eqref{meanvalue1bb}-\eqref{solv1bb} is still true if the statement in \eqref{meanvalue1bb} is replaced by
\begin{equation*}
u(X,Y,t)=\barint_{B_\epsilon(X)} u\bigl(\tilde X,Y+\frac {\epsilon^2}{2(m+2)} \tilde X,t-\frac {\epsilon^2}{2(m+2)}\bigr ) \d \tilde X+{o}(\epsilon^2),\mbox{ as }\epsilon\to 0.
\end{equation*}
This is the content of Theorem \ref{meanvalue1thmgaian} in the case $p=2$. 

The second observation is that,  if $|\nabla_Xu(X,Y,t)|\neq 0$ whenever $(X,Y,t)\in\Omega$, and if the
asymptotic sup-inf (max-min) mean value formula
\begin{align}\label{meanvalue3}
u(X,Y,t)&=\frac 1 2\barint_{B_{\epsilon^3}(Y)} \barint_{t-\epsilon^2}^t \biggl\{\sup_{\tilde X\in{{B_\epsilon(X)}}}u(\tilde X,\tilde Y-(\tilde t-t)X,\tilde t)\biggr \}\d \tilde Y\d \tilde t\notag\\
&+\frac 1 2\barint_{B_{\epsilon^3}(Y)} \barint_{t-\epsilon^2}^t
\biggl\{\inf_{\tilde X\in{{B_\epsilon(X)}}}u(\tilde X,\tilde Y-(\tilde t-t)X,\tilde t)\biggr \}\d \tilde Y\d \tilde t\notag\\
&+{o}(\epsilon^2),\mbox{ as }\epsilon\to 0,
\end{align}
holds for all $(X,Y,t)\in\Omega$, then $u$ solves the partial differential equation
\begin{eqnarray}\label{solv1}
  \K_\infty u(X,Y,t):=\Delta_{\infty,X}^Nu(X,Y,t)+X\cdot\nabla_Yu(X,Y,t)-\partial_tu(X,Y,t)=0\mbox{ in }\Omega,
\end{eqnarray}
in the appropriate viscosity sense. As above, $\Delta_{\infty,X}^N$ is the so called (normalized) infinity Laplace operator in the $X$ variables only,
\begin{eqnarray*}
\Delta_{\infty,X}^Nu:=|\nabla_Xu|^{-2}\langle D_X^2u \nabla_X u,\nabla_X u\rangle=|\nabla_Xu|^{-2}\sum_{i,j=1}^m\partial_{x_ix_j}u\partial_{x_i}u\partial_{x_j}u.
\end{eqnarray*}
The same conclusion is true if \eqref{meanvalue3} is replaced by
\begin{align}\label{meanvalue4}
u(X,Y,t)&=\frac 1 2 \biggl\{\sup_{\tilde X\in{{B_\epsilon(X)}}}u(\tilde X,Y+{\epsilon^2}\tilde X/2,t-{\epsilon^2}/2)+\inf_{\tilde X\in{{B_\epsilon(X)}}}u(\tilde X,Y+{\epsilon^2}\tilde X/2,t-{\epsilon^2}/2)\biggr \}\notag\\
&+{o}(\epsilon^2),\mbox{ as }\epsilon\to 0.
\end{align}
This is the conclusion of Theorem \ref{meanvalue1thmgaian} in the case $p=\infty$. Note that \eqref{meanvalue3} and \eqref{meanvalue4} remain valid, as viscosity solutions are by definition continuous, with $\sup_{\tilde X\in{{B_\epsilon(X)}}}$ and $\inf_{\tilde X\in{{B_\epsilon(X)}}}$ replaced by
$\max_{\tilde X\in\overline{B_\epsilon(X)}}$ and $\min_{\tilde X\in\overline{B_\epsilon(X)}}$, respectively.

As we will see, one can  give a probabilistic interpretation of \eqref{meanvalue4} and the PDE in \eqref{solv1} in the context of a tug-of-war game which loosely can be defined as follows. Assume that the game starts at $(X,Y,t)\in\Omega$ and that $\epsilon>0$ is small. At each step of the game the two opponents flip a fair coin and the winner is allowed to pick a velocity direction $\eta$. The token then gets transported according to $$(X,Y)\to (\hat X,\hat Y),\  \hat X:=X+\epsilon\eta,\ \hat Y:= Y+{\epsilon^2}\hat X/2.$$

In addition, the two cases discussed, $\K u=0$ and  $\K_\infty u=0$, can be combined into tug-of-war games with noise and as a consequence, we are led, for $p\in (1,\infty)$ given, to consider the  Kolmogorov $p$-Laplacian type equation
\begin{align}\label{solv1+-}
  \K_p u(X,Y,t)&:=((p-2)\Delta_{\infty,X}^N+\Delta_X)u(X,Y,t)\notag\\
  &+(m+p)(X\cdot\nabla_Yu(X,Y,t)-\partial_tu(X,Y,t))=0.
\end{align}

To our knowledge the PDEs  $\K_\infty$ and $\K_p$ have previously not been discussed in the literature and therefore all classical questions concerning existence, uniqueness and regularity of solutions seem to be open problems. In particular, to develop the theory of these games and operators, the existence and uniqueness of viscosity solutions to the Dirichlet problem for the operator $\K_p$ $(\K_\infty)$ with continuous boundary data, and in potentially  velocity $(X)$, position $(Y)$ as well as time $(t)$-dependent domains $\Omega\subset\mathbb R^{M+1}$, are of fundamental importance. It is also important to study the limit of the (fair) value function of the  game as $\epsilon\to 0$ and its relation to the Dirichlet problem. Naturally, the Dirichlet problem is of independent interest, but, in this paper, we are particularly interested in this problem in the context of the tug-of-war game and, in the following, we will briefly discuss the additional complexity we encounter in our context in comparison to the corresponding parabolic problems studied in \cite{manfredipr10c}.  The additional complexity essentially stems from two facts.

First, in the setup outlined, the players can only modify  the velocity coordinate $(X)$ of the game process directly, while the position coordinate $(Y)$ of the game process is updated according to
$Y\to Y+{\epsilon^2}X/2$. In this sense, the position coordinate $(Y)$ is determined by velocity and time, and hence the players can only influence
the position coordinate indirectly.

Second,  already in the case of $\K$,  the analysis of the Dirichlet problem is  complicated  by the presence of characteristic points for the operator $\K$ on $\partial\Omega$. Indeed, let
$U_X\subset\mathbb R^m$ and $U_Y\subset\mathbb R^m$ be  bounded domains with say $C^2$-smooth boundaries. Given $T$, $0<T<\infty$,  let $I:=(0,T)\subset \mathbb R$. Considering product domains $\Omega=U_X\times U_Y\times I\subset \mathbb R^{M+1}$ we introduce
\begin{align*}
\partial_\K(U_X\times U_Y\times I):=\partial_1\cup\partial_2\cup\partial_3,
\end{align*}
where
\begin{align*}
\partial_1&:=\partial U_X\times \overline{U_Y}\times[0,T),\notag\\
\partial_2&:=\{(X,Y)\in \overline{U_X}\times\partial U_Y:\ X\cdot N_Y>0\}\times[0,T),\notag\\
\partial_3&:=(U_X\times U_Y)\times\{0\},
\end{align*}
and where $N_{Y}$ denotes the outer unit normal to $U_{Y}$ at $Y\in \partial U_Y$. $\partial_\K(U_X\times U_Y\times I)$ is sometimes referred to as the Kolmogorov boundary of $U_X\times U_Y\times I$, and the Kolmogorov boundary serves, already in the context of the operator $\K$, as the natural substitute for the parabolic boundary used in the context of the Cauchy-Dirichlet problem for uniformly parabolic equations. Given $F\in C(\mathbb R^{M+1})$, the  Dirichlet problem to study, see
\cite{Manfredini,NP} for instance, for $\K$ is the boundary value problem
\begin{align*}
\begin{cases}
\K u(X,Y,t)=0,\quad  &\textrm{for}\quad (X,Y,t)\in U_X\times U_Y\times I,\\
u(X,Y,t)=F(X,Y,t)
,\quad  &\textrm{for}\quad  (X,Y,t)\in \partial_\K(U_X\times U_Y\times I).
\end{cases}
\end{align*}
In particular, this means that no boundary data is imposed on the part of the topological boundary $\partial(U_X\times U_Y\times I)$ defined
by \begin{align}\label{state}
\partial_2^\ast:=\{(X,Y)\in U_X \times\partial U_Y:\ X\cdot N_Y\leq 0\}\times[0,T).
\end{align}

Put together, this implies that if we want the (fair) value function of the  game $(u_\epsilon)$ to converge to a (unique) viscosity solution to the Dirichlet problem for the operator $\K_p$ $(\K_\infty)$, with given boundary data, then the rules of the game have to take into account the fact that the state of the game may be at a point on the part of the boundary defined by $\partial_2^\ast$ and introduced in \eqref{state}. In particular, this means that we in some sense must restrict the directions, as we impose no boundary data on $\partial_2^\ast$, in which the players are allowed to modify the velocity coordinate of the game to ensure that the game process is pushed into $\Omega$ so that the game can be continued. This argument assumes that the game  can only end if the game exits
$\Omega=U_X\times U_Y\times I$ through $\partial_\K(U_X\times U_Y\times I)$. This complicates matters considerably as rules implying that, at instances, the players will only be allowed, when modifying $X$, to choose directions in a cone depending on $X$, $Y$, and $N_Y$, have to be introduced.

To complete the Tug-of-war with Kolmogorov program in all detail, we will, in the second part of the paper, assume $U_Y=\mathbb R^m$. I.e., we will impose no restriction on the position coordinate $Y$ while the pay-off function will depend on all variables. The Dirichlet problem for the operator $\K_p$ $(\K_\infty)$, and the modified tug-of-war games, in more general domains $\Omega=U_X\times U_Y\times I$ are targets for future research, see Section \ref{OP}. Moreover, we note that the probabilistic interpretation  of the PDE corresponding to $\K_p$ makes sense only when $\alpha$ and $\beta$ are nonnegative, that is, when $p\geq 2$.

The rest of the paper is organized as follows. Section \ref{sec1} is of preliminary nature and we here introduce notation and the correct viscosity formalism. Section \ref{sec2} is devoted to the proof of Theorem \ref{meanvalue1thm}, stating the connection between asymptotic mean value properties and solutions to $\K_p u=0$.
Motivated by the asymptotic mean value theorem (Theorem \ref{meanvalue1thm}), in Section \ref{sec3} we study functions satisfying the mean value property without the correction term ${o}(\epsilon^2)$.
To distinguish between our context and the notion of $(p,\epsilon)$-parabolic functions
introduced in \cite{manfredipr10c}, we call these
functions $(p,\epsilon)$-Kolmogorov functions.  In analogy with $(p,\epsilon)$-parabolic functions, $(p,\epsilon)$-Kolmogorov functions have interesting properties
to be studied in their own right. In Section \ref{sec3}, we prove that $(p,\epsilon)$-Kolmogorov functions are value functions of certain tug-of-war games with noise briefly discussed above. In Section \ref{sec4}, we let $\epsilon\to 0$ and we prove, in domains of the form $\Omega:=U_X\times U_Y\times I$, $U_Y=\mathbb R^m$, that the limiting function is the unique viscosity solution to the Dirichlet problem for the PDE introduced. In particular, in Section \ref{sec4}, existence and uniqueness of viscosity solutions to the Dirichlet problem for $\K_p$ is established in certain subsets of $\mathbb R^{M+1}$.  The analysis in Section \ref{sec3} and  Section \ref{sec4} is, as discussed and compared to \cite{manfredipr10c}, complicated by the underlying non-Euclidean Lie group connected to operators of Kolmogorov type, and by the fact that the very notion of parabolic boundary is already more complicated compared to the heat operator $\mathcal{H}$. Finally, in Section \ref{OP}, we state some open problems.

\section{Preliminaries}\label{sec1}

Given $(X,Y,t)\in \R^{M+1}$, we let \begin{equation*}
  \|(X,Y, t)\|:=|(X,Y)|\!+|t|^{\frac{1}{2}},\ |(X,Y)|= |X| + |Y|^\frac{1}{3}.
\end{equation*}
We recall the following pseudo-triangular
inequality: there exists a positive constant ${c}$ such that
\begin{eqnarray}\label{e-ps.tr.in}
 \quad\|(X,Y,t)^{-1}\|\le {c}  \| (X,Y,t) \|,\quad \|(X,Y,t)\circ (\tilde X,\tilde Y,\tilde t)\| \le  {c}  (\| (X,Y,t) \| + \| (\tilde X,\tilde Y,\tilde t)
\|),
\end{eqnarray}
whenever $(X,Y,t),(\tilde X,\tilde Y,\tilde t)\in \R^{M+1}$. Using \eqref{e-ps.tr.in}, it  follows directly that
\begin{equation} \label{e-triangularap}
    \|(\tilde X,\tilde Y,\tilde t)^{-1}\circ (X,Y,t)\|\le c \, \|(X,Y,t)^{-1}\circ (\tilde X,\tilde Y,\tilde t)\|,
\end{equation}
whenever $(X,Y,t),(\tilde X,\tilde Y,\tilde t)\in \R^{M+1}$. Let
\begin{equation}\label{e-ps.distint}
    d_\K((X,Y,t),(\tilde X,\tilde Y,\tilde t)):=\frac 1 2\bigl( \|(\tilde X,\tilde Y,\tilde t)^{-1}\circ (X,Y,t)\|+\|(X,Y,t)^{-1}\circ (\tilde X,\tilde Y,\tilde t)\|).
\end{equation}
Using \eqref{e-triangularap}, it follows that
\begin{equation}\label{e-ps.dist}
   \|(\tilde X,\tilde Y,\tilde t)^{-1}\circ (X,Y,t)\|\approx d_\K((X,Y,t),(\tilde X,\tilde Y,\tilde t))\approx \|(X,Y,t)^{-1}\circ (\tilde X,\tilde Y,\tilde t)\|
\end{equation}
for all $(X,Y,t),(\tilde X,\tilde Y,\tilde t)\in \R^{M+1}$ and with uniform constants. Again, using \eqref{e-ps.tr.in}, we also see that
\begin{equation*} 
    d_\K((X,Y,t),(\tilde X,\tilde Y,\tilde t))\le {c} \bigl(d_\K((X,Y,t),(\hat X,\hat Y,\hat t))+d_\K((\hat X,\hat Y,\hat
t),(\tilde X,\tilde Y,\tilde t))\bigr ),
\end{equation*}
whenever $(X,Y,t),(\hat X,\hat Y,\hat t),(\tilde X,\tilde Y,\tilde t)\in \R^{M+1}$, and hence $d_\K$ is a symmetric quasi-distance. Based on $d_\K$, we introduce the balls
\begin{equation*}
    \mathcal{B}_r(X,Y,t):= \{ (\tilde X,\tilde Y,\tilde t) \in\mathbb R^{M+1} \mid d_\K((\tilde X,\tilde Y,\tilde t),(X,Y,t)) <
r\},
\end{equation*}
for $(X,Y,t)\in \R^{M+1}$ and $r>0$.  The measure of the ball $\mathcal{B}_r(X,Y,t)$ is  $|\mathcal{B}_r(X,Y,t)|\approx r^{{\bf q}}$, independent of $(X,Y,t)$, where
$${\bf q}:=4m+2.$$

Let $D\subset\mathbb R^{M+1}$ be an open set. We denote by $\mbox{LSC}(D)$ the set of lower semicontinuous functions on $D$, i.e., all functions $f: D \to \R$ such that for all points $(\hat X,\hat Y,\hat t)\in D$ and for any sequence $\{(X_n, Y_n, t_n)\}_n$, $(X_n, Y_n, t_n)\in D$,   $(X_n,Y_n,t_n) \to (\hat X,\hat Y,\hat t)$ as $n \to \infty$ in $D$, we have
\begin{equation*}
\liminf _{n \to \infty} f(X_n,Y_n,t_n) \geq f(\hat X,\hat Y,\hat t).
\end{equation*}
We denote by
$\mbox{USC}(D)$ the set of upper semicontinuous functions on $D$, i.e., all functions $f: D \to \R$ such that for all points $(\hat X,\hat Y,\hat t)\in D$ and for any sequence $\{(X_n, Y_n, t_n)\}_n$, $(X_n, Y_n, t_n)\in D$,   $(X_n,Y_n,t_n) \to (\hat X,\hat Y,\hat t)$ as $n \to \infty$ in $D$, we have
\begin{equation*}
\limsup _{n \to \infty}f(X_n,Y_n,t_n) \leq f(\hat X,\hat Y,\hat t).
\end{equation*}
Note that a function $f\in \mbox{USC}(D)$  if and only if $-f\in \mbox{LSC}(D)$. Also, $f$ is  continuous on $D$, $f\in C(D)$, if and only if
$f\in \mbox{USC}(D)\cap \mbox{LSC}(D)$.

We will frequently use the elementary fact that if $D\subset\mathbb R^{M+1}$ is an open set, $\overline{B_\epsilon(X)}\times B_{\epsilon^3}(Y)\times
(t-\epsilon^2,t)\subset D$, and if $u\in C(D)$, then
\begin{align}\label{elementary}
\sup_{\tilde X\in{{B_\epsilon(X)}}}u(\tilde X,\tilde Y-(\tilde t-t)X,\tilde t)&=\max_{\tilde X\in\overline{B_\epsilon(X)}}u(\tilde X,\tilde Y-(\tilde t-t)X,\tilde t),\notag\\
\inf_{\tilde X\in{{B_\epsilon(X)}}}u(\tilde X,\tilde Y-(\tilde t-t)X,\tilde t)&=\min_{\tilde X\in\overline{B_\epsilon(X)}}u(\tilde X,\tilde Y-(\tilde t-t)X,\tilde t),
\end{align}
for every $(\tilde Y,\tilde t)\in  B_{\epsilon^3}(Y)\times
(t-\epsilon^2,t)$ fixed.

For a symmetric $m\times m$-matrix $A$, we denote its largest and smallest eigenvalue
by $\Lambda(A)$ and $\lambda(A)$, respectively, i.e.,
$$\Lambda(A)=\max_{|\eta|=1}(A\eta)\cdot\eta\mbox{ and }\lambda(A)=\min_{|\eta|=1}(A\eta)\cdot\eta.$$

We now give the definition of the super- and subjets used in the proof of Lemma \ref{Vissolsimp} below. We only state the definitions for interior points of the domain, as this is the concept we need.
\begin{definition} \label{def:superjet} Let $D\subset\mathbb R^n$ be an open set. Let $v\in \mbox{USC}(D)$ and $u\in \mbox{LSC}(D)$, $\hat x \in D$, and let $S_n$ be the set of all $n\times n$-dimensional symmetric matrices. The superjet ${J}^{2,+}v(\hat x)$ of $v$ at $\hat x$ is the set of all pairs $(p,A) \in \mathbb R^n\times S_n$ such that
\begin{align*}
v(x) &\leq v(\hat x) + \langle p, x - \hat x\rangle + \frac 1  2\langle A(x - \hat x), x - \hat x\rangle + o(|x - \hat x|^2).
\end{align*}
The subjet ${J}^{2,-}u(\hat x)$ of $u$ at $\hat x$ is the set of all pairs $(p,A) \in \mathbb R^n\times S_n$ such that
\begin{align*}
v(x) &\geq v(\hat x) + \langle p, x - \hat x\rangle + \frac 1  2\langle A(x - \hat x), x - \hat x\rangle + o(|x - \hat x|^2).
\end{align*}
We say that the pair $(p,A) \in \mathbb R^n\times S_n$ belongs to $\ol{J}^{2,+}v(\hat x)$ if there is a sequence $x_j \to \hat x$ such that $v(x_j) \to v(\hat x)$ and there are $(p_j, A_j) \in {J}^{2,+}v(\hat x_j)$ such that $(p_j, A_j) \to (p, A)$. The set $\ol{J}^{2,-}u(\hat x)$ is defined in a corresponding manner.
\end{definition}

\subsection{Viscosity solutions}
In the following, we introduce the notion of viscosity solutions used throughout the paper. To be clear and explicit,
we state the definitions for the case $p=\infty$ and the equation in \eqref{solv1}, and for the case $1<p<\infty$ and the equation in \eqref{solv1+-}, separately. Given $\phi\in C^2(D)$ and $(X, Y,t)\in D$, we let $\nabla_X^2 \phi(X, Y,t)$ denote the Hessian of the map $X \mapsto \phi(X,Y,t)$.

\begin{definition}\label{Vissolagain} Let $D\subset\mathbb R^{M+1}$ be an open set. A function $u\in \mbox{LSC}(D)$ is
a viscosity supersolution to the equation in \eqref{solv1} in $D$ if, whenever $(\hat X,\hat Y,\hat t)\in D$ and $\phi\in C^2(D)$ are such that
\begin{eqnarray*}
(1)&& u(\hat X,\hat Y,\hat t)=\phi(\hat X,\hat Y,\hat t),\notag\\
(2)&& u(X,Y,t)>\phi(X,Y,t)\mbox{ for all }(X,Y,t)\in D,\ (X,Y,t)\neq (\hat X,\hat Y,\hat t),
\end{eqnarray*}
then, at $(\hat X,\hat Y,\hat t)$,
\begin{eqnarray*}
(i)&& (\partial_t\phi-\hat X\cdot\nabla_Y\phi))\geq \Delta_{\infty,X}^N\phi,\mbox{ if $\nabla_X\phi(\hat X,\hat Y,\hat t)\neq 0$},\notag\\
(ii)&& (\partial_t\phi-\hat X\cdot\nabla_Y\phi))\geq \lambda(\nabla_X^2\phi),\mbox{ if $\nabla_X \phi(\hat X,\hat Y,\hat t)=0$}.
\end{eqnarray*}
A function $u\in \mbox{USC}(D)$ is
a viscosity subsolution to the equation in \eqref{solv1} in $D$ if, whenever \\
$(\hat X,\hat Y,\hat t)\in D$ and $\phi\in C^2(D)$ are such that
\begin{eqnarray*}
(1)&& u(\hat X,\hat Y,\hat t)=\phi(\hat X,\hat Y,\hat t),\notag\\
(2)&& u(X,Y,t)<\phi(X,Y,t)\mbox{ for all }(X,Y,t)\in D,\ (X,Y,t)\neq (\hat X,\hat Y,\hat t),
\end{eqnarray*}
then,  at $(\hat X,\hat Y,\hat t)$,
\begin{eqnarray*}
(i)&& (\partial_t\phi-\hat X\cdot\nabla_Y\phi))\leq \Delta_{\infty,X}^N\phi,\mbox{ if $\nabla_X \phi(\hat X,\hat Y,\hat t)\neq 0$},\notag\\
(ii)&& (\partial_t\phi-\hat X\cdot\nabla_Y\phi))\leq \Lambda(\nabla^2_X\phi),\mbox{ if $\nabla_X\phi(\hat X,\hat Y,\hat t)=0$}.
\end{eqnarray*}
A function $u\in C(D)$ is said to be a viscosity solution to
\eqref{solv1} in $D$ if it is both a viscosity supersolution and a viscosity subsolution in $D$.
\end{definition}

\begin{definition}\label{Vissol} Let $D\subset\mathbb R^{M+1}$ be an open set and consider $p$, $1<p<\infty$. A function $u\in \mbox{LSC}(D)$ is
a viscosity supersolution to the equation in \eqref{solv1+-} in $D$ if, whenever $(\hat X,\hat Y,\hat t)\in D$ and $\phi\in C^2(D)$ are such that
\begin{eqnarray*}
(1)&& u(\hat X,\hat Y,\hat t)=\phi(\hat X,\hat Y,\hat t),\notag\\
(2)&& u(X,Y,t)>\phi(X,Y,t)\mbox{ for all }(X,Y,t)\in D,\ (X,Y,t)\neq (\hat X,\hat Y,\hat t),
\end{eqnarray*}
then, at $(\hat X,\hat Y,\hat t)$,
\begin{eqnarray*}
(i)&& (m+p)(\partial_t\phi-\hat X\cdot\nabla_Y\phi))\geq ((p-2)\Delta_{\infty,X}^N+\Delta_X)\phi,\mbox{ if $\nabla_X\phi(\hat X,\hat Y,\hat t)\neq 0$},\notag\\
(ii)&& (m+p)(\partial_t\phi-\hat X\cdot\nabla_Y\phi))\geq \lambda((p-2)\nabla_X^2\phi)+\Delta_X\phi,\mbox{ if $\nabla_X \phi(\hat X,\hat Y,\hat t)=0$}.
\end{eqnarray*}
A function $u\in \mbox{USC}(D)$ is
a viscosity subsolution to the equation in \eqref{solv1+-} in $D$ if, whenever $(\hat X,\hat Y,\hat t)\in D$ and $\phi\in C^2(D)$ are such that
\begin{eqnarray*}
(1)&& u(\hat X,\hat Y,\hat t)=\phi(\hat X,\hat Y,\hat t),\notag\\
(2)&& u(X,Y,t)<\phi(X,Y,t)\mbox{ for all }(X,Y,t)\in D,\ (X,Y,t)\neq (\hat X,\hat Y,\hat t),
\end{eqnarray*}
then,  at $(\hat X,\hat Y,\hat t)$,
\begin{eqnarray*}
(i)&& (m+p)(\partial_t\phi-\hat X\cdot\nabla_Y\phi))\leq ((p-2)\Delta_{\infty,X}^N+\Delta_X)\phi,\mbox{ if $\nabla_X \phi(\hat X,\hat Y,\hat t)\neq 0$},\notag\\
(ii)&& (m+p)(\partial_t\phi-\hat X\cdot\nabla_Y\phi))\leq \Lambda((p-2)\nabla^2_X\phi)+\Delta_X\phi,\mbox{ if $\nabla_X\phi(\hat X,\hat Y,\hat t)=0$}.
\end{eqnarray*}
A function $u\in C(D)$ is said to be a viscosity solution to
\eqref{solv1+-} in $D$ if it is both a viscosity supersolution and a viscosity subsolution in $D$.
\end{definition}

We will, at instances, find it convenient to use the following lemma which states that we can further reduce the number of test in the definition of viscosity solutions.

\begin{lemma}\label{Vissolsimp} Let $D\subset\mathbb R^{M+1}$ be an open set and consider $p$, $1 < p < \infty$. A function $u\in C(D)$ is
a viscosity solution to \eqref{solv1+-} in $D$ if the following two conditions hold.

If $(\hat X,\hat Y,\hat t)\in D$ and $\phi\in C^2(D)$ are such that
\begin{eqnarray*}
(1)&& u(\hat X,\hat Y,\hat t)=\phi(\hat X,\hat Y,\hat t),\notag\\
(2)&& u(X,Y,t)>\phi(X,Y,t)\mbox{ for all }(X,Y,t)\in D,\ (X,Y,t)\neq (\hat X,\hat Y,\hat t),
\end{eqnarray*}
then, at $(\hat X,\hat Y,\hat t)$
\begin{eqnarray*}
(i)&& (m+p)(\partial_t\phi-\hat X\cdot\nabla_Y\phi))\geq ((p-2)\Delta_{\infty,X}^N+\Delta_X)\phi,\mbox{ if $\nabla_X\phi(\hat X,\hat Y,\hat t)\neq 0$},\notag\\
(ii)&& (m+p)(\partial_t\phi-\hat X\cdot\nabla_Y\phi))\geq 0,\mbox{ if $\nabla_X \phi(\hat X,\hat Y,\hat t)=0$ and $\nabla_X^2 \phi(\hat X,\hat Y,\hat t)=0$}.
\end{eqnarray*}

If $(\hat X,\hat Y, \hat t)\in D$ and $\phi\in C^2(D)$ are such that
\begin{eqnarray*}
(1)&& u(\hat X,\hat Y,\hat t)=\phi(\hat X,\hat Y,\hat t),\notag\\
(2)&& u(X,Y,t)<\phi(X,Y,t)\mbox{ for all }(X,Y,t)\in D,\ (X,Y,t)\neq (\hat X,\hat Y,\hat t),
\end{eqnarray*}
then,  at $(\hat X,\hat Y,\hat t)$,
\begin{eqnarray*}
(i)&& (m+p)(\partial_t\phi-\hat X\cdot\nabla_Y\phi))\leq ((p-2)\Delta_{\infty,X}^N+\Delta_X)\phi,\mbox{ if $\nabla_X \phi(\hat X,\hat Y,\hat t)\neq 0$},\notag\\
(ii)&& (m+p)(\partial_t\phi-\hat X\cdot\nabla_Y\phi))\leq 0,\mbox{ if $\nabla_X\phi(\hat X,\hat Y,\hat t)=0$ and $\nabla_X^2 \phi(\hat X,\hat Y,\hat t)=0$}.
\end{eqnarray*}
The analogous conclusions are valid in the case $p=\infty$.
\end{lemma}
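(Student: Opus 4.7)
The plan is to prove the lemma by showing that the two simplified conditions imply the corresponding conditions of Definitions \ref{Vissolagain} and \ref{Vissol}. The reverse implication is immediate, as the simplified conditions are more restrictive on the admissible test functions. Clause (i) of the simplified condition coincides with clause (i) of the original definition, so no work is required there. When additionally $\nabla_X^2\phi(\hat X,\hat Y,\hat t)=0$, the simplified clause (ii) yields precisely the original clause (ii), since $\lambda((p-2)\cdot 0)+\tr(0)=0$ (and similarly $\lambda(0)=0$ in the $p=\infty$ case). The crux of the proof is therefore to establish
\[
(m+p)\bigl(\partial_t\phi-\hat X\cdot\nabla_Y\phi\bigr)(\hat X,\hat Y,\hat t)\geq\lambda((p-2)A)+\tr(A),
\]
for any strict test $\phi$ from below with $\nabla_X\phi(\hat X,\hat Y,\hat t)=0$ and $A:=\nabla_X^2\phi(\hat X,\hat Y,\hat t)\neq 0$, using only the two simplified assumptions.

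To achieve this I will use a linear perturbation in the velocity variable that breaks the degeneracy of the $X$-gradient. Fix a unit vector $e\in\mathbb R^m$ and, for small $\epsilon>0$, set
\[
\tilde\phi_\epsilon(X,Y,t):=\phi(X,Y,t)+\epsilon\langle e,X-\hat X\rangle.
\]
Since $u-\phi$ has a strict minimum at $(\hat X,\hat Y,\hat t)$, the perturbed function $u-\tilde\phi_\epsilon$ attains a minimum on a fixed small closed ball around $(\hat X,\hat Y,\hat t)$ at some $z_\epsilon=(X_\epsilon,Y_\epsilon,t_\epsilon)$, and $z_\epsilon\to(\hat X,\hat Y,\hat t)$ as $\epsilon\to 0$. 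Shifting $\tilde\phi_\epsilon$ by the constant $c_\epsilon=(u-\tilde\phi_\epsilon)(z_\epsilon)$ and then subtracting a standard fourth-order bump $\kappa|z-z_\epsilon|^4$ localized near $z_\epsilon$ produces a $C^2$ test function $\psi_\epsilon$ that touches $u$ from below strictly on $D$ and whose first and second derivatives at $z_\epsilon$ coincide with those of $\tilde\phi_\epsilon$. For a suitable (generic) choice of $e$ and along a suitable sequence $\epsilon_n\downarrow 0$, the vectors $q_{\epsilon_n}:=\nabla_X\psi_{\epsilon_n}(z_{\epsilon_n})=\nabla_X\phi(z_{\epsilon_n})+\epsilon_n e$ are nonzero, and the simplified clause (i) applied at $z_{\epsilon_n}$ yields
\[
(m+p)\bigl(\partial_t\psi_{\epsilon_n}-X_{\epsilon_n}\cdot\nabla_Y\psi_{\epsilon_n}\bigr)(z_{\epsilon_n})\geq (p-2)\frac{\langle A_{\epsilon_n}q_{\epsilon_n},q_{\epsilon_n}\rangle}{|q_{\epsilon_n}|^2}+\tr(A_{\epsilon_n}),
\]
where $A_{\epsilon_n}:=\nabla_X^2\phi(z_{\epsilon_n})$.

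The final step is to pass to the $\liminf$ as $n\to\infty$. By continuity, the left-hand side converges to $(m+p)(\partial_t\phi-\hat X\cdot\nabla_Y\phi)(\hat X,\hat Y,\hat t)$, and $\tr(A_{\epsilon_n})\to\tr(A)$. The unit vectors $\hat q_{\epsilon_n}:=q_{\epsilon_n}/|q_{\epsilon_n}|$ live in the compact sphere $S^{m-1}$, so after extracting a further subsequence $\hat q_{\epsilon_n}\to\hat q$, and then $(p-2)\langle A_{\epsilon_n}\hat q_{\epsilon_n},\hat q_{\epsilon_n}\rangle\to (p-2)\langle A\hat q,\hat q\rangle\geq\inf_{|v|=1}(p-2)\langle Av,v\rangle=\lambda((p-2)A)$, which produces the required supersolution inequality. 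The subsolution case is entirely symmetric, with \emph{inf} and $\lambda$ replaced by \emph{sup} and $\Lambda$, and the $p=\infty$ statement follows from the same perturbation applied to Definition \ref{Vissolagain}. The hardest point, and essentially the only nontrivial one, is the step guaranteeing that the sequence $(q_{\epsilon_n})$ can be kept away from zero, which is handled by choosing $e$ outside the limit set of the normalized gradients $-\nabla_X\phi(z_\epsilon)/\epsilon$; the rest is bookkeeping and standard viscosity manipulations.
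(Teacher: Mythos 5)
Your overall strategy — perturbing $\phi$ by $\epsilon\langle e,X-\hat X\rangle$, moving to the shifted minimizer $z_\epsilon$ of $u-\tilde\phi_\epsilon$, applying clause (i) there, and passing to a $\liminf$ — is a natural and genuinely different route from the one the paper takes. The passage to the limit is fine (including the observation that $(p-2)\langle A\hat q,\hat q\rangle\ge\inf_{|v|=1}(p-2)\langle Av,v\rangle=\lambda((p-2)A)$ for any unit vector $\hat q$, valid for every $p$), and the "completion of the square" by a quartic bump to recover strict touching is standard. The problem is exactly the step you flag as the hardest one, and it is a genuine gap rather than a loose end. The minimizer $z_\epsilon$ already depends on $e$ (it minimizes $u-\phi-\epsilon\langle e,\cdot-\hat X\rangle$), so "choosing $e$ outside the limit set of $-\nabla_X\phi(z_\epsilon)/\epsilon$" asks you to choose $e$ outside a set that is itself a function of $e$ — and in fact, whenever $q_\epsilon(e)=0$ one has $-\nabla_X\phi(z_\epsilon(e))/\epsilon=e$, so the limit set for direction $e$ \emph{contains} $e$ and the prescription is vacuous. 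Worse, the pathology is not hypothetical and no choice of $e$ can avoid it in general: take $u\equiv 0$ and
\[
\phi(X,Y,t)=-\tfrac12|X-\hat X|^2-|Y-\hat Y|^2-(t-\hat t)^2,
\]
which touches $u$ from below strictly at $\hat z$ with $\nabla_X\phi(\hat z)=0$ and $\nabla_X^2\phi(\hat z)=-I\neq 0$. For \emph{every} unit vector $e$ and \emph{every} $\epsilon>0$ the unique minimizer of $u-\tilde\phi_\epsilon$ is $z_\epsilon=(\hat X+\epsilon e,\hat Y,\hat t)$, and there $\nabla_X\tilde\phi_\epsilon(z_\epsilon)=-(X_\epsilon-\hat X)+\epsilon e=0$. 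Clause (i) of the simplified lemma is never applicable, clause (ii') requires $\nabla_X^2\phi=0$, and your machinery produces no usable inequality for this $\phi$, even though the desired supersolution inequality $0\ge\lambda(-(p-2)I)-m=2-p-m$ is of course true. Thus the argument does not close.

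The paper sidesteps this obstruction with the doubling-of-variables/theorem-on-sums technique: it minimizes
\[
w_j(z,\tilde z)=u(z)-\phi(\tilde z)+\tfrac{j^4}{4}|X-\tilde X|^4+\tfrac{j^4}{4}|Y-\tilde Y|^4+\tfrac{j}{2}|t-\tilde t|^2
\]
and at the doubled minimizer $P_j$ the $X$-gradient of the penalty is explicit, namely $j^4|X_j-\tilde X_j|^2(X_j-\tilde X_j)$, so it vanishes \emph{if and only if} $X_j=\tilde X_j$. In that case the $X$-Hessian of the penalty is also zero and clause (ii') applies directly; otherwise the gradient is nonzero, the Crandall--Ishii theorem on sums supplies matrices with the right ordering, and clause (i) is used. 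That dichotomy — with a penalty whose gradient you can see and control — is precisely what the simple linear perturbation in $X$ does not give you, which is why the Crandall--Ishii apparatus is needed here. If you want to rescue a perturbation-style proof you would need a penalty with the same "gradient controls the case split" feature, which is essentially what the quartic doubling penalty is.
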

\begin{proof} We only supply the proof of the lemma in the case $1<p<\infty$, as the proof in the case $p=\infty$ is analogous. First, we focus on the case $p\geq 2$ and, at the end of the proof, we explain  how the argument can be modified to work also in the case $p<2$. Assume that $u\in C(D)$ is such that the conditions stated in the lemma are true but that $u$ is not a viscosity solution to \eqref{solv1+-} in $D$ in the sense of Definition \ref{Vissol}.  Based on this assumption, we want to derive a contradiction.  Note that the only difference between the conditions in Lemma \ref{Vissolsimp} and the conditions in Definition \ref{Vissol} appears in $(ii)$ of Definition \ref{Vissol}.  As the conditions stated in the lemma are symmetric with respect to the test function touching from above and below, we can in the following assume, without loss of generality, that there exists $(\hat X,\hat Y,\hat t)\in D$, $\phi\in C^2(D)$, and $\eta>0$, such that
\begin{eqnarray*}
(1)&& u(\hat X,\hat Y,\hat t)=\phi(\hat X,\hat Y,\hat t),\notag\\
(2)&& u(X,Y,t)>\phi(X,Y,t)\mbox{ for all }(X,Y,t)\in D,\ (X,Y,t)\neq (\hat X,\hat Y,\hat t),
\end{eqnarray*}
 such that $\nabla_X\phi(\hat X,\hat Y,\hat t)=0$, $\nabla_X^2 \phi(\hat X,\hat Y,\hat t)\neq 0$, and
\begin{eqnarray}\label{contra1}
\quad (m+p)(\partial_t-\hat X\cdot\nabla_Y\phi)(\hat X,\hat Y,\hat t)< \lambda((p-2) \nabla_X^2\phi(\hat X,\hat Y,\hat t))+\Delta_X\phi(\hat X,\hat Y,\hat t)-\eta.
\end{eqnarray}
This is a consequence of Definition \ref{Vissol} and the assumption that $u$ is not a viscosity solution to \eqref{solv1+-} in $D$. We want to prove that this is impossible by deriving a contradiction based on this assumption.

Let $U$ be an open set containing $(\hat X, \hat Y, \hat t)$ so that $\bar U$ is compact and contained in $D$. Given
$(X,Y,t)$ and $(\tilde X,\tilde Y,\tilde t)$, we introduce the function $w_j :\bar U \times \bar U \to \R$,
\begin{align*}
w_j(X,Y,t,\tilde X,\tilde Y,\tilde t)&:=u(X,Y,t)-\phi(\tilde X,\tilde Y,\tilde t)\notag\\
&+\bigl(\frac {j^4}{4}|X-\tilde X|^4+\frac {j^4}{4}|Y-\tilde Y|^4+\frac j2|t-\tilde t|^2\bigr ),
\end{align*}
and we pick a point $(X_j,Y_j,t_j,\tilde X_j,\tilde Y_j,\tilde t_j)$ in $\bar U\times\bar U$ at which the minimum of $w_j$ is realized. As in Proposition 3.7 in \cite{crandallil92}, we then see that
\begin{eqnarray}\label{contra4-}
\frac {j^4}{4}|X_j-\tilde X_j|^4\rightarrow 0,\ \frac {j^4}{4}|Y_j-\tilde Y_j|^4\rightarrow 0,\mbox{ and }\frac j2|t_j-\tilde t_j|^2\rightarrow 0.
\end{eqnarray}
We claim that both $(X_j,Y_j,t_j)$ and $(\tilde X_j, \tilde Y_j, \tilde t_j)$ converge to $(\hat X, \hat Y, \hat t)$. To see this, suppose on the contrary that there is $r>0$ such that for example
\begin{align}\label{point_outside_ball}
 (X_j, Y_j, t_j) \notin B_r(\hat X, \hat Y, \hat t)
\end{align}
for arbitrarily large $j$. Here, $B_r(\hat X, \hat Y, \hat t)$ is the (standard) Euclidean ball of radius $r$ centered at $(\hat X, \hat Y, \hat t)$. Since $u-\phi$ vanishes at $(\hat X, \hat Y, \hat t)$ and since this is a strict minimum, there is $\varepsilon > 0$ such that $u-\phi > 2\varepsilon$ on $\complement B_r(\hat X, \hat Y, \hat t)$. By \eqref{contra4-}, the distance between $(X_j, Y_j, t_j)$ and $(\tilde X_j, \tilde Y_j, \tilde t_j)$ vanishes in the limit $j\to \infty$, so by the uniform continuity of $\phi$ in $\bar U$, we have
\begin{align}\label{phi_difference}
 |\phi(X_j,Y_j,t_j)-\phi(\tilde X_j, \tilde Y_j, \tilde t_j)| < \varepsilon,
\end{align}
for all large $j$. Thus, for arbitrarily large $j$ satisfying \eqref{point_outside_ball} and \eqref{phi_difference}, we have
\begin{align*}
 w_j(X_j,Y_j,t_j,\tilde X_j, \tilde Y_j, \tilde t_j) &\geq u(X_j, Y_j, t_j) -\phi(\tilde X_j, \tilde Y_j,\tilde t_j)
 \\
 &= u(X_j, Y_j, t_j) - \phi(X_j, Y_j, t_j) + \phi(X_j, Y_j, t_j) - \phi(\tilde X_j, \tilde Y_j,\tilde t_j)
 \\
 &> 2\varepsilon - \varepsilon
 \\
 &= \varepsilon.
\end{align*}
But this contradicts the definition of $(X_j,Y_j,t_j,\tilde X_j,\tilde Y_j,\tilde t_j)$ as the infimum of $w_j$ on $\bar U \times \bar U$: the infimum cannot be positive since $w_j$ vanishes at $(\hat X, \hat Y, \hat t, \hat X, \hat Y, \hat t)$. Similarly, one can treat the sequence $(\tilde X_j, \tilde Y_j, \tilde t_j)$, and we have
\begin{eqnarray*}
(X_j,Y_j,t_j,\tilde X_j,\tilde Y_j,\tilde t_j)\to (\hat X,\hat Y,\hat t,\hat X,\hat Y,\hat t)\mbox{ as $j\to\infty$}.
\end{eqnarray*}

Assume that $X_{j_l}=\tilde X_{j_l}$ for an infinite sequence $\{j_l\}_l$ with $j_l\geq j_0$. Let
\begin{eqnarray*}
\quad\varphi_{j_l}(\tilde X,\tilde Y,\tilde t):=\frac {{j_l^4}}{4}|X_{j_l}-\tilde X|^4+\frac {{j_l^4}}{4}|Y_{j_l}-\tilde Y|^4+\frac {j_l}2|t_{j_l}-\tilde t|^2.
\end{eqnarray*}
As
\begin{align*}
w_{j_l}(X_{j_l},Y_{j_l},t_{j_l},\tilde X,\tilde Y,\tilde t)&=u(X_{j_l},Y_{j_l},t_{j_l})-(\phi(\tilde X,\tilde Y,\tilde t)-\varphi_{j_l}(\tilde X,\tilde Y,\tilde t)),
\end{align*}
it follows by the definition of $(X_{j_l},Y_{j_l},t_{j_l},\tilde X_{j_l},\tilde Y_{j_l},\tilde t_{j_l})$ that
\begin{eqnarray*}
\phi(\tilde X,\tilde Y,\tilde t)-\varphi_{j_l}(\tilde X,\tilde Y,\tilde t)
\end{eqnarray*}
has a local maximum at $(\tilde X_{j_l},\tilde Y_{j_l},\tilde t_{j_l})$. Using \eqref{contra1} and continuity of the map
\begin{eqnarray*}
(\tilde X,\tilde Y,\tilde t)\to \lambda((p-2) \nabla_X^2\phi(\tilde X,\tilde Y,\tilde t))+\Delta_X\phi(\tilde X,\tilde Y,\tilde t),
\end{eqnarray*}
we deduce that
\begin{eqnarray}\label{contra7}
&&(m+p)(\partial_t-\tilde X_{j_l}\cdot\nabla_Y)\phi(\tilde X_{j_l},\tilde Y_{j_l},\tilde t_{j_l})\notag\\
&&< \lambda((p-2) \nabla_X^2 \phi(\tilde X_{j_l},\tilde Y_{j_l},\tilde t_{j_l}))+\Delta_X\phi(\tilde X_{j_l},\tilde Y_{j_l},\tilde t_{j_l})-\eta,
\end{eqnarray}
for all ${j_l}\geq {j_l}_0$. Using the definition of $(\tilde X_{j_l},\tilde Y_{j_l},\tilde t_{j_l})$ as a local maximum of $\phi-\varphi_{j_l}$, we have that
\begin{eqnarray*}
(\partial_t-\tilde X_{j_l}\cdot\nabla_Y)\phi(\tilde X_{j_l},\tilde Y_{j_l},\tilde t_{j_l})=(\partial_t-\tilde X_{j_l}\cdot\nabla_Y)\varphi_{j_l}(\tilde X_{j_l},\tilde Y_{j_l},\tilde t_{j_l})
\end{eqnarray*}
and that $\nabla_X^2\phi(\tilde X_{j_l},\tilde Y_{j_l},\tilde t_{j_l})\leq \nabla_X^2\varphi_{j_l}(\tilde X_{j_l},\tilde Y_{j_l},\tilde t_{j_l})$ if ${j_l}\geq {j_l}_0$. Using these observations and \eqref{contra7} and recalling that we consider the case $p\geq 2$, we deduce
\begin{align}\label{contra9}
\eta &< -(m+p)(\partial_t - \tilde X_{j_l} \cdot \nabla_Y) \varphi_{j_l}(\tilde X_{j_l},\tilde Y_{j_l},\tilde t_{j_l}) \notag
\\
&+ (p-2)\lambda( \nabla_X^2 \varphi_{j_l}(\tilde X_{j_l},\tilde Y_{j_l},\tilde t_{j_l}))+\Delta_X\varphi_{j_l}(\tilde X_{j_l},\tilde Y_{j_l},\tilde t_{j_l})\notag
\\
=&-(m+p)(\partial_t-\tilde X_{j_l}\cdot\nabla_Y)\varphi_{j_l}(\tilde X_{j_l},\tilde Y_{j_l},\tilde t_{j_l}),
\end{align}
where we have also used that $X_{j_l}=\tilde X_{j_l}$ and hence that $\nabla_X^2\varphi_{j_l}(\tilde X_{j_l},\tilde Y_{j_l},\tilde t_{j_l})=0$ by construction. Next, we let
\begin{eqnarray*}
\quad\psi_{j_l}(X,Y,t):=-\frac {{j_l^4}}{4}|X-\tilde X_{j_l}|^4-\frac {{j_l^4}}{4}|Y-\tilde Y_{j_l}|^4-\frac {j_l}2|t-\tilde t_{j_l}|^2.
\end{eqnarray*}
Then,
\begin{eqnarray*}
u(X,Y,t)-\psi_{j_l}(X,Y,t)
\end{eqnarray*}
has a local minimum at $(X_{j_l},Y_{j_l}, t_{j_l})$. In this case, we deduce, using that $\nabla_X^2\psi_{j_l}(X_{j_l},Y_{j_l},t_{j_l})=0$ and condition $(ii)$ of the lemma in the case of touching from below,
\begin{align}\label{contra9+}
0\leq (m+p)(\partial_t-X_{j_l}\cdot\nabla_Y)\psi_{j_l}(X_{j_l},Y_{j_l},t_{j_l}).
\end{align}
Therefore, summing \eqref{contra9} and \eqref{contra9+},
\begin{align}\label{contra10}
\eta&< -(m+p)(\partial_t-\tilde X_{j_l}\cdot\nabla_Y)\varphi_{j_l}(\tilde X_{j_l},\tilde Y_{j_l},\tilde t_{j_l})+(m+p)(\partial_t-X_{j_l}\cdot\nabla_Y)\psi_{j_l}(X_{j_l},Y_{j_l},t_{j_l})\notag\\
&={{j_l^4}}(\tilde X_{j_l}-X_{j_l})\cdot (\tilde Y_{j_l}-Y_{j_l})|\tilde Y_{j_l}-Y_{j_l}|^2
\\
\notag &=0,
\end{align}
since $\tilde X_{j_l} = X_{j_l}$.
Thus, \eqref{contra10} produces a contradiction and therefore either our original assumption must be incorrect, and then we are done, or $X_j\neq \tilde X_j$ for all $j\geq j_0$ and for some $j_0\gg 1$.

Based on the previous argument, we from now on assume that $X_j\neq \tilde X_j$ for all $j\geq j_0$. Denote
\begin{align*}
 \Psi(X,Y,t,\tilde X, \tilde Y, \tilde t) = \frac{j^4}{4}|X-\tilde X|^4 + \frac{j^4}{4}|Y - \tilde Y|^4 + \frac{j}{2}|t-\tilde t|^2,
\end{align*}
and recall that
\begin{align*}
 w_j(X,Y,t,\tilde X, \tilde Y, \tilde t) = u(X,Y,t) - \phi(\tilde X, \tilde Y, \tilde t) + \Psi(X,Y,t,\tilde X, \tilde Y, \tilde t)
\end{align*}
has a local minimum at $P_j := (X_j,Y_j,t_j,\tilde X_j,\tilde Y_j, \tilde t_j)$.  Since the map
\begin{align*}
 (\tilde X, \tilde Y, \tilde t) \mapsto w_j(X_j, Y_j, t_j, \tilde X, \tilde Y, \tilde t)
\end{align*}
has a minimum at $(\tilde X_j, \tilde Y_j, \tilde t_j)$, we obtain some useful relations between the derivatives of $\phi$ and $\Psi$ at the point $P_j$:
\begin{align*}
 (\partial_t \phi)(\tilde X_j, \tilde Y_j, \tilde t_j) &= \partial_{\tilde t} \Psi(P_j) = - \partial_t \Psi(P_j),
 \\
  (\nabla_X \phi)(\tilde X_j, \tilde Y_j, \tilde t_j) &= \nabla_{\tilde X} \Psi(P_j) = - \nabla_X\Psi(P_j),
 \\
 (\nabla_Y \phi)(\tilde X_j, \tilde Y_j, \tilde t_j) &= \nabla_{\tilde Y} \Psi(P_j) = - \nabla_Y\Psi(P_j).
\end{align*}

We now apply Theorem 3.2 in \cite{crandallil92} with the choices $w=-w_j$, $k=2$, $u_1 = -u$, $u_2 = \phi$, $\hat x = P_j$. Our function $\Psi$ corresponds to the function $\phi$ in Theorem 3.2 in \cite{crandallil92}. The conclusion is that for any $\varepsilon > 0$ we can find symmetric $(M+1)\times(M+1)$ matrices $E, H$ such that
\begin{align*}
  (-\nabla_{\tilde X,\tilde Y,\tilde t}\Psi(P_j), H) = (\nabla_{X,Y,t}\Psi(P_j), H) &\in \overline{J}^{2,+}(-u)(X_j,Y_j,t_j) = -\overline{J}^{2,-}u(X_j,Y_j,t_j),
 \\
 (\nabla_{\tilde X, \tilde Y, \tilde t}\Psi(P_j), E) &\in \overline{J}^{2,+} \phi(\tilde X_j, \tilde Y_j, \tilde t_j),
\end{align*}
and
\begin{align}\label{matrix_estimate_thm_of_sums}
 \begin{pmatrix}
H & 0\\
0 & E
\end{pmatrix}
\leq A + \varepsilon A^2,
\end{align}
where $A = \nabla^2\Psi(P_j)=\nabla_{X,Y,t}^2\Psi(P_j)$. As we shall see, the choice of $\varepsilon$ in our case is not important. Denoting $M_1 = \nabla_X^2\Psi(P_j)$ and $M_2=\nabla_Y^2\Psi(P_j)$, a direct calculation shows that
\begin{align*}
 \nabla^2\Psi(P_j) =  \begin{pmatrix}
M_1 & 0 & 0 & -M_1 & 0 & 0 \\
0 & M_2 & 0 & 0 & -M_2 & 0 \\
0 & 0 & j & 0 & 0 & -j \\
-M_1 & 0 & 0 & M_1 & 0 & 0 \\
0 & -M_2 & 0 & 0 & M_2 & 0 \\
0 & 0 & -j & 0 & 0 & j
\end{pmatrix},
\end{align*}
and
\begin{align*}
 (\nabla^2\Psi(P_j))^2 =  2\begin{pmatrix}
M_1^2 & 0 & 0 & -M_1^2 & 0 & 0 \\
0 & M_2^2 & 0 & 0 & -M_2^2 & 0 \\
0 & 0 & j^2 & 0 & 0 & -j^2 \\
-M_1^2 & 0 & 0 & M_1^2 & 0 & 0 \\
0 & -M_2^2 & 0 & 0 & M_2^2 & 0 \\
0 & 0 & -j^2 & 0 & 0 & j^2
\end{pmatrix}.
\end{align*}
Both $A$ and $A^2$ map any vector of the form $(v, v)$ where $v\in \R^{M+1}$ to zero, and thus \eqref{matrix_estimate_thm_of_sums} implies that
\begin{align*}
 v^T(H + E)v \leq 0,
\end{align*}
for all $v\in \R^{M+1}$. Setting suitable components of $v$ to zero, one sees that any principal subminor of $H+E$ satisfies the same type of condition. By the definition of the sets $\bar{J}^{2,+}$, we know that there exists a sequence $(\tilde X_j^k, \tilde Y_j^k, \tilde t_j^k)$ converging to $(\tilde X_j, \tilde Y_j, \tilde t_j)$ and elements $(\xi^k_j, E_k) \in J^{2,+}\phi(\tilde X_j^k, \tilde Y_j^k, \tilde t_j^k)$ such that $(\xi^k_j, E_k) \to (\nabla_{\tilde X,\tilde Y, \tilde t}\Psi(P_j), E)$. Since $\phi$ is smooth, we know by the basic properties of superjets that
\begin{align*}
 \nabla_X^2\phi(\tilde X_j^k, \tilde Y_j^k, \tilde t_j^k) \leq E^X_k,
\end{align*}
where $E^X_k$ refers to the subminor of $E$ corresponding to the $X$-coordinates.

Since $(\tilde X_j,\tilde Y_j, \tilde t_j)$ converges to $(\hat X, \hat Y, \hat t)$ we can deduce as in the proof of (2.16) that
\begin{align}\label{basic_estimate}
 (m+p)(\partial_t - \tilde X_j\cdot \nabla_Y)\phi(\tilde X_j, \tilde Y_j, \tilde t_j) &< \lambda((p-2)\nabla_X^2\phi(\tilde X_j, \tilde Y_j, \tilde t_j) )
 \\
 \notag &\hphantom{<} + \Delta_X \phi(\tilde X_j, \tilde Y_j, \tilde t_j) - \eta,
\end{align}
for sufficiently large $j$. Thus, for sufficiently large $k$ we also have
\begin{align}\label{eta_est_p_geq_2}
 \notag \eta <& -(m+p)(\partial_t - \tilde X_j^k\cdot \nabla_Y)\phi(\tilde X_j^k, \tilde Y_j^k, \tilde t_j^k) + \lambda((p-2)\nabla_X^2\phi(\tilde X_j^k, \tilde Y_j^k, \tilde t_j^k) )\notag\\
 & + \Delta_X \phi(\tilde X_j^k, \tilde Y_j^k, \tilde t_j^k)\notag\\
 \leq & -(m+p)(\partial_t - \tilde X_j^k\cdot \nabla_Y)\phi(\tilde X_j^k, \tilde Y_j^k, \tilde t_j^k) + (p-2)\lambda(E^X_k) + \textrm{tr}(E^X_k),
\end{align}
where we also used the fact that we consider the case $p\geq 2$. Passing to the limit $k\to \infty$ and using the relations between first order derivatives of $\phi$ and $\Psi$ at $P_j$ yields
\begin{align}\label{hehu1}
 \eta &\leq -(m+p)(\partial_t - \tilde X_j\cdot \nabla_Y)\phi(\tilde X_j, \tilde Y_j, \tilde t_j) + (p-2)\lambda(E^X) + \textrm{tr}(E^X)
 \\
 \notag &= -(m+p)(\partial_{\tilde t} - \tilde X_j\cdot \nabla_{\tilde Y})\Psi(P_j) + (p-2)\lambda(E^X) + \textrm{tr}(E^X).
\end{align}
Similarly, we find a sequence $(X^k_j, Y^k_j, t^k_j)$ converging to $(X_j, Y_j, t_j)$ and elements $(q_k, -H_k)$ belonging to $J^{2,-} u(X^k_j, Y^k_j, t^k_j)$ such that $(q_k, -H_k) \to (\nabla_{\tilde X, \tilde Y, \tilde t}\Psi(P_j), -H)$.
Since $u$ is not necessarily smooth, utilizing this fact to get some estimate involving $H$ is not as straight-forward as in the previous case. Here we use an observation made in \cite{crandallil92}. Namely, one can always find a $C^2$-function $\zeta^k_j$ touching $u$ from below such that
\begin{align*}
(\nabla \zeta^k_j(X^k_j, Y^k_j, t^k_j), \nabla^2 \zeta^k_j(X^k_j, Y^k_j, t^k_j)) = (q_k, -H_k).
\end{align*}
Since $X_j \neq \tilde X_j$ and since $q_k \to \nabla_{\tilde X, \tilde Y, \tilde t}\Psi(P_j)$, we see that for $k$ sufficiently large, $\nabla_X \zeta^k_j(X^k_j, Y^k_j, t^k_j) \neq 0$ and thus, by property (i) in the statement of the lemma we have
\begin{align}\label{zeta_test_funct_est}
 \notag (m+p)(\partial_t - X^k_j\cdot \nabla_Y)\zeta^k_j(X^k_j,Y^k_j,t^k_j) &\geq (p-2)\Delta^N_{\infty,X} \zeta^k_j(X^k_j,Y^k_j,t^k_j) + \Delta_X \zeta^k_j(X^k_j,Y^k_j,t^k_j)
 \\
 \notag &\geq (p-2) \lambda( \nabla^2_X \zeta^k_j(X^k_j,Y^k_j,t^k_j) ) + \Delta_X \zeta^k_j (X^k_j,Y^k_j,t^k_j)
 \\
  &= (p-2)\lambda (-H_k^X) - \textrm{tr}(H_k^X).
\end{align}
Passing to the limit $k\to \infty$, we end up with
\begin{align}\label{hehu2}
 0 \leq (m+p)(\partial_{\tilde t} - X_j\cdot \nabla_{\tilde Y})\Psi(P_j) - (p-2)\lambda(-H^X) + \textrm{tr}(H^X).
\end{align}
Adding \eqref{hehu1} and \eqref{hehu2}, using the fact that $E^X + H^X$ is negative semidefinite and applying Young's inequality, we obtain
\begin{align*}
 \eta &\leq (m+p)(\tilde X_j - X_j)\cdot \nabla_{\tilde Y}\Psi(P_j) + (p-2)(\lambda(E^X) - \lambda(-H^X)) + \textrm{tr}(E^X + H^X)
 \\
 &\leq (m+p)j^4(\tilde X_j - X_j) \cdot (\tilde Y_j - Y_j)|\tilde Y_j - Y_j|^2
 \\
 &\leq c(j^4|\tilde X_j - X_j|^4 + j^4|\tilde Y_j - Y_j|^4).
\end{align*}
By \eqref{contra4-}, the right-hand side converges to zero as $j\to 0$. Since $\eta >0$, this is a contradiction.

In the case $p<2$, some modifications are needed as \eqref{contra9}, \eqref{eta_est_p_geq_2} and \eqref{zeta_test_funct_est} are only valid in their present form if $p\geq 2$. First, note that for any symmetric $(m \times m)$ matrix $B$ with ordered eigenvalues $\lambda_i(B)$, with $\lambda_m(B)$ being the largest, we have
\begin{align*}
 \lambda((p-2)B) + \textrm{tr} B = (p-2) \lambda_m(B) + \sum^m_{i=1}\lambda_i(B) = (p-1) \lambda_m(B) + \sum^{m-1}_{i=1} \lambda_i(B).
\end{align*}
Recalling that the Loewner order of symmetric matrices implies the same order for the eigenvalues, we replace the estimate of terms in \eqref{eta_est_p_geq_2} by
\begin{align*}
\lambda((p-2)\nabla_X^2\phi(\tilde X_j^k, \tilde Y_j^k, \tilde t_j^k) ) + \Delta_X \phi(\tilde X_j^k, \tilde Y_j^k, \tilde t_j^k) \leq (p-1)\Lambda(E^X_k) + \sum^{m-1}_{i=1} \lambda_i(E^X_k).
\end{align*}
A similar reasoning can be utilized in the case of \eqref{contra9}. In the estimate \eqref{zeta_test_funct_est} we instead must proceed as follows:
\begin{align*}
 (p-2)&\Delta^N_{\infty,X} \zeta^k_j(X^k_j,Y^k_j,t^k_j) + \Delta_X \zeta^k_j(X^k_j,Y^k_j,t^k_j)
 \\
 &\geq (p-2)\Lambda(\nabla_X^2 \zeta^k_j(X^k_j,Y^k_j,t^k_j)) + \Delta_X \zeta^k_j(X^k_j,Y^k_j,t^k_j)
 \\
 &= (p-2)\Lambda(-H^X_k) + \textrm{tr}(-H^X_k)
 \\
 &= (p-1)\Lambda(-H^X_k) + \sum^{m-1}_{i=1}\lambda_i(-H^X_k).
\end{align*}
After passing to the limit $k\to \infty$, we can then utilize the fact that $\lambda_i(E^X) \leq \lambda_i(-H^X)$ for all $i \in \{1,\dots,m\}$ to arrive at the same contradiction as in the previous case.
\end{proof}

\subsection{Asymptotic mean value formulas in the viscosity sense}

Let $p$, $1<p\leq\infty$. In the following, we let
\begin{align}\label{Conv}
\alpha=1, \beta=0, \textrm{ if } p=\infty, \textrm{ and } \alpha=\frac{p-2}{m+p}, \beta=\frac{m+2}{m+p}, \textrm{ if } p<\infty.
\end{align}
Then $\alpha+\beta=1$ for all $p$, $1<p\leq\infty$. Throughout the paper we will use the convention that $\alpha$ and $\beta$ are defined according to \eqref{Conv}.

\begin{definition}\label{meanvalueviscosity} Let $D\subset\mathbb R^{M+1}$ be an open set and consider $p$, $1<p\leq\infty$. Let $\alpha$ and $\beta$ be defined as in \eqref{Conv}.  Let $u\in C(D)$. We say that $u$ satisfies the asymptotic mean value formula
\begin{align*}
u(X,Y,t)&=\frac \alpha 2\barint_{B_{\epsilon^3}(Y)} \barint_{t-\epsilon^2}^t \biggl\{\sup_{\tilde X\in{{B_\epsilon(X)}}}u(\tilde X,\tilde Y-(\tilde t-t)X,\tilde t)\biggr \}\d \tilde Y\d \tilde t\notag\\
&+\frac \alpha 2\barint_{B_{\epsilon^3}(Y)} \barint_{t-\epsilon^2}^t
\biggl\{\inf_{\tilde X\in{{B_\epsilon(X)}}}u(\tilde X,\tilde Y-(\tilde t-t)X,\tilde t)\biggr \}\d \tilde Y\d \tilde t\notag\\
&+\beta \barint_{B_\epsilon(X)} \barint_{B_{\epsilon^3}(Y)} \barint_{t-\epsilon^2}^t u(\tilde X,\tilde Y-(\tilde t-t)X,\tilde t) \d \tilde X \d \tilde Y\d \tilde t\notag\\
&+{o}(\epsilon^2),\mbox{ as }\epsilon\to 0,
\end{align*}
in the viscosity sense at $(X,Y,t)\in D$, if for every $\phi$ as in Lemma \ref{Vissolsimp} and touching $u$ from below, we have
\begin{align}\label{meanvalue3++}
\phi(X,Y,t)&\geq\frac \alpha 2\barint_{B_{\epsilon^3}(Y)} \barint_{t-\epsilon^2}^t \biggl\{\sup_{\tilde X\in{{B_\epsilon(X)}}}\phi(\tilde X,\tilde Y-(\tilde t-t)X,\tilde t)\biggr \}\d \tilde Y\d \tilde t\notag\\
&+\frac \alpha 2\barint_{B_{\epsilon^3}(Y)} \barint_{t-\epsilon^2}^t
\biggl\{\inf_{\tilde X\in{{B_\epsilon(X)}}}\phi(\tilde X,\tilde Y-(\tilde t-t)X,\tilde t)\biggr \}\d \tilde Y\d \tilde t\notag\\
&+\beta \barint_{B_\epsilon(X)} \barint_{B_{\epsilon^3}(Y)} \barint_{t-\epsilon^2}^t \phi(\tilde X,\tilde Y-(\tilde t-t)X,\tilde t) \d \tilde X \d \tilde Y\d \tilde t\notag\\
&+{o}(\epsilon^2),\mbox{ as }\epsilon\to 0,
\end{align}
and, if for every $\phi$ as in Lemma \ref{Vissolsimp} and touching $u$ from above, we have
\begin{align}\label{meanvalue3++gg}
\phi(X,Y,t)&\leq\frac \alpha 2\barint_{B_{\epsilon^3}(Y)} \barint_{t-\epsilon^2}^t \biggl\{\sup_{\tilde X\in{{B_\epsilon(X)}}}\phi(\tilde X,\tilde Y-(\tilde t-t)X,\tilde t)\biggr \}\d \tilde Y\d \tilde t\notag\\
&+\frac \alpha 2\barint_{B_{\epsilon^3}(Y)} \barint_{t-\epsilon^2}^t
\biggl\{\inf_{\tilde X\in{{B_\epsilon(X)}}}\phi(\tilde X,\tilde Y-(\tilde t-t)X,\tilde t)\biggr \}\d \tilde Y\d \tilde t\notag\\
&+\beta \barint_{B_\epsilon(X)} \barint_{B_{\epsilon^3}(Y)} \barint_{t-\epsilon^2}^t \phi(\tilde X,\tilde Y-(\tilde t-t)X,\tilde t) \d \tilde X \d \tilde Y\d \tilde t\notag\\
&+{o}(\epsilon^2),\mbox{ as }\epsilon\to 0.
\end{align}
\end{definition}
Note that in the above definition, we only consider the type of test functions used in Lemma \ref{Vissolsimp}. That is, at the point $(X,Y,t)$ under consideration, we only require the conditions to hold for test functions with $\nabla_X \phi(X,Y,t) \neq 0$ and the test functions for which both $\nabla_X \phi(X,Y,t) = 0$ and $\nabla_X^2 \phi(X,Y,t)=0$ hold. This is important in order for Theorem \ref{meanvalue1thm} below to be valid.

\section{Asymptotic mean-value properties}\label{sec2}
In this section we show a connection between viscosity solutions and asymptotic mean value formulas. Our starting point is the following result for $C^2$-solutions in the case $p=2$.
\begin{lemma}\label{lem:mean_value_p=2}
 Let $D\subset\mathbb R^{M+1}$. Then a function $u\in C^2(D)$ satisfies the asymptotic mean value formula
 \begin{align*}
  u(X,Y,t) = \barint_{B_\epsilon(X)} \barint_{B_{\epsilon^3}(Y)} \barint_{t-\epsilon^2}^t u(\tilde X,\tilde Y-(\tilde t-t)X,\tilde t) \d \tilde X \d \tilde Y\d \tilde t + {o}(\epsilon^2),\mbox{ as }\epsilon\to 0,
 \end{align*}
in the classical sense if and only if $u$ is a classical solution to
\begin{align*}
 \mathcal{K}_2 u(X,Y,t) = 0 \mbox { in }D.
\end{align*}
\end{lemma}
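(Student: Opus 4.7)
The plan is to Taylor-expand $u$ to second order around $(X,Y,t)$ and compute the triple average term by term, using the parity symmetries of $B_\epsilon(X)$ and $B_{\epsilon^3}(Y)$ to kill odd moments. First I would change variables to $\xi := \tilde X - X$, $\eta := \tilde Y - Y$, $\tau := \tilde t - t$, so that the integrand becomes $u(X + \xi, Y + (\eta - \tau X), t + \tau)$, integrated over $B_\epsilon(0) \times B_{\epsilon^3}(0) \times [-\epsilon^2, 0]$. The displacement $\Delta P := (\xi, \eta - \tau X, \tau)$ satisfies $|\Delta P| \lesssim \epsilon$ uniformly on compact subsets of $D$ (the dominant contribution coming from $|\xi|$), so for $u\in C^2(D)$ the Taylor remainder $R = o(|\Delta P|^2)$ is $o(\epsilon^2)$ uniformly and contributes only $o(\epsilon^2)$ after integration.

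Next, I would write out the second-order expansion
\begin{align*}
 u(X + \xi, Y + \eta - \tau X, t + \tau) &= u + \nabla_X u \cdot \xi + \nabla_Y u \cdot (\eta - \tau X) + \partial_t u \cdot \tau \\
 &\quad + \tfrac{1}{2}\langle \nabla^2 u\, \Delta P, \Delta P\rangle + R,
\end{align*}
and identify the surviving terms. By parity of the balls, $\nabla_X u \cdot \xi$ and $\nabla_Y u \cdot \eta$ integrate to zero, while $\partial_t u \cdot \tau$ and $\nabla_Y u \cdot (-\tau X)$ survive. At second order, a case-by-case parity check kills every cross term containing an odd power of $\xi_i$ or $\eta_j$; the remaining surviving terms other than $\tfrac{1}{2}\langle \nabla_X^2 u\, \xi, \xi\rangle$ are $O(\epsilon^4) = o(\epsilon^2)$, since $|\eta - \tau X|^2$ and $\tau^2$ are both $O(\epsilon^4)$ on the integration region.

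Applying the standard identities $\tfrac{1}{\epsilon^2}\int_{-\epsilon^2}^{0}\tau\,\d\tau = -\tfrac{\epsilon^2}{2}$ and $\barint_{B_\epsilon(0)}\xi_i\xi_j\,\d\xi = \tfrac{\epsilon^2}{m+2}\delta_{ij}$, the averaged integral becomes
\[
 u(X,Y,t) + \frac{\epsilon^2}{2(m+2)}\bigl(\Delta_X u + (m+2)(X\cdot\nabla_Y u - \partial_t u)\bigr) + o(\epsilon^2).
\]
Thus the asymptotic mean value formula holds at $(X,Y,t)$ if and only if $\mathcal{K}_2 u(X,Y,t) := \Delta_X u + (m+2)(X\cdot\nabla_Y u - \partial_t u) = 0$, which proves both implications simultaneously: the "if" direction is immediate from the display, while for "only if" one divides the display by $\epsilon^2$ and sends $\epsilon \to 0$.

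The main technical obstacle will be the bookkeeping induced by the anisotropic scaling $\xi \sim \epsilon$, $\tau \sim \epsilon^2$, $\eta \sim \epsilon^3$, combined with the $Y$-shift $\eta - \tau X$ that couples the $\eta$- and $\tau$-scales in the $Y$-argument of $u$. One has to check carefully that no second-order cross term contributes to the $\epsilon^2$-coefficient, and in particular verify that the first-order piece $\nabla_Y u \cdot (-\tau X)$ does \emph{not} vanish in the average but instead produces exactly the drift $X\cdot\nabla_Y u$ responsible for the hypoelliptic (Kolmogorov) structure, rather than being relegated to a higher-order remainder.
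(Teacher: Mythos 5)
Your proof is correct and follows essentially the same route as the paper's: a second-order Taylor expansion of $u$ around $(X,Y,t)$, parity of $B_\epsilon(X)$ and $B_{\epsilon^3}(Y)$ to kill the odd terms, the moment identity $\barint_{B_\epsilon(0)}\xi_i\xi_j\,\d\xi = \frac{\epsilon^2}{m+2}\delta_{ij}$ to produce $\frac{\epsilon^2}{2(m+2)}\Delta_X u$, and the time average $\barint_{t-\epsilon^2}^t(\tilde t - t)\,\d\tilde t = -\tfrac{\epsilon^2}{2}$ to produce the drift $X\cdot\nabla_Y u - \partial_t u$, arriving at the same display $u + \frac{\epsilon^2}{2(m+2)}\mathcal{K}_2 u + o(\epsilon^2)$. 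Your extra bookkeeping of the second-order cross terms (showing they are either killed by $\xi$-parity or of order $\epsilon^4$) is slightly more explicit than the paper's but is the same observation.
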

\begin{proof}
 Consider $(\tilde X,\tilde Y,\tilde t)\in {B_\epsilon(X)}\times{B_{\epsilon^3}(Y)}\times({t-\epsilon^2},t)$. Let $u \in C^2(D)$. Then by Taylor's formula at $(X,Y,t)$, we have
\begin{align*}
u(\tilde X,\tilde Y-(\tilde t-t)X,\tilde t)&= u(X,Y,t)+\nabla_Xu(X,Y,t)\cdot(\tilde X-X)\notag
\\
&+\frac 1 2 \langle \nabla_X^2u(X,Y,t)(\tilde X-X), (\tilde X-X)\rangle\notag
\\
&+\nabla_Y u(X,Y,t)\cdot (\tilde Y-Y-(\tilde t-t)X)+\partial_tu(X,Y,t)(\tilde t-t)+ {o}(\epsilon^2)\notag\\
&=u(X,Y,t)+\nabla_Xu(X,Y,t)\cdot(\tilde X-X)\notag\\
&+\frac 1 2 \langle \nabla_X^2u(X,Y,t)(\tilde X-X), (\tilde X-X)\rangle\notag\\
&-(\tilde t-t)\bigl (X\cdot\nabla_Y-\partial_t\bigr)u(X,Y,t)+ {o}(\epsilon^2), \mbox{ as }\epsilon\to 0.
\end{align*}
We  intend to take the average in the above display with respect to $(\tilde X,\tilde Y,\tilde t)\in {B_\epsilon(X)}\times{B_{\epsilon^3}(Y)}\times({t-\epsilon^2},t)$. Doing so, we see by symmetry that the contribution from the term $\nabla_Xu(X,Y,t)\cdot(\tilde X-X)$ is zero. Furthermore, by the same reason,
\begin{align*}
 &\frac 1 2 \barint_{B_\epsilon(X)} \barint_{B_{\epsilon^3}(Y)} \barint_{t-\epsilon^2}^t \langle \nabla_X^2u(X,Y,t)(\tilde X-X), (\tilde X-X)\rangle\d \tilde X \d \tilde Y\d \tilde t\notag\\
 &=\frac {\epsilon^2}{2(m+2)}\Delta_X u(X,Y,t)+{o}(\epsilon^2).
\end{align*}
In addition,
\begin{eqnarray*}
 &&\barint_{B_\epsilon(X)} \barint_{B_{\epsilon^3}(Y)} \barint_{t-\epsilon^2}^t (\tilde t-t) \d \tilde X \d \tilde Y\d \tilde t=-\frac {\epsilon^2}{2}.
\end{eqnarray*}
Put together, we deduce that
\begin{align}\label{meanvalue3aaaa}
 &\barint_{B_\epsilon(X)} \barint_{B_{\epsilon^3}(Y)} \barint_{t-\epsilon^2}^t u(\tilde X,\tilde Y-(\tilde t-t)X,\tilde t) \d \tilde X \d \tilde Y\d \tilde t\notag\\
 &=u(X,Y,t)+\frac {\epsilon^2}{2(m+2)}\mathcal{K}_2 u(X,Y,t)+{o}(\epsilon^2),\mbox{ as }\epsilon\to 0.
\end{align}
This holds for any $C^2$-function $u$. In particular, assume that $\mathcal{K}_2 u(X,Y,t)=0$. Then the asymptotic mean value formula holds. Assuming instead that the asymptotic mean value formula holds, we see that $\mathcal{K}_2 u(X,Y,t)=0$.
\end{proof}

\begin{theorem}\label{meanvalue1thm} Let $D\subset\mathbb R^{M+1}$ be an open set and consider $p$, $1<p\leq\infty$. Let $\alpha$ and $\beta$ be defined as
in \eqref{Conv}. Let $u\in C(D)$. The asymptotic mean value formula
\begin{align*}
u(X,Y,t) &= \frac \alpha 2\barint_{B_{\epsilon^3}(Y)} \barint_{t-\epsilon^2}^t \biggl\{\sup_{\tilde X\in{{B_\epsilon(X)}}}u(\tilde X,\tilde Y-(\tilde t-t)X,\tilde t)\biggr \}\d \tilde Y\d \tilde t \notag
\\
&+\frac \alpha 2\barint_{B_{\epsilon^3}(Y)} \barint_{t-\epsilon^2}^t
\biggl\{\inf_{\tilde X\in{{B_\epsilon(X)}}}u(\tilde X,\tilde Y-(\tilde t-t)X,\tilde t)\biggr \}\d \tilde Y\d \tilde t\notag
\\
&+\beta \barint_{B_\epsilon(X)} \barint_{B_{\epsilon^3}(Y)} \barint_{t-\epsilon^2}^t u(\tilde X,\tilde Y-(\tilde t-t)X,\tilde t) \d \tilde X \d \tilde Y\d \tilde t\notag
\\
&+{o}(\epsilon^2),\mbox{ as }\epsilon\to 0,
\end{align*}
holds for every $(X,Y,t)\in D$ in the viscosity sense  if  and only if $u$ is a viscosity solution to
\begin{eqnarray}\label{eq:K_pu=0}
  \K_p u(X,Y,t)=0\mbox { in }D.
\end{eqnarray}
\end{theorem}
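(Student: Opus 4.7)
\medskip

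\noindent\textbf{Proof plan.} The proof will be a standard Taylor expansion argument that computes both sides of the mean value identity when $u$ is replaced by a test function $\phi\in C^2(D)$, combined with Lemma \ref{Vissolsimp} to reduce to two regimes: $(a)$ $\nabla_X\phi(X_0,Y_0,t_0)\ne 0$ and $(b)$ $\nabla_X\phi(X_0,Y_0,t_0)=0$ together with $\nabla_X^2\phi(X_0,Y_0,t_0)=0$. Fix $(X_0,Y_0,t_0)\in D$ and set, for a given test function $\phi$,
\begin{align*}
\mathrm{RHS}_\phi(\epsilon)&:=\frac{\alpha}{2}\barint_{B_{\epsilon^3}(Y_0)}\barint_{t_0-\epsilon^2}^{t_0}\Bigl\{\sup_{\tilde X\in B_\epsilon(X_0)}+\inf_{\tilde X\in B_\epsilon(X_0)}\Bigr\}\phi(\tilde X,\tilde Y-(\tilde t-t_0)X_0,\tilde t)\, d\tilde Y\, d\tilde t\\
&\quad+\beta\barint_{B_\epsilon(X_0)}\barint_{B_{\epsilon^3}(Y_0)}\barint_{t_0-\epsilon^2}^{t_0}\phi(\tilde X,\tilde Y-(\tilde t-t_0)X_0,\tilde t)\, d\tilde X\, d\tilde Y\, d\tilde t.
\end{align*}
The goal is to show that in regime $(a)$ one has
\[
\mathrm{RHS}_\phi(\epsilon)=\phi(X_0,Y_0,t_0)+\frac{\epsilon^2}{2(m+p)}\K_p\phi(X_0,Y_0,t_0)+o(\epsilon^2),
\]
while in regime $(b)$ all second-order $X$-derivatives drop out and one gets
\[
\mathrm{RHS}_\phi(\epsilon)=\phi(X_0,Y_0,t_0)+\frac{\epsilon^2}{2}(X_0\cdot\nabla_Y-\partial_t)\phi(X_0,Y_0,t_0)+o(\epsilon^2).
\]

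For the triple-average term in regime $(a)$ I would just repeat the Taylor-plus-symmetry calculation of Lemma \ref{lem:mean_value_p=2}, observing that the shift $\tilde Y\mapsto \tilde Y-(\tilde t-t_0)X_0$ cancels the only problematic cross term arising from $\nabla_Y\phi\cdot((\tilde t-t_0)X_0)$, and that the length of the time interval here is $\epsilon^2$ (rather than $\epsilon^2/(m+2)$), which simply changes constants and delivers
\[
\barint\barint\barint\phi=\phi+\tfrac{\epsilon^2}{2(m+2)}\Delta_X\phi+\tfrac{\epsilon^2}{2}(X_0\cdot\nabla_Y-\partial_t)\phi+o(\epsilon^2).
\]
For the sup/inf term in regime $(a)$, continuity of $\nabla_X\phi$ and the estimates $|\tilde Y-Y_0|<\epsilon^3$, $|\tilde t-t_0|<\epsilon^2$ imply that on $B_\epsilon(X_0)\times B_{\epsilon^3}(Y_0)\times(t_0-\epsilon^2,t_0)$ the map $\tilde X\mapsto \phi(\tilde X,\tilde Y-(\tilde t-t_0)X_0,\tilde t)$ has nonvanishing gradient, and a standard perturbation argument locates its maximum (respectively minimum) on $\overline{B_\epsilon(X_0)}$ at $X_0\pm\epsilon\nu+O(\epsilon^2)$, with $\nu=\nabla_X\phi(X_0,Y_0,t_0)/|\nabla_X\phi(X_0,Y_0,t_0)|$. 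Plugging this back and noting $\langle\nabla_X^2\phi\,\nu,\nu\rangle=\Delta_{\infty,X}^N\phi$ at the base point, the $\pm\epsilon|\nabla_X\phi|$ contributions cancel when sup and inf are summed, and
\[
\tfrac{1}{2}\{\sup+\inf\}=\phi(X_0,\tilde Y-(\tilde t-t_0)X_0,\tilde t)+\tfrac{\epsilon^2}{2}\Delta_{\infty,X}^N\phi(X_0,Y_0,t_0)+o(\epsilon^2),
\]
uniformly in $(\tilde Y,\tilde t)$. Averaging the leading term in $(\tilde Y,\tilde t)$ as in Step~1 and weighting by $\alpha$ and $\beta$, the algebraic identities $\alpha+\beta=1$, $\alpha/2=(p-2)/(2(m+p))$ and $\beta/(2(m+2))=1/(2(m+p))$ assemble exactly the claimed $\K_p\phi$ coefficient.

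In regime $(b)$ the vanishing of $\nabla_X\phi$ and $\nabla_X^2\phi$ at $(X_0,Y_0,t_0)$, combined with the fact that both objects change by $O(\epsilon^2)$ when evaluated at the shifted arguments $\tilde Y-(\tilde t-t_0)X_0$ with $\tilde Y\in B_{\epsilon^3}(Y_0)$, $\tilde t\in(t_0-\epsilon^2,t_0)$, implies that on $B_\epsilon(X_0)$
\[
\phi(\tilde X,\tilde Y-(\tilde t-t_0)X_0,\tilde t)=\phi(X_0,\tilde Y-(\tilde t-t_0)X_0,\tilde t)+o(\epsilon^2)
\]
uniformly in $(\tilde Y,\tilde t)$; hence $\sup$, $\inf$ and the $\tilde X$-average all collapse to this common value, the $\Delta_{\infty,X}^N\phi$ and $\Delta_X\phi$ contributions disappear, and the claimed expansion follows. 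Given these two expansions, the mean value inequality \eqref{meanvalue3++} for $\phi$ touching $u$ from below reads $0\ge \tfrac{\epsilon^2}{2(m+p)}\K_p\phi+o(\epsilon^2)$, i.e.\ $\K_p\phi(X_0,Y_0,t_0)\le 0$ in case $(a)$ and $(m+p)(\partial_t\phi-X_0\cdot\nabla_Y\phi)\ge 0$ in case $(b)$; this is precisely the supersolution condition in Lemma \ref{Vissolsimp}. The dual inequality \eqref{meanvalue3++gg} yields the subsolution condition in the same way, and the case $p=\infty$ (where $\beta=0$ and only the sup/inf term survives) is included with no change in the argument.

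The main obstacle I anticipate is the uniform $o(\epsilon^2)$ control of the sup/inf expansion in Step~2, i.e.\ pinning down the location of the extremizers to accuracy $O(\epsilon^2)$ uniformly in $(\tilde Y,\tilde t)\in B_{\epsilon^3}(Y_0)\times(t_0-\epsilon^2,t_0)$, and verifying that the Hessian term $\langle\nabla_X^2\phi\,\nu,\nu\rangle$ evaluated at the shifted argument differs from its value at $(X_0,Y_0,t_0)$ only by an admissible $o(1)$ error before being multiplied by $\epsilon^2$. Everything else is bookkeeping of Taylor coefficients together with the algebraic identities tying $\alpha$, $\beta$ and the coefficients of $\K_p$.
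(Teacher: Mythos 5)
Your proposal is correct and mirrors the paper's proof in almost all respects: the Taylor-plus-symmetry computation for the triple average, the algebraic assembly via $\alpha+\beta=1$ and $(p-2)+(m+2)=m+p$, the two-regime split supplied by Lemma \ref{Vissolsimp}, and the final translation of the expansions into the viscosity inequalities. The one place where your route diverges is the sup/inf term. You propose to locate \emph{both} extremizers at $X_0\pm\epsilon\nu+O(\epsilon^2)$ on $\partial B_\epsilon(X_0)$ and argue that the $\pm\epsilon|\nabla_X\phi|$ contributions cancel. This is correct, but it hides a subtle point: the $O(\epsilon^2)$ displacement of the extremizer from $X_0\pm\epsilon\nu$ is generically \emph{not} $o(\epsilon^2)$, so the first-order cancellation only works because the displacement, constrained to the unit sphere, has a component along $\nu$ of size $O(\epsilon^2)$ (by the constraint $|\omega|=1$, one has $\hat p\cdot\delta=\tfrac12|\delta|^2$), making the residual linear term $O(\epsilon^3)$. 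You rightly flag this as the main obstacle. The paper circumvents it entirely with an antipodal-reflection device: take the minimizer $X_1$, Taylor-expand at $X_1$ and at its exact antipode $\tilde X_1=2X-X_1$, so the linear terms cancel \emph{identically}, and use $\phi(\tilde X_1,\cdot)\leq\max$ to get a one-sided inequality \eqref{imp}; then run the symmetric argument from the maximizer to sandwich. Only the \emph{limit} of $(X_1-X)/\epsilon$ is needed (not a quantitative $O(\epsilon^2)$ expansion), which follows from the fact that $\nabla_X\phi$ at $X_1$ is radial and inward-pointing. Your approach is workable and somewhat more direct, but the antipodal trick avoids the delicate uniform error analysis you anticipate having to carry out, so it is worth adopting if you write this up in full.
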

\begin{proof} We give detailed proofs in the cases $p=2$ and $p=\infty$. In the end we, explain how the same techniques can be utilized also to treat the other cases $1<p<\infty$.

{\bf Proof in the case $\boldsymbol{p=2}$}. Suppose that $u\in C(D)$ is a viscosity solution to \eqref{eq:K_pu=0} in the case $p=2$. Let $\phi$ be a smooth function touching $u$ at $(X,Y,t)$ from below. Then, by Definition \ref{Vissol},
\begin{align*}
 \mathcal{K}_2 \phi (X,Y,t) \leq 0.
\end{align*}
Note that the expression \eqref{meanvalue3aaaa} was proved for any $C^2$-function and that especially holds for $\phi$. Thus,
\begin{align}\label{phi_est}
 \notag \barint_{B_\epsilon(X)} \barint_{B_{\epsilon^3}(Y)} \barint_{t-\epsilon^2}^t \phi(\tilde X,\tilde Y-(\tilde t-t)X,\tilde t) \d \tilde X \d \tilde Y\d \tilde t &=\phi(X,Y,t)+\frac {\epsilon^2}{2(m+2)}\mathcal{K}_2 \phi(X,Y,t)+{o}(\epsilon^2)
 \\
 &\leq \phi(X,Y,t) + {o}(\epsilon^2),
\end{align}
i.e., we have verified \eqref{meanvalue3++}. For a test function $\phi$ touching $u$ from above, we similarly obtain the reverse estimate \eqref{meanvalue3++gg}. Thus, we have verified that $u$ satisfies the asymptotic mean value formula in the viscosity sense. Conversely, suppose that $u$ satisfies the asymptotic mean value formula in the viscosity sense. If $\phi$ is a test function touching $u$ from below, we may use \eqref{meanvalue3aaaa} with $u=\phi$ and the estimate \eqref{meanvalue3++} to conclude that
\begin{align*}
 \frac {\epsilon^2}{2(m+2)}\mathcal{K}_2 \phi(X,Y,t) &= \barint_{B_\epsilon(X)} \barint_{B_{\epsilon^3}(Y)} \barint_{t-\epsilon^2}^t \phi(\tilde X,\tilde Y-(\tilde t-t)X,\tilde t) \d \tilde X \d \tilde Y\d \tilde t - \phi(X,Y,t) +{o}(\epsilon^2)
 \\
 &\leq {o}(\epsilon^2).
\end{align*}
Dividing by $\epsilon^2$ and passing to the limit $\epsilon \to 0$, we end up with $\mathcal{K}_2 \phi (X,Y,t) \leq 0$. An analogous argument shows that $\mathcal{K}_2 \phi (X,Y,t) \geq 0$ holds for test functions $\phi$ touching $u$ from above, so $u$ is a viscosity solution.

{\bf Proof in the case $\boldsymbol{p=\infty}$}. Let $\phi$ be a test function. Consider $(\tilde Y,\tilde t)\in {B_{\epsilon^3}(Y)}\times({t-\epsilon^2},t)$  and let $X_1:=X_1^{\epsilon,\tilde Y,\tilde t} \in \overline{B_\epsilon(X)}$ be a point such that,
\begin{align*}
\phi(X_1, \tilde Y-(\tilde t-t)X,\tilde t)&=\min_{\tilde X\in\overline{{B_\epsilon(X)}}}\phi(\tilde X,\tilde Y-(\tilde t-t)X,\tilde t)=\inf_{\tilde X\in{{B_\epsilon(X)}}}\phi(\tilde X,\tilde Y-(\tilde t-t)X,\tilde t).
\end{align*}
Again, using Taylor's formula at $(X,Y,t)$ we deduce
\begin{align*}
\phi(X_1,\tilde Y-(\tilde t-t)X,\tilde t)
&=\phi(X,Y,t)+\nabla_X\phi(X,Y,t)\cdot(X_1-X)\notag\\
&+\frac 1 2 \langle \nabla_X^2\phi(X,Y,t)(X_1-X), (X_1-X)\rangle\notag\\
&-(\tilde t-t)\bigl (X\cdot\nabla_Y-\partial_t\bigr)\phi(X,Y,t)+ {o}(\epsilon^2),
\end{align*}
as $\epsilon\to 0$. Similarly, evaluating the Taylor expansion also at $\tilde X_1=2X-X_1$ and adding the two expansions, we see that
\begin{align*}
&\phi(\tilde X_1,\tilde Y-(\tilde t-t)X,\tilde t)+\phi(X_1,\tilde Y-(\tilde t-t) X,\tilde t)-2\phi(X,Y,t)\notag\\
&= \langle \nabla_X^2\phi(X,Y,t)(X_1-X), (X_1-X)\rangle-2(X\cdot\nabla_Y-\partial_t)\phi(X,Y,t)(\tilde t-t)+ {o}(\epsilon^2), \mbox{ as }\epsilon\to 0.
\end{align*}
Using that $X_1$ is the point where minimum occurs,
\begin{align*}
&\phi(\tilde X_1,\tilde Y-(\tilde t-t)X,\tilde t)+\phi(X_1,\tilde Y-(\tilde t-t)X,\tilde t)-2\phi(X,Y,t)\notag\\
&\leq \max_{\tilde X\in\overline{{B_\epsilon(X)}}}\phi(\tilde X,\tilde Y-(\tilde t-t)X,\tilde t)+\min_{\tilde X\in\overline{{B_\epsilon(X)}}}\phi(\tilde X,\tilde Y-(\tilde t-t)X,\tilde t)-2\phi(X,Y,t),
\end{align*}
and we deduce
\begin{align*}
& \max_{\tilde X\in\overline{{B_\epsilon(X)}}}\phi(\tilde X,\tilde Y-(\tilde t-t)X,\tilde t)+\min_{\tilde X\in\overline{{B_\epsilon(X)}}}\phi(\tilde X,\tilde Y-(\tilde t-t)X,\tilde t)-2\phi(X,Y,t)\notag\\
&\geq \langle \nabla_X^2\phi(X,Y,t)(X_1-X), (X_1-X)\rangle-2(X\cdot\nabla_Y-\partial_t)\phi(X,Y,t)(\tilde t-t)+{o}(\epsilon^2).
\end{align*}
Hence, taking averages
\begin{align}\label{imp}
&\frac 1 2\barint_{B_{\epsilon^3}(Y)} \barint_{t-\epsilon^2}^t \biggl\{\max_{\tilde X\in\overline{{B_\epsilon(X)}}}\phi(\tilde X,\tilde Y-(\tilde t-t)X,\tilde t)\biggr \}\d \tilde Y\d \tilde t\notag\\
&+\frac 1 2\barint_{B_{\epsilon^3}(Y)} \barint_{t-\epsilon^2}^t
\biggl\{\min_{\tilde X\in\overline{{B_\epsilon(X)}}}\phi(\tilde X,\tilde Y-(\tilde t-t)X,\tilde t)\biggr \}\d \tilde Y\d \tilde t\notag\\
&-\phi(X,Y,t)\notag\\
&\geq \frac {\epsilon^2}2\barint_{B_{\epsilon^3}(Y)} \barint_{t-\epsilon^2}^t \langle \nabla_X^2\phi(X,Y,t)(\frac{X_1^{\epsilon,\tilde Y,\tilde t}-X}{\epsilon}), (\frac{X_1^{\epsilon,\tilde Y,\tilde t}-X}{\epsilon})\rangle\, \d \tilde Y\d \tilde t\notag\\
&+\frac {\epsilon^2}2(X\cdot\nabla_Y-\partial_t)\phi(X,Y,t)+{o}(\epsilon^2).
\end{align}
This inequality holds for any smooth function $\phi$. Note that the reverse inequality can be derived by considering a point where $\phi$ attains its maximum. Assume that $\nabla_X\phi(X,Y,t)\neq 0$.  Then $\nabla_X\phi(X,\tilde Y -(\tilde t - t)X,\tilde t)\neq 0$ for all  $(\tilde Y,\tilde t)\in B_{\epsilon^3}(Y)\in [t-\epsilon^2,t]$ if $\epsilon$ is small enough, which implies that $X_1$ must lie on the boundary of $B_\epsilon(X)$. Furthermore, since $X_1$ is a minimum also on $\partial B_\epsilon(X)$, we can deduce that $\nabla_X\phi(X_1, \tilde Y - (\tilde t - t)X, \tilde t)$ is perpendicular to $\partial B_\epsilon(X)$ at $X_1$ and points inwards. Thus,
$$\lim_{\epsilon\to 0}\frac{X_1^{\epsilon,\tilde Y,\tilde t}-X}{\epsilon} = \lim_{\epsilon\to 0} -\frac {\nabla_X\phi}{|\nabla_X\phi|}(X_1^{\epsilon, \tilde Y, \tilde t},\tilde Y - (\tilde t - t)X, \tilde t) =-\frac {\nabla_X\phi}{|\nabla_X\phi|}(X,Y,t).$$
As a result,
\begin{eqnarray}\label{imp+}
\quad\quad\quad\lim_{\epsilon\to 0} \barint_{B_{\epsilon^3}(Y)} \barint_{t-\epsilon^2}^t \langle \nabla_X^2\phi(X,Y,t)(\frac{X_1^{\epsilon,\tilde Y,\tilde t}-X}{\epsilon}), (\frac{X_1^{\epsilon,\tilde Y,\tilde t}-X}{\epsilon})\rangle\, \d \tilde Y\d \tilde t=\Delta_{\infty,X}^N\phi(X,Y,t).
\end{eqnarray}
Combining \eqref{imp+} with \eqref{imp} and its reverse analogue, we see that
\begin{align}\label{impasdfsdaf}
&\frac 1 2\barint_{B_{\epsilon^3}(Y)} \barint_{t-\epsilon^2}^t \biggl\{\max_{\tilde X\in\overline{{B_\epsilon(X)}}}\phi(\tilde X,\tilde Y-(\tilde t-t)X,\tilde t)\biggr \}\d \tilde Y\d \tilde t\notag\\
&+\frac 1 2\barint_{B_{\epsilon^3}(Y)} \barint_{t-\epsilon^2}^t
\biggl\{\min_{\tilde X\in\overline{{B_\epsilon(X)}}}\phi(\tilde X,\tilde Y-(\tilde t-t)X,\tilde t)\biggr \}\d \tilde Y\d \tilde t\notag\\
&-\phi(X,Y,t)\notag
\\
&= \frac {\epsilon^2}2 \mathcal{K}_\infty \phi(X,Y,t) +{o}(\epsilon^2).
\end{align}
From this, it immediately follows, similarly as in the case $p=2$, that $u$ satisfies the condition for the asymptotic mean value formula at $(X,Y,t)$ if and only if the estimates required by the definition of viscosity solutions hold at $(X,Y,t)$.

It remains to consider the case where $\nabla_X\phi(X,Y,t)=0$ and $\nabla_X^2\phi(X,Y,t)=0$. In this case, the term in \eqref{imp} involving the Hessian vanishes. Recalling that we can prove a similar estimate in the reverse direction where again the term involving the Hessian vanishes, we see that
\begin{align}\label{imp_x_hess=0}
&\frac 1 2\barint_{B_{\epsilon^3}(Y)} \barint_{t-\epsilon^2}^t \biggl\{\max_{\tilde X\in\overline{{B_\epsilon(X)}}}\phi(\tilde X,\tilde Y-(\tilde t-t)X,\tilde t)\biggr \}\d \tilde Y\d \tilde t\notag
\\
&+\frac 1 2\barint_{B_{\epsilon^3}(Y)} \barint_{t-\epsilon^2}^t
\biggl\{\min_{\tilde X\in\overline{{B_\epsilon(X)}}}\phi(\tilde X,\tilde Y-(\tilde t-t)X,\tilde t)\biggr \}\d \tilde Y\d \tilde t\notag
\\
&-\phi(X,Y,t)\notag\\
&= \frac {\epsilon^2}2(X\cdot\nabla_Y-\partial_t)\phi(X,Y,t)+{o}(\epsilon^2).
\end{align}
Suppose now that $\phi$ touches $u$ from below. If $u$ is a viscosity solution, then by Lemma \ref{Vissolsimp}, we have
\begin{align}\label{thnt}
 (X\cdot\nabla_Y-\partial_t)\phi(X,Y,t) \leq 0,
\end{align}
and, combining this with \eqref{imp_x_hess=0}, we see that \eqref{meanvalue3++} holds. Conversely, assuming \eqref{meanvalue3++}, we obtain from \eqref{imp_x_hess=0} that
\begin{align*}
 \frac {\epsilon^2}2(X\cdot\nabla_Y-\partial_t)\phi(X,Y,t)+{o}(\epsilon^2) \leq 0.
\end{align*}
Dividing by $\epsilon^2$ and passing to the limit $\epsilon \to 0$, we end up with \eqref{thnt}. Similar equivalences can be obtained if $\phi$ touches $u$ from above. This confirms that also in the case $\nabla_X \phi(X,Y,t) = 0$ and $\nabla^2_X\phi(X,Y,t)=0$ the function $u$ satisfies the condition for viscosity solutions in Lemma \ref{Vissolsimp} if and only if it satisfies the asymptotic mean value formula in the viscosity sense at $(X,Y,t)$.

{\bf Proof in the cases $\boldsymbol{p\in (1,\infty)}$.} Adding the first line of \eqref{phi_est} multiplied by $\beta$ and \eqref{impasdfsdaf} multiplied by $\alpha$, we end up with an expression relating $\mathcal{K}_p\phi(X,Y,t)$ to the correct asymptotic mean value formula in the case that $\nabla_X\phi(X,Y,t)\neq 0$. In the case $\nabla_X\phi(X,Y,t) = 0$, $\nabla_X^2\phi(X,Y,t) = 0$, we instead add the first line of \eqref{phi_est} multiplied by $\beta$ and \eqref{thnt} multiplied by $\alpha$ and proceed as before.
\end{proof}

We also have the following version of Theorem \ref{meanvalue1thm}.
\begin{theorem}\label{meanvalue1thmgaian-} Let $D\subset\mathbb R^{M+1}$ be an open set and consider $p$, $1<p\leq\infty$. Let $\alpha$ and $\beta$ be defined as
in \eqref{Conv}.  Let $u\in C(D)$. The asymptotic mean value formula
\begin{align*}
u(X,Y,t)&=\frac \alpha 2\biggl\{\sup_{\tilde X\in{{B_\epsilon(X)}}}u(\tilde X,Y+{\epsilon^2}X/2,t-{\epsilon^2}/2)+\inf_{\tilde X\in{{B_\epsilon(X)}}}u(\tilde X,Y+{\epsilon^2}X/2,t-{\epsilon^2}/2)\biggr \}\notag
\\
&+\beta \barint_{B_\epsilon(X)} u(\tilde X,Y+{\epsilon^2}X/2,t-{\epsilon^2}/{2}) \d \tilde X\notag
\\
&+{o}(\epsilon^2),\mbox{ as }\epsilon\to 0,
\end{align*}
holds for every $(X,Y,t)\in D$ in the viscosity sense  if  and only if $u$ is a viscosity solution to
\begin{eqnarray}\label{solv1+}
  \K_p u(X,Y,t)=0\mbox { in }D.
\end{eqnarray}
\end{theorem}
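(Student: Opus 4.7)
The plan is to mimic the proof of Theorem \ref{meanvalue1thm}, replacing the iterated averages over $B_{\epsilon^3}(Y)\times(t-\epsilon^2,t)$ by point evaluation at $(Y+\epsilon^2 X/2, t-\epsilon^2/2)$. The key observation is that for any $\phi \in C^2$, Taylor expanding at $(X,Y,t)$ yields
\begin{align*}
\phi(\tilde X, Y+\tfrac{\epsilon^2}{2}X, t-\tfrac{\epsilon^2}{2}) &= \phi(X,Y,t) + \nabla_X\phi\cdot(\tilde X-X) + \tfrac{1}{2}\langle \nabla_X^2\phi(\tilde X-X),(\tilde X-X)\rangle\\
&\quad + \tfrac{\epsilon^2}{2}(X\cdot\nabla_Y\phi-\partial_t\phi) + o(\epsilon^2),
\end{align*}
because the mixed derivatives involving the $Y$- or $t$-shift contribute at order $o(\epsilon^2)$. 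Crucially, the term $\tfrac{\epsilon^2}{2}(X\cdot\nabla_Y\phi-\partial_t\phi)$ matches exactly the contribution produced by averaging $-(\tilde t-t)(X\cdot\nabla_Y-\partial_t)\phi$ over $(t-\epsilon^2,t)$ in the proof of Theorem \ref{meanvalue1thm}, since $\barint_{t-\epsilon^2}^t(\tilde t-t)\, \d\tilde t = -\epsilon^2/2$ and $\nabla_Y\phi\cdot(\tilde Y-Y)$ averages to zero over $B_{\epsilon^3}(Y)$. So the simplification to point evaluation is cosmetic at the order that matters.

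First I would establish the $C^2$-analogue of Lemma \ref{lem:mean_value_p=2} for the new formula: averaging the expansion above over $\tilde X\in B_\epsilon(X)$ kills the linear contribution and produces
\begin{align*}
\barint_{B_\epsilon(X)}\phi(\tilde X, Y+\tfrac{\epsilon^2}{2}X, t-\tfrac{\epsilon^2}{2})\, \d\tilde X = \phi(X,Y,t) + \tfrac{\epsilon^2}{2(m+2)}\K_2\phi(X,Y,t) + o(\epsilon^2),
\end{align*}
which handles the $p=2$ case and the $\beta$-weighted piece for general $p$. Next, for the $\alpha$-piece in the $p=\infty$ setting, I would choose $X_1\in\overline{B_\epsilon(X)}$ realizing the minimum of $\tilde X\mapsto \phi(\tilde X, Y+\epsilon^2 X/2, t-\epsilon^2/2)$ and set $\tilde X_1:=2X-X_1$. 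Adding the Taylor expansions at $X_1$ and $\tilde X_1$ eliminates the linear term and yields
\begin{align*}
&\max_{\tilde X\in\overline{B_\epsilon(X)}}\phi(\tilde X, Y+\tfrac{\epsilon^2}{2}X, t-\tfrac{\epsilon^2}{2}) + \min_{\tilde X\in\overline{B_\epsilon(X)}}\phi(\tilde X, Y+\tfrac{\epsilon^2}{2}X, t-\tfrac{\epsilon^2}{2}) - 2\phi(X,Y,t)\\
&\geq \langle \nabla_X^2\phi(X_1-X),(X_1-X)\rangle + \epsilon^2(X\cdot\nabla_Y\phi-\partial_t\phi) + o(\epsilon^2),
\end{align*}
with the reverse inequality obtained by taking $X_1$ to be a maximizer. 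When $\nabla_X\phi(X,Y,t)\neq 0$, the minimizer $X_1$ lies on $\partial B_\epsilon(X)$ for small $\epsilon$ and $(X_1-X)/\epsilon \to -\nabla_X\phi/|\nabla_X\phi|$, exactly as in the proof of Theorem \ref{meanvalue1thm}, so the Hessian term contributes $\epsilon^2\Delta_{\infty,X}^N\phi + o(\epsilon^2)$, giving
\[\tfrac{1}{2}(\max+\min) - \phi(X,Y,t) = \tfrac{\epsilon^2}{2}\K_\infty\phi(X,Y,t) + o(\epsilon^2).\]
When instead $\nabla_X\phi=\nabla_X^2\phi=0$ at $(X,Y,t)$, the Hessian term is $o(\epsilon^2)$ and both max/min and the average collapse to $\phi(X,Y,t)+\tfrac{\epsilon^2}{2}(X\cdot\nabla_Y-\partial_t)\phi + o(\epsilon^2)$.

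Combining the $\alpha$- and $\beta$-weighted pieces according to \eqref{Conv} yields, at every point where $\nabla_X\phi(X,Y,t)\neq 0$,
\[ (\text{RHS of the new MVF}) - \phi(X,Y,t) = \tfrac{\epsilon^2}{2(m+p)}\K_p\phi(X,Y,t) + o(\epsilon^2),\]
while at points where $\nabla_X\phi=0=\nabla_X^2\phi$ the same combination produces $\tfrac{\epsilon^2}{2}(X\cdot\nabla_Y-\partial_t)\phi + o(\epsilon^2)$. The equivalence in the viscosity sense between the new asymptotic mean value formula and the PDE $\K_p u=0$ then follows by dividing by $\epsilon^2$, sending $\epsilon\to 0$, testing with $\phi$ touching $u$ from above and below, and invoking Lemma \ref{Vissolsimp}, exactly as at the end of the proof of Theorem \ref{meanvalue1thm}. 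I expect the main obstacle to be the $p=\infty$ case when $\nabla_X\phi\neq 0$, specifically the justification that $(X_1^\epsilon-X)/\epsilon \to -\nabla_X\phi/|\nabla_X\phi|$ for the perturbed minimization problem, but this is actually easier here than in the proof of Theorem \ref{meanvalue1thm} since $X_1^\epsilon$ depends only on $\epsilon$ and not on additional integration variables $(\tilde Y, \tilde t)$, so pointwise convergence suffices and no subsequent dominated-convergence or integration argument is required.
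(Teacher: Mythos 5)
Your proposal is correct and takes exactly the route the paper prescribes: the paper's proof of Theorem~\ref{meanvalue1thmgaian-} simply states that it ``follows by retracing the proof of Theorem~\ref{meanvalue1thm},'' omitting the details, and you have supplied precisely those details (the Taylor expansion with the $Y$- and $t$-shifts contributing $\tfrac{\epsilon^2}{2}(X\cdot\nabla_Y\phi-\partial_t\phi)+o(\epsilon^2)$, the antipodal-point trick for the $\alpha$-piece, the boundary-minimizer limit $(X_1^\epsilon-X)/\epsilon\to-\nabla_X\phi/|\nabla_X\phi|$, and the recombination into $\K_p$). Your closing remark that the limit argument is actually simpler here because there is no residual integration over $(\tilde Y,\tilde t)$ is also correct.
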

\begin{proof} The proof follows by retracing the proof of Theorem \ref{meanvalue1thm}. We omit the details.
\end{proof}

Next we also note the following version of Theorem \ref{meanvalue1thm} in which we optimize the function $u(\tilde X,\tilde Y-(\tilde t-t)\tilde X,\tilde t)$ instead of $u(\tilde X,\tilde Y-(\tilde t-t)X,\tilde t)$.

\begin{theorem}\label{meanvalue1thmmodofied} Let $D\subset\mathbb R^{M+1}$ be an open set and consider $p$, $1<p\leq\infty$. Let $\alpha$ and $\beta$ be defined as
in \eqref{Conv}. Let $u\in C(D)$. The asymptotic mean value formula
\begin{align*}
u(X,Y,t) &= \frac \alpha 2\barint_{B_{\epsilon^3}(Y)} \barint_{t-\epsilon^2}^t \biggl\{\sup_{\tilde X\in{{B_\epsilon(X)}}}u(\tilde X,\tilde Y-(\tilde t-t)\tilde X,\tilde t)\biggr \}\d \tilde Y\d \tilde t \notag
\\
&+\frac \alpha 2\barint_{B_{\epsilon^3}(Y)} \barint_{t-\epsilon^2}^t
\biggl\{\inf_{\tilde X\in{{B_\epsilon(X)}}}u(\tilde X,\tilde Y-(\tilde t-t)\tilde X,\tilde t)\biggr \}\d \tilde Y\d \tilde t\notag
\\
&+\beta \barint_{B_\epsilon(X)} \barint_{B_{\epsilon^3}(Y)} \barint_{t-\epsilon^2}^t u(\tilde X,\tilde Y-(\tilde t-t)\tilde X,\tilde t) \d \tilde X \d \tilde Y\d \tilde t\notag
\\
&+{o}(\epsilon^2),\mbox{ as }\epsilon\to 0,
\end{align*}
holds for every $(X,Y,t)\in D$ in the viscosity sense  if  and only if $u$ is a viscosity solution to
\begin{eqnarray}\label{eq:K_pu=0ML}
  \K_p u(X,Y,t)=0\mbox { in }D.
\end{eqnarray}
\end{theorem}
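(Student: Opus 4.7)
The plan is to reduce Theorem \ref{meanvalue1thmmodofied} to Theorem \ref{meanvalue1thm} by showing that the two asymptotic mean value formulas differ, when evaluated on smooth test functions, by terms that are $o(\epsilon^2)$. Since the notion of ``asymptotic mean value formula in the viscosity sense'' only involves evaluating the formulas on $C^2$ test functions $\phi$ touching $u$ (as in Lemma \ref{Vissolsimp}), this $o(\epsilon^2)$ discrepancy is enough to establish the equivalence.

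First, I fix a test function $\phi \in C^2(D)$ and a point $(X,Y,t) \in D$, and compare the two relevant averages. For $(\tilde X, \tilde Y, \tilde t) \in B_\epsilon(X) \times B_{\epsilon^3}(Y) \times (t-\epsilon^2, t)$, the only difference between the integrands is in the position coordinate: one is $\tilde Y - (\tilde t - t) X$, the other is $\tilde Y - (\tilde t - t) \tilde X$. By the mean value theorem applied in the $Y$-variable,
\begin{equation*}
\phi(\tilde X, \tilde Y - (\tilde t - t)\tilde X, \tilde t) - \phi(\tilde X, \tilde Y - (\tilde t - t) X, \tilde t) = (\tilde t - t)\,\nabla_Y \phi(\tilde X, \xi, \tilde t)\cdot (X - \tilde X),
\end{equation*}
for some $\xi$ on the segment between $\tilde Y - (\tilde t - t) X$ and $\tilde Y - (\tilde t - t) \tilde X$. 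Since $|\tilde t - t|\leq \epsilon^2$, $|\tilde X - X|\leq \epsilon$, and $\nabla_Y \phi$ is bounded on the compact set swept out as $(\tilde X, \tilde Y, \tilde t)$ varies, this difference is $O(\epsilon^3) = o(\epsilon^2)$, uniformly in $(\tilde X, \tilde Y, \tilde t)$.

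Consequently, taking $\sup$, $\inf$, or average over $\tilde X \in B_\epsilon(X)$ (and further averaging over $(\tilde Y, \tilde t)$) preserves this bound: each of the three terms in the mean value formula of Theorem \ref{meanvalue1thmmodofied}, applied to $\phi$, equals the corresponding term from Theorem \ref{meanvalue1thm} up to an $o(\epsilon^2)$ error. Since $\alpha + \beta = 1$, summing the three terms shows that the full right-hand side of the formula in Theorem \ref{meanvalue1thmmodofied} evaluated at $\phi$ agrees with the right-hand side of the formula in Theorem \ref{meanvalue1thm} evaluated at $\phi$, up to $o(\epsilon^2)$.

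Therefore, $u$ satisfies the mean value formula of Theorem \ref{meanvalue1thmmodofied} in the viscosity sense at $(X,Y,t)$ if and only if it satisfies the one from Theorem \ref{meanvalue1thm} in the viscosity sense at $(X,Y,t)$, since each instance of the inequalities \eqref{meanvalue3++} and \eqref{meanvalue3++gg} carries over with an $o(\epsilon^2)$ adjustment that is absorbed into the existing $o(\epsilon^2)$ term. Applying Theorem \ref{meanvalue1thm} yields that both conditions are equivalent to $\mathcal{K}_p u = 0$ in the viscosity sense, completing the proof. The only mild subtlety is ensuring that the $o(\epsilon^2)$ bound on the difference survives the $\sup$ and $\inf$ operations, which follows from the elementary inequality $|\sup f - \sup g| \leq \sup|f-g|$ together with the uniform $O(\epsilon^3)$ bound above; no further obstacle is anticipated.
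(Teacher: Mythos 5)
Your proof is correct and takes a genuinely different, and cleaner, route than the paper's. You reduce Theorem \ref{meanvalue1thmmodofied} to Theorem \ref{meanvalue1thm} by observing that, on a fixed $C^{2}$ test function $\phi$, the two integrands differ by $(\tilde t-t)\nabla_Y\phi(\tilde X,\xi,\tilde t)\cdot(X-\tilde X)=O(\epsilon^3)$ uniformly on $B_\epsilon(X)\times B_{\epsilon^3}(Y)\times(t-\epsilon^2,t)$, and that this bound survives $\sup$, $\inf$, and averaging through $|\sup f-\sup g|\le\sup|f-g|$, so the two right-hand sides agree on $\phi$ up to $o(\epsilon^2)$. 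The paper uses this kind of reduction only for $p=2$, where there are no $\sup/\inf$ terms, and for $p=\infty$ it instead reruns the Taylor-expansion argument of Theorem \ref{meanvalue1thm} from scratch: it chooses $X_1^{\epsilon,\tilde Y,\tilde t}$ as the minimizer of the \emph{modified} map $\tilde X\mapsto\phi(\tilde X,\tilde Y-(\tilde t-t)\tilde X,\tilde t)$, notes that its gradient is $\nabla_X\phi-(\tilde t-t)\nabla_Y\phi$, and then verifies that $\lim_{\epsilon\to 0}(X_1^{\epsilon,\tilde Y,\tilde t}-X)/\epsilon=-\nabla_X\phi/|\nabla_X\phi|(X,Y,t)$, i.e.~the same limit as in Theorem \ref{meanvalue1thm}, before concluding. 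Your approach bypasses any tracking of the minimizer's location, which is shorter and exposes the real reason the change from $X$ to $\tilde X$ in the $Y$-slot is harmless; the paper's approach buys the explicit identity equating the modified mean-value expression with $\phi(X,Y,t)+\frac{\epsilon^2}{2}\mathcal K_\infty\phi(X,Y,t)+o(\epsilon^2)$, which is marginally more informative but not needed for the stated equivalence. Both arguments are valid.
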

\begin{proof} The proof in the case $p=2$ is essentially identical to the argument in Theorem \ref{meanvalue1thm} as, for a test function $\phi$,
$$\phi(\tilde X,\tilde Y-(\tilde t-t)\tilde X,\tilde t)-\phi(\tilde X,\tilde Y-(\tilde t-t)X,\tilde t)={o}(\epsilon^2)\mbox{ as }\epsilon\to 0.$$
In the following, we only give the complete proof in the case $p=\infty$.  Let $\phi$ be a test function. Consider $(\tilde Y,\tilde t)\in {B_{\epsilon^3}(Y)}\times({t-\epsilon^2},t)$ fixed,  and let now $X_1:=X_1^{\epsilon,\tilde Y,\tilde t} \in \overline{B_\epsilon(X)}$ be a point such that,
\begin{align*}
\phi(X_1, \tilde Y-(\tilde t-t)X_1,\tilde t)&=\min_{\tilde X\in\overline{{B_\epsilon(X)}}}\phi(\tilde X,\tilde Y-(\tilde t-t)\tilde X,\tilde t)=\inf_{\tilde X\in{{B_\epsilon(X)}}}\phi(\tilde X,\tilde Y-(\tilde t-t)\tilde X,\tilde t).
\end{align*}
As before, again using Taylor's formula at $(X,Y,t)$, we deduce
\begin{align*}
\phi(X_1,\tilde Y-(\tilde t-t)X_1,\tilde t)
&=\phi(X,Y,t)+\nabla_X\phi(X,Y,t)\cdot(X_1-X)\notag\\
&+\frac 1 2 \langle \nabla_X^2\phi(X,Y,t)(X_1-X), (X_1-X)\rangle\notag\\
&-(\tilde t-t)\bigl (X\cdot\nabla_Y-\partial_t\bigr)\phi(X,Y,t)+ {o}(\epsilon^2),
\end{align*}
as $\epsilon\to 0$. Similarly, evaluating the Taylor expansion also at $\tilde X_1=2X-X_1$, and adding the two expansions, we see that
\begin{align*}
&\phi(\tilde X_1,\tilde Y-(\tilde t-t)X_1,\tilde t)+\phi(X_1,\tilde Y-(\tilde t-t) X_1,\tilde t)-2\phi(X,Y,t)\notag\\
&= \langle \nabla_X^2\phi(X,Y,t)(X_1-X), (X_1-X)\rangle-2(X\cdot\nabla_Y-\partial_t)\phi(X,Y,t)(\tilde t-t)+ {o}(\epsilon^2), \mbox{ as }\epsilon\to 0.
\end{align*}
Using that $X_1$ is the point where minimum occurs,
\begin{align*}
&\phi(\tilde X_1,\tilde Y-(\tilde t-t)X_1,\tilde t)+\phi(X_1,\tilde Y-(\tilde t-t)X_1,\tilde t)-2\phi(X,Y,t)\notag\\
&\leq \max_{\tilde X\in\overline{{B_\epsilon(X)}}}\phi(\tilde X,\tilde Y-(\tilde t-t)\tilde X,\tilde t)+\min_{\tilde X\in\overline{{B_\epsilon(X)}}}\phi(\tilde X,\tilde Y-(\tilde t-t)\tilde X,\tilde t)-2\phi(X,Y,t),
\end{align*}
and we deduce
\begin{align*}
& \max_{\tilde X\in\overline{{B_\epsilon(X)}}}\phi(\tilde X,\tilde Y-(\tilde t-t)\tilde X,\tilde t)+\min_{\tilde X\in\overline{{B_\epsilon(X)}}}\phi(\tilde X,\tilde Y-(\tilde t-t)\tilde X,\tilde t)-2\phi(X,Y,t)\notag\\
&\geq \langle \nabla_X^2\phi(X,Y,t)(X_1-X), (X_1-X)\rangle-2(X\cdot\nabla_Y-\partial_t)\phi(X,Y,t)(\tilde t-t)+{o}(\epsilon^2).
\end{align*}
Hence, taking averages
\begin{align}\label{impagain}
&\frac 1 2\barint_{B_{\epsilon^3}(Y)} \barint_{t-\epsilon^2}^t \biggl\{\max_{\tilde X\in\overline{{B_\epsilon(X)}}}\phi(\tilde X,\tilde Y-(\tilde t-t)\tilde X,\tilde t)\biggr \}\d \tilde Y\d \tilde t\notag\\
&+\frac 1 2\barint_{B_{\epsilon^3}(Y)} \barint_{t-\epsilon^2}^t
\biggl\{\min_{\tilde X\in\overline{{B_\epsilon(X)}}}\phi(\tilde X,\tilde Y-(\tilde t-t)\tilde X,\tilde t)\biggr \}\d \tilde Y\d \tilde t\notag\\
&-\phi(X,Y,t)\notag\\
&\geq \frac {\epsilon^2}2\barint_{B_{\epsilon^3}(Y)} \barint_{t-\epsilon^2}^t \langle \nabla_X^2\phi(X,Y,t)(\frac{X_1^{\epsilon,\tilde Y,\tilde t}-X}{\epsilon}), (\frac{X_1^{\epsilon,\tilde Y,\tilde t}-X}{\epsilon})\rangle\, \d \tilde Y\d \tilde t\notag\\
&+\frac {\epsilon^2}2(X\cdot\nabla_Y-\partial_t)\phi(X,Y,t)+{o}(\epsilon^2).
\end{align}
This inequality holds for any smooth function $\phi$. Note that the reverse inequality can be derived by considering a point where $\phi$ attains its maximum. Assume that $\nabla_X\phi(X,Y,t)\neq 0$.  Then 
$$\nabla_X(\phi(X,\tilde Y -(\tilde t - t)X,\tilde t))=
\nabla_X\phi(X,\tilde Y -(\tilde t - t)X,\tilde t)-(\tilde t - t)\nabla_Y\phi(X,\tilde Y -(\tilde t - t)X,\tilde t)\neq 0$$ 
for all  $(\tilde Y,\tilde t)\in B_{\epsilon^3}(Y)\times [t-\epsilon^2,t]$ if $\epsilon$ is small enough, which implies that $X_1$ must lie on the boundary of $B_\epsilon(X)$. Furthermore, since $X_1$ is a minimum also on $\partial B_\epsilon(X)$, we can deduce that $\nabla_X(\phi(X_1,\tilde Y -(\tilde t - t)X_1,\tilde t))$ is perpendicular to $\partial B_\epsilon(X)$ at $X_1$ and points inwards. Thus,
\begin{align*}
\lim_{\epsilon\to 0}\frac{X_1^{\epsilon,\tilde Y,\tilde t}-X}{\epsilon} &= \lim_{\epsilon\to 0} -\frac {\nabla_X(\phi(X_1^{\epsilon,\tilde Y,\tilde t},\tilde Y -(\tilde t - t)X_1^{\epsilon,\tilde Y,\tilde t},\tilde t))}{|{\nabla_X(\phi(X_1^{\epsilon,\tilde Y,\tilde t},\tilde Y -(\tilde t - t)X_1^{\epsilon,\tilde Y,\tilde t},\tilde t))}|}\\
&= \lim_{\epsilon\to 0} -\frac {\nabla_X\phi(X_1^{\epsilon,\tilde Y,\tilde t},\tilde Y -(\tilde t - t)X_1^{\epsilon,\tilde Y,\tilde t},\tilde t)-(\tilde t - t)\nabla_Y\phi(X_1^{\epsilon,\tilde Y,\tilde t},\tilde Y -(\tilde t - t)X_1^{\epsilon,\tilde Y,\tilde t},\tilde t)}
{|\nabla_X\phi(X_1^{\epsilon,\tilde Y,\tilde t},\tilde Y -(\tilde t - t)X_1^{\epsilon,\tilde Y,\tilde t},\tilde t)-(\tilde t - t)\nabla_Y\phi(X_1^{\epsilon,\tilde Y,\tilde t},\tilde Y -(\tilde t - t)X_1^{\epsilon,\tilde Y,\tilde t},\tilde t)|}\\
& =-\frac {\nabla_X\phi}{|\nabla_X\phi|}(X,Y,t).
\end{align*}
As a result,
\begin{eqnarray}\label{imp+again}
\quad\quad\quad\lim_{\epsilon\to 0} \barint_{B_{\epsilon^3}(Y)} \barint_{t-\epsilon^2}^t \langle \nabla_X^2\phi(X,Y,t)(\frac{X_1^{\epsilon,\tilde Y,\tilde t}-X}{\epsilon}), (\frac{X_1^{\epsilon,\tilde Y,\tilde t}-X}{\epsilon})\rangle\, \d \tilde Y\d \tilde t=\Delta_{\infty,X}^N\phi(X,Y,t).
\end{eqnarray}
Combining \eqref{imp+again} with \eqref{impagain} and its reverse analogue, we see that
\begin{align}\label{impasdfsdafagain}
&\frac 1 2\barint_{B_{\epsilon^3}(Y)} \barint_{t-\epsilon^2}^t \biggl\{\max_{\tilde X\in\overline{{B_\epsilon(X)}}}\phi(\tilde X,\tilde Y-(\tilde t-t)\tilde X,\tilde t)\biggr \}\d \tilde Y\d \tilde t\notag\\
&+\frac 1 2\barint_{B_{\epsilon^3}(Y)} \barint_{t-\epsilon^2}^t
\biggl\{\min_{\tilde X\in\overline{{B_\epsilon(X)}}}\phi(\tilde X,\tilde Y-(\tilde t-t)\tilde X,\tilde t)\biggr \}\d \tilde Y\d \tilde t\notag\\
&-\phi(X,Y,t)\notag
\\
&= \frac {\epsilon^2}2 \mathcal{K}_\infty \phi(X,Y,t) +{o}(\epsilon^2).
\end{align}
From this, it immediately follows, similarly as in the case $p=2$, that $u$ satisfies the condition for the asymptotic mean value formula at $(X,Y,t)$ if and only if the estimates required by the definition of viscosity solutions hold at $(X,Y,t)$. The case where $\nabla_X\phi(X,Y,t)=0$ and $\nabla_X^2\phi(X,Y,t)=0$ follows exactly as in the proof of Theorem \ref{meanvalue1thm}. We omit further details.\end{proof}

Finally we state following version of Theorem \ref{meanvalue1thmmodofied}, which hence is a version of Theorem \ref{meanvalue1thm}.

\begin{theorem}\label{meanvalue1thmgaian} Let $D\subset\mathbb R^{M+1}$ be an open set and consider $p$, $1<p\leq\infty$. Let $\alpha$ and $\beta$ be defined as
in \eqref{Conv}.  Let $u\in C(D)$. The asymptotic mean value formula
\begin{align*}
u(X,Y,t)&=\frac \alpha 2\biggl\{\sup_{\tilde X\in{{B_\epsilon(X)}}}u(\tilde X,Y+{\epsilon^2}\tilde X/2,t-{\epsilon^2}/2)+\inf_{\tilde X\in{{B_\epsilon(X)}}}u(\tilde X,Y+{\epsilon^2}\tilde X/2,t-{\epsilon^2}/2)\biggr \}\notag
\\
&+\beta \barint_{B_\epsilon(X)} u(\tilde X,Y+{\epsilon^2}\tilde X/2,t-{\epsilon^2}/{2}) \d \tilde X\notag
\\
&+{o}(\epsilon^2),\mbox{ as }\epsilon\to 0,
\end{align*}
holds for every $(X,Y,t)\in D$ in the viscosity sense  if  and only if $u$ is a viscosity solution to
\begin{eqnarray}\label{solv1+ML}
  \K_p u(X,Y,t)=0\mbox { in }D.
\end{eqnarray}
\end{theorem}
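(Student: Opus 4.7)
The plan is to mirror the proofs of Theorem \ref{meanvalue1thm} and Theorem \ref{meanvalue1thmmodofied}, but replacing the average over $(\tilde Y,\tilde t)\in B_{\epsilon^3}(Y)\times(t-\epsilon^2,t)$ by evaluation at the single point $(Y+\epsilon^2\tilde X/2,\,t-\epsilon^2/2)$. The starting observation, valid for any $C^2$ function $\phi$ and any $\tilde X\in B_\epsilon(X)$, is the Taylor expansion at $(X,Y,t)$:
\begin{align*}
\phi(\tilde X,Y+\tfrac{\epsilon^2}{2}\tilde X,t-\tfrac{\epsilon^2}{2})
 &= \phi(X,Y,t)+\nabla_X\phi\cdot(\tilde X-X)+\tfrac{1}{2}\langle \nabla_X^2\phi(\tilde X-X),(\tilde X-X)\rangle \\
 &\quad +\tfrac{\epsilon^2}{2}\,\nabla_Y\phi\cdot \tilde X-\tfrac{\epsilon^2}{2}\,\partial_t\phi+ {o}(\epsilon^2),
\end{align*}
uniformly in $\tilde X$. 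The key cosmetic simplification is that $\tfrac{\epsilon^2}{2}\tilde X = \tfrac{\epsilon^2}{2}X+\tfrac{\epsilon^2}{2}(\tilde X-X)$, and the second summand is $O(\epsilon^3)$, so the $\nabla_Y$-term reduces to $\tfrac{\epsilon^2}{2}X\cdot\nabla_Y\phi+o(\epsilon^2)$.

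For the $\beta$-piece, I would average over $\tilde X\in B_\epsilon(X)$. The linear term vanishes by symmetry, the quadratic term gives $\tfrac{\epsilon^2}{2(m+2)}\Delta_X\phi$, and the transport term contributes $\tfrac{\epsilon^2}{2}(X\cdot\nabla_Y\phi-\partial_t\phi)$, yielding
\begin{align*}
\barint_{B_\epsilon(X)}\phi(\tilde X,Y+\tfrac{\epsilon^2}{2}\tilde X,t-\tfrac{\epsilon^2}{2})\,d\tilde X
 =\phi(X,Y,t)+\tfrac{\epsilon^2}{2(m+2)}\mathcal{K}_2\phi(X,Y,t)+o(\epsilon^2),
\end{align*}
which is the analogue of \eqref{meanvalue3aaaa}. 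For the $\alpha$-piece, I would pick, for each small $\epsilon$, a minimizer $X_1^\epsilon\in \overline{B_\epsilon(X)}$ of $\tilde X\mapsto \phi(\tilde X,Y+\tfrac{\epsilon^2}{2}\tilde X,t-\tfrac{\epsilon^2}{2})$, and compare the value at $X_1^\epsilon$ with that at the reflected point $2X-X_1^\epsilon\in\overline{B_\epsilon(X)}$. Adding the two Taylor expansions and using that $X_1^\epsilon$ is a minimum gives the one-sided bound
\begin{align*}
\tfrac{1}{2}\{\sup+\inf\}\phi-\phi(X,Y,t)\ge \tfrac{\epsilon^2}{2}\Bigl\langle \nabla_X^2\phi\tfrac{X_1^\epsilon-X}{\epsilon},\tfrac{X_1^\epsilon-X}{\epsilon}\Bigr\rangle+\tfrac{\epsilon^2}{2}(X\cdot\nabla_Y\phi-\partial_t\phi)+o(\epsilon^2),
\end{align*}
with the reverse inequality obtained symmetrically from a maximizer.

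When $\nabla_X\phi(X,Y,t)\neq 0$, the full gradient of $\tilde X\mapsto \phi(\tilde X,Y+\tfrac{\epsilon^2}{2}\tilde X,t-\tfrac{\epsilon^2}{2})$ equals $\nabla_X\phi+\tfrac{\epsilon^2}{2}\nabla_Y\phi$, which is nonzero for small $\epsilon$; hence $X_1^\epsilon\in\partial B_\epsilon(X)$ and the Lagrange condition forces $(X_1^\epsilon-X)/\epsilon\to -\nabla_X\phi/|\nabla_X\phi|$, so the quadratic form converges to $\Delta^N_{\infty,X}\phi(X,Y,t)$. Combining the $\alpha$- and $\beta$-expansions reproduces the identity
\begin{align*}
\tfrac{\alpha}{2}\{\sup+\inf\}\phi+\beta\,\barint_{B_\epsilon(X)}\phi\,d\tilde X-\phi(X,Y,t)=\tfrac{\epsilon^2}{2(m+p)}\mathcal{K}_p\phi(X,Y,t)+o(\epsilon^2),
\end{align*}
from which the equivalence between the asymptotic mean value property in the viscosity sense and $\mathcal{K}_pu=0$ follows by the same reasoning as in Theorem \ref{meanvalue1thm}. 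The degenerate case $\nabla_X\phi(X,Y,t)=0$ with $\nabla_X^2\phi(X,Y,t)=0$ is handled exactly as before: the quadratic term drops out and the conclusion follows from the simplified viscosity test in Lemma \ref{Vissolsimp}. The only step demanding real care is verifying that the correction $\tfrac{\epsilon^2}{2}(\tilde X-X)$ introduced in the $Y$-slot of $\phi$ really does contribute at order $o(\epsilon^2)$ uniformly in $\tilde X\in B_\epsilon(X)$ and, for the $\alpha$-piece, that the limit of the normalized minimizer is unaffected by the small perturbation $\tfrac{\epsilon^2}{2}\nabla_Y\phi$ of the gradient; both facts are immediate from $C^2$ regularity of the test function.
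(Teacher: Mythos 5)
Your proposal is correct and takes essentially the same route the paper intends: the paper's proof of this theorem simply instructs the reader to retrace the argument of Theorem \ref{meanvalue1thmmodofied}, and you have carried out precisely that retracing, correctly tracking the $\tfrac{\epsilon^2}{2}\tilde X$ shift in the $Y$-slot, its $o(\epsilon^2)$ deviation from $\tfrac{\epsilon^2}{2}X$, and the $\tfrac{\epsilon^2}{2}\nabla_Y\phi$ perturbation of the gradient in the Lagrange argument for the extremizer. The identity $\tfrac{\alpha}{2}\{\sup+\inf\}\phi+\beta\,\barint_{B_\epsilon(X)}\phi-\phi(X,Y,t)=\tfrac{\epsilon^2}{2(m+p)}\mathcal{K}_p\phi+o(\epsilon^2)$ and the handling of the degenerate case via Lemma \ref{Vissolsimp} are exactly what the paper's omitted details amount to.
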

\begin{proof} The proof follows by retracing the proof of Theorem \ref{meanvalue1thmmodofied}. We omit the details.
\end{proof}

\section{$(p,\epsilon)$-Kolmogorov functions and tug-of-war games with noise}\label{sec3}

In  Theorems \ref{meanvalue1thm}-\ref{meanvalue1thmgaian} we have established four asymptotic mean value formulas. These formulas are all slightly different but equivalent in the sense that they all characterize viscosity solutions to $\K_p u(X,Y,t)=0$. Motivated by these asymptotic mean value theorems, we next study the functions which satisfy such a mean value
property but without the correction term ${o}(\epsilon^2)$. Strictly speaking we could choose any of the formulas developed as the base for our analysis but
we here take the asymptotic mean value theorem in Theorem \ref{meanvalue1thmgaian}, without the correction term ${o}(\epsilon^2)$, as our starting point. We will call these functions $(p,\epsilon)$-Kolmogorov functions. The very existence and uniqueness of $(p,\epsilon)$-Kolmogorov functions turns out to be a subtle thing, and the complexity stems from what we should actually mean by a $(p,\epsilon)$-Kolmogorov function in terms of measurability, see Remark \ref{borelm} below. In the following, we consider $p\geq 2$ in order to be able to interpret $\alpha$ and $\beta$ as probabilities.

Geometrically, we will work in  product domains $\Omega=U_X\times\mathbb R^m\times I\subset \mathbb R^{M+1}$ and we will assume that $U_X\subset\mathbb R^m$ is a  bounded domain with $C^2$-smooth boundary. In this paper, we will not discuss the extent to which the $C^2$-smoothness can be relaxed. Given $T$, $0<T<\infty$, we let $I:=(0,T)\subset \mathbb R$.  Given $\epsilon>0$ small, we introduce
\begin{align}
    \Gamma_X^\epsilon&:=\{X\in \mathbb R^m\setminus U_X:\  d(X,\partial U_X)\leq \epsilon\},
\end{align}
where $d(\cdot,E)$ denotes the standard Euclidean distance from points in $\mathbb R^m$ to the closed set $E\subset\mathbb R^m$.
Using this notation, we let $U_X^\epsilon:=U_X\cup \Gamma_X^\epsilon$ and
$$\Gamma_\epsilon:=\Gamma_\epsilon^1\cup\Gamma_\epsilon^2,$$
where
\begin{align}
\Gamma_\epsilon^1&:=(\Gamma_X^\epsilon\times \mathbb R^m\times(-\epsilon^2/2,T]),\ \Gamma_\epsilon^2:=(U_X\times \mathbb R^m\times(-\epsilon^2/2,0]).
\end{align}

We say that $F:\Gamma_\epsilon \to \R$ belongs to the function class $\mathcal{G}_\epsilon$ if the following three conditions are met:
\begin{enumerate}
 \item $F: \Gamma_\epsilon \to \R$ is bounded.
 \item\label{cond:G_eps-two} $X \mapsto F(X,Y,t)$ is Borel measurable for every $(Y,t) \in \R^m \times (-\epsilon^2/2, T]$.
 \item\label{cond:G_eps-three} For every $t\in (-\epsilon^2/2,T]$, we have the following uniform continuity condition: For all $\eta>0$, there is a $\delta>0$ such that
 \begin{align*}
 |F(X,Y_1,t)-F(X,Y_2,t)| < \eta, \textnormal{ whenever } |Y_1 - Y_2| < \delta \textnormal{ and } (X,Y_j,t) \in \Gamma_\epsilon.
 \end{align*}
\end{enumerate}
Note that in \eqref{cond:G_eps-two}, if $t\leq 0$, the domain of the map is $U^\epsilon_X$, and if $t>0$, the domain of the map is $\Gamma^\epsilon_X$. Similarly, in \eqref{cond:G_eps-three}, if $t\leq 0$, the points $(X,Y_j,t)$ belong to $\Gamma_\epsilon$ when $X \in U^\epsilon_X$, and if $t>0$, the points $(X,Y_j,t)$ belong to $\Gamma_\epsilon$ when $X\in \Gamma^\epsilon_X$.

We say that a function $v:U^\epsilon_X\times\R^m\times (-\epsilon^2/2,T]\to \R$ belongs to the function class $\mathcal{G}_\epsilon'$ if the following three conditions are met:
\begin{enumerate}
 \item $v: U^\epsilon_X\times\R^m\times (-\epsilon^2/2,T] \to \R$ is bounded.
 \item\label{cond:G_eps'-two} $X \mapsto v(X,Y,t)$ is Borel measurable for every $(Y,t) \in \R^m \times (-\epsilon^2/2, T]$.
 \item\label{cond:G_eps'-three} For every $t\in (-\epsilon^2/2,T]$, we have the following uniform continuity condition: For all $\eta>0$, there is a $\delta>0$ such that
 \begin{align*}
 |v(X,Y_1,t)-v(X,Y_2,t)| < \eta, \textnormal{ whenever } |Y_1 - Y_2| < \delta \textnormal{ and } X \in U^\epsilon_X.
 \end{align*}
\end{enumerate}
The following result will be useful later when we relate $(p,\epsilon)$-Kolmogorov functions to the game.
\begin{lemma}\label{lem:G_eps'_is_Borel-measurable}
 Let $v\in \mathcal{G}_\epsilon'$. For every $t\in (-\epsilon^2/2,T]$, the map
 \begin{align*}
  U^\epsilon_X \times \R^m \ni (X,Y) \mapsto v(X,Y,t)
 \end{align*}
is Borel measurable.
\end{lemma}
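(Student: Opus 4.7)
The plan is to use the classical Carath\'eodory-type argument: a function that is Borel measurable in one variable and continuous (here, uniformly continuous) in the other is jointly Borel measurable. I would approximate $v(\cdot,\cdot,t)$ by a sequence of ``step'' functions that are manifestly jointly Borel measurable, and then pass to the limit.

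Fix $t\in(-\epsilon^2/2,T]$. For each $k\in\mathbb{N}$, partition $\mathbb{R}^m$ into a countable collection of pairwise disjoint half-open cubes $\{Q_n^k\}_{n\in\mathbb{N}}$ of side length $2^{-k}$, with centres $c_n^k$, and define
$$v_k(X,Y):=\sum_{n} v(X,c_n^k,t)\,\chi_{Q_n^k}(Y),\qquad (X,Y)\in U_X^\epsilon\times\mathbb{R}^m.$$
The first step is to check that each $v_k$ is Borel measurable on $U_X^\epsilon\times\mathbb{R}^m$: for any Borel $B\subset\mathbb{R}$,
$$v_k^{-1}(B)=\bigcup_{n}\Bigl(\{X\in U_X^\epsilon\,:\,v(X,c_n^k,t)\in B\}\times Q_n^k\Bigr),$$
which is a countable union of products of a Borel set in $U_X^\epsilon$ (by condition (2) in the definition of $\mathcal{G}_\epsilon'$) and a Borel set $Q_n^k\subset\mathbb{R}^m$, hence Borel in $U_X^\epsilon\times\mathbb{R}^m$.

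The second step is to pass to the limit $k\to\infty$ using the uniform continuity in $Y$ from condition (3). Given $\eta>0$, choose $\delta>0$ such that $|v(X,Y_1,t)-v(X,Y_2,t)|<\eta$ whenever $|Y_1-Y_2|<\delta$ and $X\in U_X^\epsilon$; note that this $\delta$ can be selected independently of $X$. Now choose $k$ so large that $\sqrt{m}\,2^{-k}<\delta$. For any $(X,Y)\in U_X^\epsilon\times\mathbb{R}^m$ and the unique index $n$ with $Y\in Q_n^k$, we have $|Y-c_n^k|<\sqrt{m}\,2^{-k}<\delta$, and therefore
$$|v_k(X,Y)-v(X,Y,t)|=|v(X,c_n^k,t)-v(X,Y,t)|<\eta.$$
Hence $v_k\to v(\cdot,\cdot,t)$ uniformly on $U_X^\epsilon\times\mathbb{R}^m$, and $v(\cdot,\cdot,t)$ is Borel measurable as a uniform (in particular pointwise) limit of Borel measurable functions. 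There is no real obstacle here; the only point requiring care is to keep the choice of $\delta$ uniform in $X$, which is exactly guaranteed by condition (3).
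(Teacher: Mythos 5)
Your proposal is correct and follows essentially the same route as the paper: both approximate $v(\cdot,\cdot,t)$ by functions that are constant in $Y$ on dyadic cubes of side $2^{-k}$ (you evaluate at cube centers, the paper at cube corners — an immaterial choice), verify Borel measurability of each approximant from condition (2), and then pass to the pointwise limit using the uniform-in-$X$ continuity from condition (3). No gap; the argument is complete.
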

\begin{proof}
 Let $t\in (-\epsilon^2/2,T]$ and denote $g(X,Y):= v(X,Y,t)$. Due to property \eqref{cond:G_eps'-three} in the definition of $\mathcal{G}_\epsilon'$, we can approximate $g$ pointwise with a sequence $(g_k)$ of functions that are constant in the $Y$-variable on cubes of side length $2^{-k}$. More precisely, we set
 \begin{align*}
  g_k(X,Y) = g(X, \hat Y),
 \end{align*}
where $\hat Y$ is the unique element in $2^{-k}\Z^m$ for which $Y \in \hat Y + [0, 2^{-k})^m$. Each $g_k$ is Borel measurable due to property \eqref{cond:G_eps'-two} in the definition of $\mathcal{G}_\epsilon'$. In fact, the pre-image $g_k^{-1}(W)$ of any open set $W\subset \R$ is a countable union of sets of the form $A\times Q$, where $A$ is a Borel subset of $U^\epsilon_X$ and $Q$ is a cube in $\R^m$. Thus, the pointwise limit $g$ is also a Borel function.
\end{proof}
Using the notions of $\mathcal{G}_\epsilon$ and $\mathcal{G}_\epsilon'$, we next introduce the following notion of $(p,\epsilon)$-Kolmogorov functions.
\begin{definition}\label{meanvalue1thmgaiandef} Let $p$, $2\leq p\leq\infty$. We say that $u_\epsilon: U_X^\epsilon \times \R^m \times (-\epsilon^2 / 2, T] \to \R$ is a $(p,\epsilon)$-Kolmogorov function in $U_X \times\mathbb R^m\times I$ with boundary values $F\in \mathcal{G}_\epsilon$ if $u_\epsilon\in\mathcal{G}_\epsilon'$ and
\begin{align}\label{cond:p-eps-Kolm1}
\quad u_\epsilon(X,Y,t)&=\frac \alpha 2\biggl\{\sup_{\tilde X\in{{{B_\epsilon(X)}}}}u_\epsilon(\tilde X,Y+{\epsilon^2}\tilde X/2,t-{\epsilon^2}/2)+\inf_{\tilde X\in{{{B_\epsilon(X)}}}}u_\epsilon(\tilde X,Y+{\epsilon^2}\tilde X/2,t-{\epsilon^2}/2)\biggr \}
\\
\notag &+\beta \barint_{B_\epsilon(X)} u_\epsilon(\tilde X,Y+{\epsilon^2}\tilde X/2,t-{\epsilon^2}/2) \d \tilde X\mbox{ for every $(X,Y,t)\in U_X\times\mathbb R^m\times I$},
\\
\label{cond:p-eps-Kolm2} \quad u_\epsilon(X,Y,t)&=F(X,Y,t)\mbox{ for every $(X,Y,t)\in \Gamma_\epsilon$}.
\end{align}
\end{definition}

\begin{remark}\label{borelm} Note that in Definition \ref{meanvalue1thmgaiandef}, we have the function (analogous with $\sup$ replaced by $\inf$)
\begin{align}\label{expr1}
X\to \sup_{\tilde X\in{{{B_\epsilon(X)}}}}u_\epsilon(\tilde X,Y+{\epsilon^2}\tilde X/2,t-{\epsilon^2}/2),
\end{align}
and not
\begin{align}\label{expr2}
X\to\sup_{\tilde X\in{{{B_\epsilon(X)}}}}u_\epsilon(\tilde X,Y+{\epsilon^2}X/2,t-{\epsilon^2}/2).
\end{align}
As we will see in the proof of Lemma \ref{EUKolmo}, it is easy to show that the function defined by \eqref{expr1} is Borel measurable for every fixed $(Y,t)$, and this remains true even if $u_\epsilon$ is replaced by a function which has no regularity in the $Y$-variable. Due to condition \eqref{cond:G_eps'-two} in the definition of $\mathcal{G}_\epsilon'$, one can show that \eqref{expr2} also defines a Borel function for each fixed $(Y,t)$, but this is not necessarily the case if we replace $u_\epsilon$ with a less regular function. The choice of using \eqref{expr1} in the definition of $(p,\epsilon)$-Kolmogorov functions, as well as the choice to base our analysis on Theorem \ref{meanvalue1thmgaian}, can therefore be motivated by the fact that it potentially allows generalizations of the results below for boundary data which is less regular in $Y$.
\end{remark}

\subsection{Existence and uniqueness of $(p,\epsilon)$-Kolmogorov functions} The purpose of the subsection is to prove the following lemma.

\begin{lemma}\label{EUKolmo}
Given boundary values $F\in \mathcal{G}_\epsilon$, there exists a unique $(p,\epsilon)$-Kolmogorov function in $U_X \times\mathbb R^m\times I$ in the sense of Definition \ref{meanvalue1thmgaiandef}.
\end{lemma}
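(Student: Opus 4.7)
I will construct $u_\epsilon$ by stepping forward in time in slabs of thickness $\epsilon^2/2$: the right-hand side of \eqref{cond:p-eps-Kolm1} at time $t$ involves only values at time $t-\epsilon^2/2$, so each slab is determined by the previous one, starting from the boundary data on $\Gamma_\epsilon$. This same feature also yields uniqueness at once.

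\textbf{Construction.} Set $t_k := k\epsilon^2/2$ for $k\geq 0$ and choose $N$ so that $t_N \geq T$. Put $u_\epsilon := F$ on $\Gamma_\epsilon$. Proceed inductively on $k = 0, 1, \ldots, N-1$: on $\Gamma_X^\epsilon \times \mathbb{R}^m \times (t_k, t_{k+1}]$ keep $u_\epsilon := F$, and for $(X,Y,t) \in U_X \times \mathbb{R}^m \times (t_k, t_{k+1}]$ define $u_\epsilon(X,Y,t)$ by the right-hand side of \eqref{cond:p-eps-Kolm1}. This is unambiguous because $B_\epsilon(X) \subset U_X^\epsilon$ and $t-\epsilon^2/2 \in (t_{k-1}, t_k]$ (with the convention $t_{-1}:=-\epsilon^2/2$), so the formula uses only values of $u_\epsilon$ on $U_X^\epsilon \times \mathbb{R}^m \times (t_{k-1}, t_k]$, which are either $F$ (if $k=0$) or have been defined at the previous step.

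\textbf{Preservation of $\mathcal{G}_\epsilon'$.} At each step I must check that the three conditions defining $\mathcal{G}_\epsilon'$ survive. Boundedness is automatic: because $p\geq 2$ forces $\alpha, \beta \in [0,1]$ with $\alpha+\beta = 1$, \eqref{cond:p-eps-Kolm1} is a convex combination, so $\|u_\epsilon\|_{L^\infty} \leq \|F\|_{L^\infty}$ on every new slab. For Borel measurability in $X$ at fixed $(Y,t)$, set $h(\tilde X) := u_\epsilon(\tilde X, Y+\epsilon^2\tilde X/2, t-\epsilon^2/2)$; by Lemma \ref{lem:G_eps'_is_Borel-measurable} applied to the previous slab, combined with continuity of the affine map $\tilde X\mapsto (\tilde X, Y+\epsilon^2\tilde X/2)$, $h$ is a bounded Borel function on $U^\epsilon_X$. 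The sup-term $X\mapsto \sup_{\tilde X\in B_\epsilon(X)} h(\tilde X)$ is then lower semicontinuous, since its superlevel set $\{X : \sup_{\tilde X\in B_\epsilon(X)} h(\tilde X) > c\}$ equals the open set $\bigcup\{B_\epsilon(\tilde X) : h(\tilde X) > c\}$; symmetrically the inf-term is upper semicontinuous, and the average $X\mapsto \barint_{B_\epsilon(X)} h\,d\tilde X$ is continuous by the $L^1$-continuity of translations applied to $h\,\chi_{U^\epsilon_X}$. Uniform continuity in $Y$ is inherited directly: the modulus $\delta(\eta)$ supplied by condition (3) on the previous slab controls $|h(\tilde X;Y_1)-h(\tilde X;Y_2)|$ uniformly in $\tilde X$ for $|Y_1-Y_2|<\delta$, and this estimate transfers to the sup, inf, and average.

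\textbf{Uniqueness and main difficulty.} Two $(p,\epsilon)$-Kolmogorov functions with the same boundary data coincide on $\Gamma_\epsilon$, and the same induction then forces them to coincide on each successive slab $U_X\times \mathbb{R}^m\times (t_k, t_{k+1}]$. The only real technical issue is the measurability step: it is precisely to make $X\mapsto \sup$/$\inf$/average over $B_\epsilon(X)$ of $u_\epsilon(\tilde X, Y+\epsilon^2\tilde X/2, \cdot)$ well-behaved that the joint structure captured by $\mathcal{G}_\epsilon'$, and in particular Lemma \ref{lem:G_eps'_is_Borel-measurable}, is invoked; once this is in place, the remainder of the argument is a straightforward inductive bookkeeping over the time slabs.
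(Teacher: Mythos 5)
Your proof is correct and takes essentially the same approach as the paper's: the paper phrases the construction as a fixed-point iteration $v_{i+1}=\mathcal{T}v_i$ and shows the iterates stabilize after finitely many steps (since $v_{i+1}=v_i$ for $t\leq i\epsilon^2/2$), while you make the underlying finite-propagation structure explicit from the start by defining $u_\epsilon$ directly slab by slab on time intervals of length $\epsilon^2/2$; the verification that the class $\mathcal{G}_\epsilon'$ is preserved (boundedness via the convex-combination structure for $p\geq 2$, Borel measurability of the sup/inf via open superlevel/sublevel sets, continuity of the average, and inheritance of the uniform modulus in $Y$) is identical in substance. The uniqueness argument by forward induction is likewise the same as the paper's.
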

\begin{proof} In the following, $p$, $2\leq p\leq\infty$, and $\epsilon>0$ are fixed. Given $F\in \mathcal{G}_\epsilon$, we have to prove that
there exists a unique function $u_\epsilon: U_X^\epsilon \times \R^m \times (-\epsilon^2 / 2, T] \to \R$, $u_\epsilon \in\mathcal{G}_\epsilon'$, which satisfies \eqref{cond:p-eps-Kolm1} and \eqref{cond:p-eps-Kolm2}.
 We define an operator $\mathcal{T}$ on $\mathcal{G}_\epsilon'$ in the following way. Given a function $v\in \mathcal{G}_\epsilon'$, we let
\begin{align*}
\quad \mathcal{T}v(X,Y,t)&:=\frac \alpha 2\biggl\{\sup_{\tilde X\in{{{B_\epsilon(X)}}}}v(\tilde X,Y+{\epsilon^2}\tilde X/2,t-{\epsilon^2}/2)+\inf_{\tilde X\in{{{B_\epsilon(X)}}}}v(\tilde X,Y+{\epsilon^2}\tilde X/2,t-{\epsilon^2}/2)\biggr \}\notag
\\
&+\beta \barint_{B_\epsilon(X)} v(\tilde X,Y+{\epsilon^2}\tilde X/2,t-{\epsilon^2}/2) \d \tilde X\mbox{ for every $(X,Y,t)\in U_X\times\mathbb R^m\times I$},\notag\\
\quad \mathcal{T}v(X,Y,t)&:=F(X,Y,t)\mbox{ for every $(X,Y,t)\in \Gamma_\epsilon$}.
\end{align*}
It is important  to note that $\mathcal{T}$ preserves the class $\mathcal{G}_\epsilon'$ in the sense that if $v\in \mathcal{G}_\epsilon'$, then $\mathcal{T}v\in \mathcal{G}_\epsilon'$. To see this, note first that $\mathcal{T} v$ is bounded because $v$ is bounded. Furthermore, for $(Y,t)\in \mathbb R^m\times I$ fixed, the functions
\begin{align}\label{borelma}
X\to \sup_{\tilde X\in{{{B_\epsilon(X)}}}}v(\tilde X,Y+{\epsilon^2}\tilde X/2,t-{\epsilon^2}/2),\ X\to \inf_{\tilde X\in{{{B_\epsilon(X)}}}}v(\tilde X,Y+{\epsilon^2}\tilde X/2,t-{\epsilon^2}/2),
\end{align}
are Borel measurable on $U_X$. Indeed, for every $\lambda\in\mathbb R$, the set
\begin{align*}
&\{X\in U_X:\ \sup_{\tilde X\in{{{B_\epsilon(X)}}}}v(\tilde X,Y+{\epsilon^2}\tilde X/2,t-{\epsilon^2}/2)>\lambda\}
\end{align*}
can be expressed as
\begin{align*}
U_X\cap\biggl(\ \bigcup_{\tilde X\in U_X^\epsilon,\ v(\tilde X,Y+{\epsilon^2}\tilde X/2,t-{\epsilon^2}/2)>\lambda}B_\epsilon(\tilde X)\biggr),
\end{align*}
and this set is open as the union of an arbitrary collection of open sets is open. Note that a subtle point here is that $\sup_{\tilde X\in{{{B_\epsilon(X)}}}}$ and $\inf_{\tilde X\in{{{B_\epsilon(X)}}}}$ are used instead of $\sup_{\tilde X\in{{\overline{B_\epsilon(X)}}}}$ and $\inf_{\tilde X\in{{\overline{B_\epsilon(X)}}}}$ in the definition of $(p,\epsilon)$-Kolmogorov functions. In fact, if $\overline{B_\epsilon(X)}$ is used instead of ${B_\epsilon(X)}$, then the Borel measurability of the functions in \eqref{borelma} can fail already for $Y$ and $t$ independent functions, see Example 2.4 in \cite{luirops14}. Moreover, due to the boundedness of $v$ and the continuity in $Y$, one can see that
\begin{align*}
 X \mapsto \barint_{B_\epsilon(X)} v(\tilde X,Y+{\epsilon^2}\tilde X/2,t-{\epsilon^2}/2) \d \tilde X
\end{align*}
is continuous and thus Borel measurable. These observations combined with the fact that $F$ is Borel in the $X$-variable for fixed $(Y,t)$ show that $\mathcal{T}v$ is Borel in the $X$-variable for fixed $(Y,t)$. It remains to verify that $\mathcal{T}v$ satisfies the third condition in the definition of $\mathcal{G}_\epsilon'$. For this purpose, fix $t\in (-\epsilon^2/2,T]$ and let $\eta>0$. For $Y_1,Y_2\in\R^m$, we have that if $(X,Y_1,t) \in \Gamma_\epsilon$, then also $(X,Y_2,t)\in \Gamma_\epsilon$, and since $\mathcal{T}v$ agrees with $F$ on $\Gamma_\epsilon$, we see that
\begin{align*}
 |\mathcal{T}v(X,Y_1,t) - \mathcal{T}v(X,Y_2,t)| = |F(X,Y_1,t) - F(X,Y_2,t)| < \eta,
\end{align*}
provided that $|Y_1 - Y_2| < \delta_0$, where $\delta_0>0$ is a sufficiently small constant related to $F$. If instead $(X,Y_1,t)\notin \Gamma_\epsilon$, we have $(X,Y_1,t),(X,Y_2,t) \in U_X\times\R^m\times (0,T]$. Since $v\in \mathcal{G}_\epsilon'$, there is a $\delta_1>0$ such that $|Y_1-Y_2|< \delta_1$ implies
\begin{align*}
 |v(\tilde X,Y_1,t-\epsilon^2/2) - v(\tilde X,Y_2,t-\epsilon^2/2)| < \eta/3,
\end{align*}
for all $\tilde X \in U_X^\epsilon$. From this, we obtain
\begin{align*}
 \Big|\sup_{\tilde X\in{{{B_\epsilon(X)}}}}v(\tilde X,Y_1+{\epsilon^2}\tilde X/2,t-{\epsilon^2}/2) - \sup_{\tilde X\in{{{B_\epsilon(X)}}}}v(\tilde X,Y_2+{\epsilon^2}\tilde X/2,t-{\epsilon^2}/2)\Big| \leq \eta/3,
\end{align*}
provided that $|Y_1-Y_2|<\delta_1$, and a similar estimate holds also for the infimum. Finally, observe that
\begin{align*}
 \Big|\barint_{B_\epsilon(X)} & v(\tilde X, Y_1 + {\epsilon^2}\tilde X/2, t - {\epsilon^2}/2) \d \tilde X - \barint_{B_\epsilon(X)} v(\tilde X, Y_2 + {\epsilon^2}\tilde X/2, t - {\epsilon^2}/2) \d \tilde X\Big|
 \\
 &\leq \barint_{B_\epsilon(X)} |v(\tilde X, Y_1 + {\epsilon^2}\tilde X/2, t - {\epsilon^2}/2) - v(\tilde X, Y_2 + {\epsilon^2}\tilde X/2, t - {\epsilon^2}/2) |  \d \tilde X
 \\
 & \leq \eta/3,
\end{align*}
if $|Y_1-Y_2| < \delta_1$. Thus, we see that
\begin{align*}
 |\mathcal{T}v(X,Y_1,t) - \mathcal{T}v(X,Y_2,t)| < \eta,
\end{align*}
whenever $X\in U^\epsilon_X$ and  $|Y_1-Y_2|< \delta:=\min\{\delta_0,\delta_1\}$.

Using that  $\mathcal{T}$ preserves the class $\mathcal{G}_\epsilon'$, we set up a recursive scheme as follows to construct $u_\epsilon$. Let $v_0 \in \mathcal{G}_\epsilon'$ be an arbitrary function with boundary values $F$ on $\Gamma_\epsilon$. One, for example, could take $v_0=0$ in all of $U_X\times\R^m\times(0,T]$. We construct a sequence of functions $\{v_i\}$,
$v_i \in \mathcal{G}_\epsilon'$, through  $v_{i+1}:=\mathcal{T}v_i$, $i=0,1,\ldots$. We claim that $\{v_i\}$ converges in a finite number of steps. To establish this, we argue as in the proof of Theorem 5.2 in
\cite{luirops14} and use induction to show that
\begin{align}\label{undu}
v_{i+1}(X,Y,t)-v_{i}(X,Y,t)=0\mbox{ if }t\leq i\eps^2/2.
\end{align}
In particular, $v_{i+1}(X,Y,t)$ can only differ from $v_{i}(X,Y,t)$ if $t>i\eps^2/2$. Obviously, this is clear if $i=0$, since then the operator $\mathcal{T}$ uses the values from $(-\eps^2/2,0]$ which are given by $F$. Suppose now that \eqref{undu} holds for $i\in\{0,1,..,k\}$ for some $k\geq 1$. Consider
$(X,Y,t)\in U_X \times\mathbb R^m\times I$, $t\leq (k+1)\eps^2/2$. Then
\begin{align}\label{undu+}
v_{k+2}(X,Y,t)-v_{k+1}(X,Y,t)=\mathcal{T}(\mathcal{T}v_{k})(X,Y,t)-(\mathcal{T}v_{k})(X,Y,t).
\end{align}
However, by the hypothesis, if $(X,Y,t)\in U_X\times\mathbb R^m\times I$ and $t\leq (k+1)\eps^2/2$, then
\begin{align*}
\mathcal{T}(\mathcal{T} v_{k})(X,Y,t) &:=\frac \alpha 2\biggl\{\sup_{\tilde X\in{{{B_\epsilon(X)}}}}(\mathcal{T} v_{k})(\tilde X,Y+{\epsilon^2}\tilde X/2,t-{\epsilon^2}/2)+\inf_{\tilde X\in{{{B_\epsilon(X)}}}}(\mathcal{T} v_{k})(\tilde X,Y+{\epsilon^2}\tilde X/2,t-{\epsilon^2}/2)\biggr \}\notag
\\
&+\beta \barint_{B_\epsilon(X)} (\mathcal{T} v_{k})(\tilde X,Y+{\epsilon^2}\tilde X/2,t-{\epsilon^2}/2) \d \tilde X\notag
\\
&=\frac \alpha 2\biggl\{\sup_{\tilde X\in{{{B_\epsilon(X)}}}} v_{k}(\tilde X,Y+{\epsilon^2}\tilde X/2,t-{\epsilon^2}/2)+\inf_{\tilde X\in{{{B_\epsilon(X)}}}} v_{k}(\tilde X,Y+{\epsilon^2}\tilde X/2,t-{\epsilon^2}/2)\biggr \}\notag
\\
&+\beta \barint_{B_\epsilon(X)} v_{k}(\tilde X,Y+{\epsilon^2}\tilde X/2,t-{\epsilon^2}/2) \d \tilde X\notag
\\
&=(\mathcal{T}v_{k})(X,Y,t).
\end{align*}
This proves that if $(X,Y,t)\in U_X\times\mathbb R^m\times I$ and $t\leq (k+1)\eps^2/2$, then
\begin{align}\label{undu++}
v_{k+2}(X,Y,t) - v_{k+1}(X,Y,t)=(\mathcal{T}v_{k})(X,Y,t)-(\mathcal{T}v_{k})(X,Y,t)=0.
\end{align}
Hence, by induction, we can conclude that \eqref{undu} holds for all integers $i\geq 0$. That is, the sequence of functions $(v_i)$ does not change for $i \geq 2t/\epsilon^2$. We can thus fix any such integer $i$ and define $u_\epsilon = v_i$. Then we have that $\mathcal{T} u_\epsilon = u_\epsilon$, which, by the definition of $\mathcal{T}$, proves that $u_\epsilon$ is a $(p,\epsilon)$-Kolmogorov function in $U_X \times\mathbb R^m\times I$ with boundary values $F$. Moreover, the uniqueness follows by the same induction argument, or simply from  the comparison principle for $(p,\epsilon)$-Kolmogorov functions stated in Lemma \ref{u4} below. \end{proof}

The following comparison principle for $(p,\epsilon)$-Kolmogorov functions is a consequence of the proof of Lemma \ref{EUKolmo}.

\begin{lemma}\label{u4} Let $v=v_\epsilon$ and $u=u_\epsilon$ be $(p,\epsilon)$-Kolmogorov functions in $ U_X\times\mathbb R^m\times I$ with boundary values
$F_{v}\in \mathcal{G}_\epsilon$ and $F_{u}\in \mathcal{G}_\epsilon$ on $\Gamma_\epsilon$ such that $F_{v}\geq F_{u}$. Then ${v}\geq {u}$ a.e. on
$U_X\times \mathbb R^m\times I$.
\end{lemma}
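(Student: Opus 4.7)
The plan is to extend the iterative scheme from the proof of Lemma \ref{EUKolmo} to two different boundary data simultaneously, exploiting a monotonicity property of the underlying operator.

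First I would introduce the operators $\mathcal{T}_{F_v}$ and $\mathcal{T}_{F_u}$ on $\mathcal{G}_\epsilon'$ as in the proof of Lemma \ref{EUKolmo}, associated with $F_v$ and $F_u$ respectively. The key observation to verify is a monotonicity property: if $w_1, w_2 \in \mathcal{G}_\epsilon'$ satisfy $w_1 \geq w_2$ pointwise on $U_X^\epsilon \times \mathbb{R}^m \times (-\epsilon^2/2, T]$, then
\begin{equation*}
\mathcal{T}_{F_v} w_1 \geq \mathcal{T}_{F_u} w_2 \quad \text{pointwise.}
\end{equation*}
On $\Gamma_\epsilon$ this reduces to $F_v \geq F_u$, which is the hypothesis. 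On $U_X \times \mathbb{R}^m \times I$ the operators are given by the same $\sup$/$\inf$/solid-average formula applied to $w_1$ and $w_2$ respectively, so the claim follows from the monotonicity of $\sup$, $\inf$, and averages, combined with $\alpha, \beta \geq 0$ (which relies on $p \geq 2$).

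Next I would choose a constant $M \geq \max(\|F_v\|_\infty, \|F_u\|_\infty)$ and set $v_0 \equiv M$, $u_0 \equiv -M$ on all of $U_X^\epsilon \times \mathbb{R}^m \times (-\epsilon^2/2, T]$. These belong to $\mathcal{G}_\epsilon'$ and satisfy $v_0 \geq u_0$. Defining $v_{i+1} := \mathcal{T}_{F_v} v_i$ and $u_{i+1} := \mathcal{T}_{F_u} u_i$, the monotonicity property yields, by induction on $i$, that $v_i \geq u_i$ for every $i \geq 0$.

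Finally, the stabilization argument completes the proof. After one iteration, $v_1$ agrees with $F_v$ on $\Gamma_\epsilon$ and $u_1$ with $F_u$ on $\Gamma_\epsilon$, so the induction from the proof of Lemma \ref{EUKolmo} (showing $v_{i+1} = v_i$ whenever $t \leq i\epsilon^2/2$) applies to both sequences from step one onward. Hence, for any $i \geq 1 + 2T/\epsilon^2$ one has $v_i = v$ and $u_i = u$ pointwise, and combining with $v_i \geq u_i$ gives the asserted inequality $v \geq u$ (in fact pointwise, which is stronger than a.e.). The only conceptual hurdle is the monotonicity of $\mathcal{T}$; since $\alpha, \beta \geq 0$ under the assumption $p \geq 2$, no real obstacle arises, and the hypothesis $p \geq 2$ here is seen to be essential.
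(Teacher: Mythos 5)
Your proof is correct and follows essentially the same route as the paper: both establish the comparison by combining the monotonicity of the operator $\mathcal{T}$ from the proof of Lemma \ref{EUKolmo} (valid since $\alpha,\beta\geq 0$ for $p\geq 2$) with the finite-step stabilization argument. The only cosmetic difference is that the paper starts from arbitrary $v_0,u_0$ with boundary values $F_v,F_u$ and propagates $\mathcal{T}^i v_0\geq \mathcal{T}^i u_0$ on the expanding slices $\{t\leq i\epsilon^2/2\}$, whereas you seed the iteration with the constants $\pm M$ so that $v_i\geq u_i$ holds globally at every step; both variants are valid and rely on the same two ingredients.
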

\begin{proof} Let $v_0, u_0 \in \mathcal{G}_\epsilon'$ be arbitrary functions with boundary values $F_v$ and $F_u$ respectively. Then $\mathcal{T}v_0\ge \mathcal{T}u_0$ for $t\leq \eps^2/2$ where $\mathcal{T}$ was introduced in the proof of Lemma \ref{EUKolmo}. In particular, by iterating this argument, similarly as in the proof  of Lemma \ref{EUKolmo}, we obtain the stated  comparison principle.
\end{proof}

\subsection{Tug-of-war games with noise}\label{subsec:ToW_game_descr} We here formulate an adapted two-player, zero-sum, tug-of-war game with noise, and connect associated value functions to the notion of $(p,\epsilon)$-Kolmogorov functions.  Given $\epsilon$, we let $N$ denote the maximal number of rounds the game is to be played. At the beginning of the tug-of-war game with noise, a token is placed at a point $(X_0,Y_0)\in U_X\times\mathbb R^m$ and the players toss a biased coin with probabilities $\alpha$ and $\beta$, $\alpha + \beta =1$, where $\alpha$ and $\beta$ are as previously defined in \eqref{Conv}.  If they get heads (probability $\alpha$), they play a tug-of-war game in the sense that  a fair coin is tossed and the winner of the toss is allowed to move the velocity coordinate $X_0$ of the game to any $X_1\in B_\eps (X_0)$. The position coordinate then gets updated as $Y_0\to Y_1:=Y_0+{\epsilon^2}X_1/2$, i.e.,  $(X_0,Y_0)\to (X_1,Y_1)$. On the other hand, if they get tails (probability $\beta$), the velocity coordinate $X_0$ of the game moves according to the uniform
probability density to a random point $X_1\in B_\eps(X_0)$, and again $Y_0\to Y_1:=Y_0+{\epsilon^2}X_1/2$. In the next steps, this procedure is repeated at $(X_1,Y_1)$, $(X_2,Y_2)$, and so on, and a sequence of game states $\{(X_k,Y_k)\}$ are constructed according to
\begin{equation*}
X_k\to X_{k+1}, Y_k\to Y_{k+1}:= Y_k+{\epsilon^2} X_{k+1}/2,
\end{equation*}
The game ends when either the $X$-coordinate of the token hits $\Gamma_X^\epsilon$, or the number of rounds played reaches $N$.

We denote by $\tau_N \in\{0,1,...,N\}$ the round at which either the game position reaches $\Gamma_X^\epsilon$ or the number of rounds played equals $N$, whichever happens first, and by $(X_{\tau_N},Y_{\tau_N})\in U_X^\epsilon\times \mathbb R^m$ the endpoint of the game. When no confusion arises, we simply write $\tau$.  The game procedure yields a sequence of game states $(X_0,Y_0), (X_1,Y_1),\ldots, (X_{\tau_N},Y_{\tau_N})$, where every $(X_k,Y_k)$ is a random variable. At the end of the game, Player I earns $\mathcal{F}(X_{\tau_N},Y_{\tau_N},\tau_N)$ while Player II earns $-\mathcal{F}(X_{\tau_N},Y_{\tau_N},\tau_N)$, where
\begin{align}
 \mathcal{F}&:(\Gamma_X^\epsilon\times \mathbb R^m\times\{0,1,...,N\})\cup(U_X\times \mathbb R^m\times\{N\}) \to \mathbb{R} \notag
\end{align}
is a given payoff function that is assumed to be Borel measurable in $(X,Y)$.

The history of the game states up to step $k$ is a vector of the first $k+1$ game states $(X_0,Y_0),\ldots, (X_k,Y_k)$. The space of all possible game state sequences in the case of at most $N$ rounds, and our probability space, is
\[
\begin{split}
H^{N+1}= (X_0,Y_0)\times \left(U_X^\epsilon\times\mathbb R^m\right)\times\ldots\times\left(U_X^\epsilon\times\mathbb R^m\right).
\end{split}
\]
Writing $\om=\left((X_0,Y_0),(X_1,Y_1),\ldots,(X_N,Y_N)\right)\in H^{N+1}$, we can now define $\tau_N$ as the random time variable
\begin{equation*}
\begin{split}
\tau_N(\om):=\min\{N,\inf\{k:\ (X_k,Y_k)\in \Gamma_X^\epsilon\times \mathbb R^m,\ k=0,1,,,N\}\}.
\end{split}
\end{equation*}
$\tau_N=\tau_N(\om)$ is a stopping time relative to the filtration $\{\mathcal{I}_k\}_{k=0}^N$, where
${\mathcal{I}}_0:=\sigma (X_0,Y_0)$ is the $\sigma$-algebra generated by $(X_0,Y_0)$, and
\begin{equation}\label{eq:filtration}
{\mathcal{I}}_k:=\sigma ((X_0,Y_0), (X_1,Y_1),\ldots, (X_k,Y_k))\quad \mbox{for}\quad k\geq 1.
\end{equation}

A strategy $S_{\textrm{I}} = \{S_{\textrm{I}}^k\}_{k=0}^{N}$ for Player I is a collection of  functions that give the next
game position given the history of the game. Strictly speaking, the strategy, as well as the full history, can depend on all the processes of the game, including the previous coin tosses. However, in the arguments presented below, we only use the previous game states. For example, if Player I wins the toss, then
\[
S_{\textrm{I}}^k{\left((X_0,Y_0),(X_1,Y_1),\ldots,(X_k,Y_k)\right)}=X_{k+1}\in  {B_\eps(X_k)}.
\]
Similarly, Player II  plays according to a strategy $S_{{\II}} = \{S_{\textrm{II}}^k\}_{k=0}^{N}$. To be precise, the arguments presented below only use strategies which can be represented by functions $S:  U_X\times\mathbb R^m\to  U_X^\epsilon$ such that if $(X,Y)\in U_X\times\mathbb R^m$, then $S(X,Y)\in B_\epsilon(X)$. Furthermore, concerning measurability, every map $(X,Y)\to S(X,Y)$ is assumed to be Borel measurable.

The fixed starting point $(X_0,Y_0)$, the number of rounds $N$,  the domain $U_X\times \mathbb R^m$ ($U_X^\epsilon\times\mathbb R^m$) and the strategies $S_{\textrm{I}}$ and $S_{{\II}}$ determine a unique probability measure $\mathbb{P}^{(X_0,Y_0),N}_{S_{\textrm{I}},S_{{\II}}}$ on the natural product $\sigma$-algebra. In particular, this measure is defined on the sets of the type $(X_0,Y_0)\times (X_1,Y_1)\times\ldots$, where $\{(X_i,Y_i)\}$, $(X_i,Y_i)\subset U_X^\epsilon\times\mathbb R^m$,  are Borel sets. The probability measure is  built using the initial distribution $\delta_{(X_0,Y_0)}(A_X\times A_Y)$, where $A_X\times A_Y\subset U_X\times\mathbb R^m$, and transition probabilities. Indeed, given a sequence of (random) states $\mathcal{P}_k:=(X_0,Y_0),(X_1,Y_1)\ldots,(X_k,Y_k)$, we define  a family of
transition probabilities   as
\begin{align}\label{eq:meas-steps}
\pi_{S_{\textrm{I}},S_{{\II}}}\left(\mathcal{P}_k,A_X\times A_Y\right)= \biggl (\frac{\alpha}{2} \delta_{S_{\textrm{I}}\left(\mathcal{P}_k\right)}(A_X)+\frac{\alpha}{2} \delta_{S_{{\II}}\left(\mathcal{P}_k\right)}(A_X)+\beta\frac{ |{{A}_X\cap B_\eps(X_k)}|}{|{B_\eps(X_k)}|}\biggr)\delta_{Y_{k+1}}(A_Y),
\end{align}
where, throughout, $\delta$ is the Dirac delta.  Using these transition probabilities, and for sequence of time points $\{t_k\}$, $t_{k+1}-t_k=\epsilon^2/2$,  a family of probability measures $\{\mu_{t_1,...,t_k}\}$ are built on $(U_X^\epsilon\times\mathbb R^m)^k$ satisfying the consistency condition necessary for the Kolmogorov's extension theorem. In particular, the probability measure $\mathbb{P}^{(X_0,Y_0),N}_{S_{\textrm{I}},S_{{\II}}}$
is  built by applying  Kolmogorov's extension theorem to  this family of probability measures (compare to the construction below equation (2.1) in \cite[Section 2]{manfredipr12}).

The expected payoff, when starting at $(X_0,Y_0)$, playing for at most $N$ rounds, and using the strategies $S_{\textrm{I}},S_{{\II}}$, is
\begin{equation}
\label{eq:defi-expectation}
\begin{split}
\mathbb{E}_{S_{\textrm{I}},S_{{\II}}}^{(X_0,Y_0),N}[\mathcal{F}(X_{\tau_N},Y_{\tau_N},{\tau_N})]
&=\int_{H^{N+1}} \mathcal{F}(X_{\tau_N}(\om),Y_{\tau_N}(\om),{\tau_N}(\om)) \d \mathbb{P}^{(X_0,Y_0),N}_{S_{\textrm{I}},S_{{\II}}}(\om).
\end{split}
\end{equation}
The {value of the game for Player I}, when starting at $(X_0,Y_0)$, with the maximum number of rounds $N$, is defined as
\[
u^{\eps,N}_{\textrm{I}}(X_0,Y_0,0):=\sup_{S_{\textrm{I}}}\inf_{S_{{\II}}}\,\mathbb{E}_{S_{\textrm{I}},S_{{\II}}}^{(X_0,Y_0),N}[\mathcal{F}(X_{\tau_N},Y_{\tau_N},{\tau_N})],
\]
while the {value of the game for Player II} is defined as
\[
u^{\eps,N}_{\II}(X_0,Y_0,0):=\inf_{S_{{\II}}}\sup_{S_{\textrm{I}}}\, \mathbb{E}_{S_{\textrm{I}},S_{{\II}}}^{(X_0,Y_0),N}[\mathcal{F}(X_{\tau_N},Y_{\tau_N},{\tau_N})].
\]
More generally, for $k \in\{0,1,...,N\}$ we define the values of the game for the players, when starting at $(X_0,Y_0)$, and playing for a maximum of $h=N-k$ rounds, as
\[
u^{\eps,N}_{\textrm{I}}(X_0,Y_0,k ):=\sup_{S_{\textrm{I}}}\inf_{S_{{\II}}}\,\mathbb{E}_{S_{\textrm{I}},S_{{\II}}}^{(X_0,Y_0),h}[\mathcal{F}(X_{\tau_h},Y_{\tau_h},k+{\tau_h})],
\]
and
\[
u^{\eps,N}_{\II}(X_0,Y_0,k):=\inf_{S_{{\II}}}\sup_{S_{\textrm{I}}}\,\mathbb{E}_{S_{\textrm{I}},S_{{\II}}}^{(X_0,Y_0),h}[\mathcal{F}(X_{\tau_h},Y_{\tau_h},k+{\tau_h})].
\]
Here $\tau_h\in \{0,...,h\}$ is the hitting time of the boundary
\begin{align}
(\Gamma_X^\epsilon\times \mathbb R^m\times\{0,...,N-k\})\cup (U_X\times\mathbb R^m\times\{N-k\}).
\end{align}
For basic properties  of the value functions we refer to \cite{MS}.

\subsection{Value functions and their relations  $(p,\epsilon)$-Kolmogorov functions}\label{subsec:timechange} We here describe the change of time scale that relates values of the tug-of-war games with noise and $(p,\epsilon)$-Kolmogorov functions. The definition of a  $(p,\epsilon)$-Kolmogorov function $u_\epsilon$ given in Definition \ref{meanvalue1thmgaiandef} refers to a forward-in-time parabolic equation as the  $u_\epsilon(\cdot,\cdot, t)$ is  determined by the values $u_\epsilon(\cdot,\cdot, t-\epsilon^2/2)$. In contrast, the value function for the players at step $k$ are determined by the values at future steps.

For $-{\epsilon^2}/{2} < t \leq T$,  let $N(t)$ be the integer defined by $$\frac {t}{\epsilon^2/2}\leq N(t)<\frac {t}{\epsilon^2/2}+1.$$
We use the shorthand notation $N(t)=\lceil{{t}/{(\epsilon^2/2)}}\rceil$. Set $t_0 = t$ and $t_{k+1} = t_k-{\epsilon^2/2}$ for $k = 0,1,...,N(t)-1$, that is, $$t_k = \frac {\epsilon^2}2(N(t)-k)+t_{N(t)}.$$
Observe that $t_{N(t)}\in (-\epsilon^2/2,0]$. When no confusion arises, we simply write $N$ for $N(t)$.

Given $F\in\mathcal{G}_\epsilon$ a boundary value function, we define a payoff function
$$\mathcal{F}_t:(\Gamma_X^\epsilon\times\mathbb R^m\times\{0,...,N(t)\})\cup (U_X\times\mathbb R^m\times\{N(t)\})\to\mathbb R$$ by
$$\mathcal{F}_t(X,Y,k):=F(X,Y,\epsilon^2(N(t)-k)/2+t_{N(t)})=F(X,Y,t_k),$$
and we emphasize that $t$ and $\epsilon$ determine $N$ and $t_N$. Given this notation, if the game begins at $k = 0$, this corresponds to $t_0 = t$ in the time scale. When we play one round $k\to  k + 1$, the clock steps $\epsilon^2/2$ backwards, $t_{k+1} =t_k-\epsilon^2/2$, and we play until we get outside the cylinder when $k =\tau$,  corresponding to $t_\tau$ in the time scale.

Next we define
\begin{align}\label{def:u-eps-I}
 u^\epsilon_{\textrm{I}}(X,Y,t):=u^{\epsilon,N(t)}_{\textrm{I}}(X,Y,0),\  u^\epsilon_\II(X,Y,t):=u^{\epsilon,N(t)}_\II(X,Y,0),
\end{align}
with payoff function $\mathcal{F}_t(X,Y,k)$.
This defines  $u^\epsilon_{\textrm{I}}(X,Y,t)$ and $u^\epsilon_\II(X,Y,t)$ for every instant $t\in(-{\epsilon^2}/{2}, T]$.

\subsection{The existence of a value} Given $F\in \mathcal{G}_\epsilon$,  we established in Lemma \ref{EUKolmo} the existence of a unique $(p,\epsilon)$-Kolmogorov function  $u_\epsilon$  in $U_X \times\mathbb R^m\times I$ with boundary values $F$. The following theorem shows that the $(p,\epsilon)$-Kolmogorov function coincides with the functions $u^\epsilon_I$ and $u^\epsilon_{II}$ defined in \eqref{def:u-eps-I}. Hence, the game has value, and the value is given by $u_\epsilon$.

\begin{theorem}\label{u1}
Let $F\in \mathcal{G}_\epsilon$ and $u_\epsilon$ be the unique $(p,\epsilon)$-Kolmogorov function in $U_X \times\mathbb R^m\times I$ with boundary values $F$ established in Lemma \ref{EUKolmo}. Then
\begin{align*}
 {u_{\textrm{I}}^\epsilon} = u_\epsilon = {u_{\II}^\epsilon} \quad \textnormal{ on }U_X\times\mathbb R^m\times I.
\end{align*}
\end{theorem}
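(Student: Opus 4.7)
The plan is to prove the two-sided bound $u^\epsilon_{\II}\le u_\epsilon\le u^\epsilon_{\textrm{I}}$, which, combined with the obvious $\sup\inf\le\inf\sup$ inequality $u^\epsilon_{\textrm{I}}\le u^\epsilon_{\II}$, forces all three quantities to coincide. The two nontrivial bounds will be handled symmetrically, by producing a near-optimal strategy for one of the players and running a (super)martingale argument on the process $k\mapsto u_\epsilon(X_k,Y_k,t_k)$ generated along the trajectory of the game.

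To establish $u^\epsilon_{\II}\le u_\epsilon$ at a fixed starting point $(X_0,Y_0,t)$, I would first fix $\eta>0$ and construct a near-optimizing Borel measurable strategy $S_{\II}^*=(S_{\II}^{k,*})_{k}$ for Player II. The mean value identity \eqref{cond:p-eps-Kolm1} features the infimum of $\tilde X\mapsto u_\epsilon(\tilde X,Y+\epsilon^2\tilde X/2,t-\epsilon^2/2)$ over $\tilde X\in B_\epsilon(X)$, and essentially the same argument as in the proof of Lemma \ref{EUKolmo} (using that $u_\epsilon\in\mathcal{G}_\epsilon'$ and that the ball is open, cf.\ Remark \ref{borelm}) shows that
\begin{align*}
X\mapsto \inf_{\tilde X\in B_\epsilon(X)}u_\epsilon(\tilde X,Y+\epsilon^2\tilde X/2,t-\epsilon^2/2)
\end{align*}
is Borel measurable in $X$ for each fixed $(Y,t)$. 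A standard measurable-selection procedure would then yield Borel maps $S_{\II}^{k,*}$ sending $(X,Y)$ into $B_\epsilon(X)$ for which this infimum is attained up to an error $\eta_k:=\eta\,2^{-(k+1)}$.

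Given this strategy and any Borel strategy $S_{\textrm{I}}$, the transition kernel \eqref{eq:meas-steps} combined with Kolmogorov's extension theorem produces the measure $\mathbb{P}^{(X_0,Y_0),N}_{S_{\textrm{I}},S_{\II}^*}$. Plugging \eqref{cond:p-eps-Kolm1} into the one-step conditional expectation and noting that Player I contributes at most the supremum while Player II contributes the infimum up to $\eta_k$, I expect to conclude that
\begin{align*}
M_k:=u_\epsilon(X_{k\wedge\tau},Y_{k\wedge\tau},t_{k\wedge\tau})+\frac{\alpha}{2}\sum_{j=0}^{(k\wedge\tau)-1}\eta_j
\end{align*}
is an $(\mathcal{I}_k)$-supermartingale. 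Since $\tau\le N$ is bounded and $(X_\tau,Y_\tau,t_\tau)$ always lies in $\Gamma_\epsilon$, optional stopping combined with $u_\epsilon=F=\mathcal{F}_t$ on $\Gamma_\epsilon$ will give
\begin{align*}
u_\epsilon(X_0,Y_0,t)\ge \mathbb{E}^{(X_0,Y_0),N}_{S_{\textrm{I}},S_{\II}^*}[\mathcal{F}_t(X_\tau,Y_\tau,\tau)]-\eta.
\end{align*}
Taking the supremum over $S_{\textrm{I}}$ and then using that $S_{\II}^*$ is one admissible Player II strategy, so that $\sup_{S_{\textrm{I}}}\mathbb{E}_{S_{\textrm{I}},S_{\II}^*}[\mathcal{F}_t]\ge\inf_{S_{\II}}\sup_{S_{\textrm{I}}}\mathbb{E}_{S_{\textrm{I}},S_{\II}}[\mathcal{F}_t]=u^\epsilon_{\II}(X_0,Y_0,t)$, and letting $\eta\downarrow 0$, yields the first half of the sandwich. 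The symmetric argument with a near-maximizing Player I strategy $S_{\textrm{I}}^*$ turns the analogous process into a submartingale and produces $u_\epsilon\le u^\epsilon_{\textrm{I}}$.

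The main obstacle is the measurable-selection step: the strategies $S_{\textrm{I}}^{k,*},S_{\II}^{k,*}$ must genuinely be Borel so that the transition probabilities in \eqref{eq:meas-steps} are well defined and the construction of $\mathbb{P}^{(X_0,Y_0),N}_{S_{\textrm{I}},S_{\II}}$ via Kolmogorov's theorem is justified. The open-ball formulation of \eqref{cond:p-eps-Kolm1} (cf.\ Remark \ref{borelm}) and the uniform $Y$-continuity encoded in the class $\mathcal{G}_\epsilon'$ are precisely what should make these selections feasible, but the bookkeeping of Borel sets along the game trajectory is the most delicate point of the argument.
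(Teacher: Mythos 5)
Your plan follows the paper's proof: establish the two-sided sandwich $u^\epsilon_{\II}\le u_\epsilon\le u^\epsilon_{\textrm{I}}$ via a near-optimal Borel strategy for one player, feed the mean-value identity \eqref{cond:p-eps-Kolm1} into a one-step conditional expectation bound, and close with a supermartingale plus optional stopping. Two points need attention.

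First, the supermartingale is wrongly signed. With slack $\eta_{k-1}$ at the $k$-th transition, the one-step estimate reads $\mathbb{E}[u_\epsilon(X_k,Y_k,t_k)\mid\mathcal{I}_{k-1}]\le u_\epsilon(X_{k-1},Y_{k-1},t_{k-1})+\tfrac{\alpha}{2}\eta_{k-1}$; to absorb this into a genuine supermartingale you must \emph{subtract} the accumulated errors from $u_\epsilon(X_{k\wedge\tau},Y_{k\wedge\tau},t_{k\wedge\tau})$, or add a \emph{decaying} tail such as $\eta 2^{-k}$ as the paper does. Your $M_k$ with $+\tfrac\alpha2\sum_{j=0}^{(k\wedge\tau)-1}\eta_j$ satisfies $\mathbb{E}[M_k\mid\mathcal{I}_{k-1}]\le M_{k-1}+\alpha\eta_{k-1}$, which is not the supermartingale inequality. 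The fix is small and, once made, optional stopping yields exactly the bound you assert.

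The serious gap is the measurable-selection step, which you flag as delicate and then delegate to a ``standard procedure''. There is no off-the-shelf tool that applies here directly. Knowing that $X\mapsto\inf_{\tilde X\in B_\epsilon(X)}u_\epsilon(\tilde X,Y+\epsilon^2\tilde X/2,t-\epsilon^2/2)$ is Borel in $X$ for each fixed $(Y,t)$ does not by itself produce a \emph{jointly} Borel near-minimizing selection $(X,Y)\mapsto S(X,Y)\in B_\epsilon(X)$: the constraint set is an open ball, $u_\epsilon$ is only Borel in $X$, and joint measurability is required for the kernel \eqref{eq:meas-steps} and the measure $\mathbb{P}^{(X_0,Y_0),N}_{S_{\textrm{I}},S_{\II}^*}$ to even be defined. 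The paper constructs the strategy explicitly: it first invokes Lemma 3.1 in \cite{luirops14} to obtain, for each fixed $(Y,t)$, a Borel near-minimizing map $S_Y$ in the $X$-variable, and then glues these across $Y$ by setting $S(X,Y):=S_{\hat Y}(X)$, where $\hat Y$ is the base point of the cube of side $\delta$ in $\delta\Z^m$ containing $Y$; the uniform $Y$-continuity encoded in the class $\mathcal{G}_\epsilon'$ (condition \eqref{cond:G_eps'-three}) guarantees that for small $\delta$ the glued map is still near-minimizing, and an argument as in Lemma \ref{lem:G_eps'_is_Borel-measurable} shows it is Borel. You correctly identified where the difficulty lies; the proof requires you to actually carry it out.
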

\begin{proof}  We will only prove that
\begin{align}\label{state1}{u_{\II}^\epsilon}\leq {u_\epsilon}\quad\mbox{ on }U_X\times\mathbb R^m\times I.
\end{align}
This is sufficient, as first, the proof that
\begin{align}\label{state2}{u_\epsilon}\leq {u_{\textrm{I}}^\epsilon}\quad\mbox{ on }U_X\times\mathbb R^m\times I
\end{align} is analogous, and second, it always holds that $u_{\textrm{I}}^\epsilon\leq u_{\II}^\epsilon$ by the order of the inf-sup. To start the proof of \eqref{state1}, we assume that Player I follows any strategy and that Player II follows a strategy
$S_{\II}^0$ such that at $(X_{k-1},Y_{k-1})\in U_X\times\mathbb R^m$, he chooses to step to a
point $X_k \in {B_\epsilon(X_{k-1})}$ such that
\begin{align}
u_\epsilon(X_k,Y_{k},t_{k})&=u_\epsilon(X_k,Y_{k-1}+ {\epsilon^2}X_{k}/2,t_{k})\notag\\
&\leq \inf_{\tilde X\in {B_\epsilon(X_{k-1})}} u_\epsilon(\tilde X,Y_{k-1}+ {\epsilon^2}\tilde X/2,t_k)+\eta 2^{-k},
\end{align}
for some fixed $\eta>0$, and where we have recalled that $Y_k=Y_{k-1}+{\epsilon^2}X_{k}/2$ by definition. In other words, Player II tries to almost minimize the value of $u_\epsilon(\cdot,Y_{k-1}+{\epsilon^2}\cdot/2,t_{k})$. When proving \eqref{state2},  Player I instead tries to almost maximize the value of $u_\epsilon(\cdot,Y_{k-1}+{\epsilon^2}\cdot/2,t_{k})$. This type of strategy can be implemented through a Borel measurable function $S_{\II}^0:U_X\times\mathbb R^m\to  U_X^\epsilon$ such that if $(X,Y)\in U_X\times\mathbb R^m$, then $S_{\II}^0(X,Y)\in B_\epsilon(X)$. To see this, we note that due to Lemma \ref{lem:G_eps'_is_Borel-measurable}, the map
\begin{align*}
  X \mapsto u_\epsilon(X, Y + \epsilon^2 X/2, t)
\end{align*}
is Borel measurable for fixed $(Y,t)$. Lemma 3.1 in \cite{luirops14} shows that for any fixed $(Y,t)$ and any $\lambda>0$, one can pick a Borel measurable map $S_Y : U_X \to U^\epsilon_X$ such that $S_Y(X) \in B_\epsilon(X)$ and
\begin{align*}
 u_\epsilon(S_Y(X), Y + \epsilon^2 S_Y(X)/2, t) \leq \inf_{\tilde X\in {B_\epsilon(X)}} u_\epsilon(\tilde X,Y+\epsilon^2\tilde X/2,t) + \lambda/3.
\end{align*}
Let $\delta > 0$. For every $Y\in \R^m$, we denote by $\hat Y$ the unique element in $\delta \Z^m$ for which $Y \in \hat Y + [0,\delta)^m$. Finally, define
\begin{align*}
 S(X,Y) := S_{\hat Y}(X).
\end{align*}
Reasoning as in the proof of Lemma \ref{lem:G_eps'_is_Borel-measurable}, one sees that $S$ is a Borel function. Moreover, we have that
\begin{align*}
 u_\epsilon(S(X,Y), Y + \epsilon^2 S(X,Y)/2,t) &= u_\epsilon(S_{\hat Y}(X), Y + \epsilon^2 S_{\hat Y}(X)/2,t) - u_\epsilon(S_{\hat Y}(X), \hat Y + \epsilon^2 S_{\hat Y}(X)/2,t)
 \\
 &\quad + u_\epsilon(S_{\hat Y}(X), \hat Y + \epsilon^2 S_{\hat Y}(X)/2,t)
 \\
 & \leq u_\epsilon(S_{\hat Y}(X), Y + \epsilon^2 S_{\hat Y}(X)/2,t) - u_\epsilon(S_{\hat Y}(X), \hat Y + \epsilon^2 S_{\hat Y}(X)/2,t)
 \\
 &\quad + \inf_{\tilde X\in {B_\epsilon(X)}} u_\epsilon(\tilde X,\hat Y+\epsilon^2\tilde X/2,t) + \lambda/3
 \\
 & \leq u_\epsilon(S_{\hat Y}(X), Y + \epsilon^2 S_{\hat Y}(X)/2,t) - u_\epsilon(S_{\hat Y}(X), \hat Y + \epsilon^2 S_{\hat Y}(X)/2,t)
 \\
 &\quad + \inf_{\tilde X\in {B_\epsilon(X)}} u_\epsilon(\tilde X,\hat Y+\epsilon^2\tilde X/2,t) - \inf_{\tilde X\in {B_\epsilon(X)}} u_\epsilon(\tilde X, Y+\epsilon^2\tilde X/2,t)
 \\
 & \quad +\inf_{\tilde X\in {B_\epsilon(X)}} u_\epsilon(\tilde X, Y+\epsilon^2\tilde X/2,t) + \lambda/3.
\end{align*}
Due to the uniform continuity property satisfied by $u_\epsilon$ in the $Y$ variable, we see that if we take a sufficiently small $\delta>0$ we have
\begin{align*}
 |u_\epsilon(S_{\hat Y}(X), Y + \epsilon^2 S_{\hat Y}(X)/2,t) - u_\epsilon(S_{\hat Y}(X), \hat Y + \epsilon^2 S_{\hat Y}(X)/2,t)| &< \lambda/3,
 \\
 |\inf_{\tilde X\in {B_\epsilon(X)}} u_\epsilon(\tilde X,\hat Y+\epsilon^2\tilde X/2,t) - \inf_{\tilde X\in {B_\epsilon(X)}} u_\epsilon(\tilde X, Y+\epsilon^2\tilde X/2,t)| &< \lambda/3,
\end{align*}
and thus
\begin{align*}
 u_\epsilon(S(X,Y), Y + \epsilon^2 S(X,Y)/2,t) < \inf_{\tilde X\in {B_\epsilon(X)}} u_\epsilon(\tilde X, Y+\epsilon^2\tilde X/2,t) + \lambda.
\end{align*}
In particular, we could take $\lambda = \eta 2^{-k}$ to obtain the desired strategy $S^0_{\II}$.

 To proceed, recall that $t_{k+1}=t_k-\epsilon^2/2$. We now start from the point $(X_0,Y_0,t_0)\in U_X\times\mathbb R^m\times I$ and we let $N=\lceil{{t_0}/{(\epsilon^2/2)}}\rceil$.  Then, since $u_\epsilon$ is Borel measurable in $(X,Y)$ for fixed $t$ we may estimate,
\begin{align}
&\mathbb{E}_{S_{\textrm{I}}, S^0_{\II}}^{(X_0,Y_0),N}[u_\epsilon(X_k,Y_k,t_k)+\eta 2^{-k}\,|\I_{k-1}]
\notag\\
&\leq \frac{\alpha}{2} \left\{\inf_{ \tilde X\in {B_\epsilon(X_{k-1})}}  u_\epsilon(\tilde X,Y_{k-1}+\epsilon^2\tilde X/2,t_k)+\eta 2^{-k}+\sup_{  \tilde X\in {B_\epsilon(X_{k-1})}}
 u_\epsilon(\tilde X,Y_{k-1}+\epsilon^2\tilde X/2,t_k)\right\}\notag\\
 &+ \beta \barint_{ B_{\eps}(X_{k-1})}
u_\epsilon(\tilde X,Y_{k-1}+\epsilon^2\tilde X/2,t_k) \d \tilde X+\eta 2^{-k},
\end{align}
where we have simply estimated the strategy of Player I by $\sup$ in the definition of the game.  Rewriting
the above display, we have
\begin{align}
&\mathbb{E}_{S_{\textrm{I}}, S^0_{\II}}^{(X_0,Y_0),N}[u_\epsilon(X_k,Y_k,t_k)+\eta 2^{-k}\,|\I_{k-1}]\notag\\
&\leq \frac{\alpha}{2} \left\{\inf_{ \tilde X\in {B_\epsilon(X_{k-1})}}  u_\epsilon(\tilde X,Y_{k-1}+\epsilon^2\tilde X/2,t_{k-1}-\epsilon^2/2)\right\}\notag\\
&+\frac{\alpha}{2} \left\{\sup_{  \tilde X\in {B_\epsilon(X_{k-1})}}
 u_\epsilon(\tilde X,Y_{k-1}+\epsilon^2\tilde X/2,t_{k-1}-\epsilon^2/2)\right\}\notag\\
 &+ \beta \barint_{ B_{\eps}(X_{k-1})}
u_\epsilon(\tilde X,Y_{k-1}+{\epsilon^2}\tilde X/2,t_{k-1}-\epsilon^2/2) \d \tilde X+\eta 2^{-(k-1)}.
\end{align}
Using that $u_\epsilon$ is a $(p,\epsilon)$-Kolmogorov function, we know the value of the right hand side, and we  can conclude that
\begin{align}
\mathbb{E}_{S_{\textrm{I}}, S^0_{\II}}^{(X_0,Y_0),N}[u_\epsilon(X_k,Y_k,t_k)+\eta 2^{-k}\,|\I_{k-1}]\leq u_\epsilon(X_{k-1},Y_{k-1},t_{k-1})+\eta 2^{-(k-1)}.
\end{align}
Thus,
\[
M_k:=u_\epsilon(X_k,Y_k,t_k)+\eta 2^{-k}
\] is a supermartingale with respect to the filtration $\{ {\mathcal I_k}\}_{k\geq 0}$
defined in \eqref{eq:filtration}.  Therefore,  we deduce
\[
\begin{split}
u_{\II}^\epsilon(X_0,Y_0,t_0)&= \inf_{S_{\II}}\sup_{S_{\textrm{I}}}\,\mathbb{E}_{S_{\textrm{I}},S_{{\II}}}^{(X_0,Y_0),N}[F(X_\tau,Y_\tau,t_\tau)]\\
&\leq \sup_{S_{\textrm{I}}}\,\mathbb{E}_{S_{\textrm{I}},S_{{\II}}^0}^{(X_0,Y_0),N}[F(X_\tau,Y_\tau,t_\tau)+\eta 2^{-\tau}]\\
&=\sup_{S_{\textrm{I}}}\,\mathbb{E}_{S_{\textrm{I}},S_{{\II}}^0}^{(X_0,Y_0),N}[u_\epsilon(X_\tau,Y_\tau,t_\tau)+\eta 2^{-\tau}]\\
&\leq \sup_{S_{\textrm{I}}}\,\mathbb{E}_{S_{\textrm{I}},S_{{\II}}^0}^{(X_0,Y_0),N}[M_0]
=u_\epsilon(X_0,Y_0,t_0)+\eta.
\end{split}
\]
In this deduction, we have used that $\tau$ is finite as $T$ is finite, which allowed us to use
 the optional stopping theorem for $M_{k}$. As $\eta$ is arbitrary, the proof is complete.
\end{proof}

\begin{remark}\label{DPP} Using the above results, and reconsidering the tug-of-war game, it follows that the value function for Player I satisfies
\begin{align*}
 u^{\eps,N}_{\textrm{I}}(X,Y,k) &= \frac \alpha 2\biggl\{\sup_{\tilde X\in{{{B_\epsilon(X)}}}}u^{\eps,N}_{\textrm{I}}(\tilde X,Y+\epsilon^2\tilde X/2,k+1)\biggr \}
 \\
 &+\frac \alpha 2\biggl\{\inf_{\tilde X\in{{{B_\epsilon(X)}}}}u^{\eps,N}_{\textrm{I}}(\tilde X,Y+{\epsilon^2}\tilde X/2, k+1)\biggr \}
 \\
&+\beta \barint_{B_\epsilon(X)} u^{\eps,N}_{\textrm{I}}(\tilde X,Y+{\epsilon^2}\tilde X/2, k+1) \d \tilde X,
\end{align*}
for every $(X,Y)\in U_X\times\mathbb R^m$ and $k \in\{0,1,...,N-1\}$, and
\begin{align*}
 u^{\eps,N}_{\textrm{I}}(X,Y,k)&=\mathcal{F}(X,Y,k)
\end{align*}
if $(X,Y)\in \Gamma_X^\epsilon\times\mathbb R^m$ or $k=N$. The value function for Player II, $u^{\eps,N}_{\II}$, satisfies the same statements. This is the Dynamic Programming Principle (DPP) for the tug-of-war game with a maximum number of rounds, and we  note that the expectation is
obtained by summing up the expectations of the following three possible outcomes for the next step
with the corresponding probabilities: Player I chooses the next position (probability
$\alpha/2$), Player II chooses the next (probability $\alpha/2$), or the next position is random (probability
$\beta$). To reiterate Subsection \ref{subsec:timechange}, note that while the definition of a  $(p,\epsilon)$-Kolmogorov function $u_\epsilon$ given in Definition \ref{meanvalue1thmgaiandef} refers to a forward-in-time parabolic equation as $u_\epsilon(\cdot,\cdot, t)$ is  determined by past values $u_\epsilon(\cdot,\cdot, t-\epsilon^2/2)$, in the stated DPP, the values at step $k$ are instead determined by future values at step $k + 1$.
\end{remark}

\section{The Dirichlet problem: existence and uniqueness}\label{sec4}


In this section, we want to investigate what happens to the $(p,\epsilon)$-Kolmogorov function $u_\epsilon$ in the limit $\epsilon\to 0$. In particular, we want to prove the existence of a limit function which is in fact a viscosity solution to \eqref{Dppa}.
To make this operational, we have to establish quantitative continuity estimates with constants that are independent of $\epsilon$, for $\epsilon$ small. 

We let
$$R:=\max\{1,\max_{X\in\overline{U_X}}|X|\}\,$$
and, given
$U_X\times \mathbb R^m\times I$, we introduce
\begin{align*}
\Gamma^1&:=\partial U_X\times\mathbb R^m\times[0,T),\  \Gamma^2:=(U_X\times \mathbb R^m)\times\{0\}.
\end{align*}

Given $F\in C(\Gamma^1\cup\Gamma^2)$, we are concerned with the existence of solutions to the boundary value problem
\begin{align}\label{Dppa}
\begin{cases}
\K_pu=0  ,\quad  &\textrm{for}\quad (X,Y,t)\in U_X\times \mathbb R^m\times I,\\
u   =  F
,\quad  &\textrm{for}\quad  (X,Y,t)\in \Gamma^1\cup\Gamma^2.
\end{cases}
\end{align}
The function $u\in C(\overline{U_X}\times \R^m \times [0,T)$) is said to be a viscosity solution to \eqref{Dppa} if $u$ is a viscosity solution to $\K_pu=0$ in $U_X\times\mathbb R^m\times I$ and if $u(X,Y,t)=F(X,Y,t)$ for all
$(X,Y,t)\in \Gamma^1\cup\Gamma^2$. Concerning $F$, we will initially assume $F\in\mathcal{G}_{\epsilon_0}$, for some $\epsilon_0>0$ fixed, and that
\begin{align}\label{regu}
|F(X,Y,t)-F(\hat X,\hat Y,\hat t)|\leq cd_\K((X,Y,t),(\hat X,\hat Y,\hat t))\approx cd((X,Y,t),(\hat X,\hat Y,\hat t))
\end{align}
for all $(X,Y,t),(\hat X,\hat Y,\hat t)\in \Gamma_{\epsilon_0}$, and for some constant $c$. Recall that $d_\K$ was introduced in
\eqref{e-ps.distint}, see also \eqref{e-ps.dist}, and here,
\begin{align}\label{regu+a}
d((X,Y,t),(\hat X,\hat Y,\hat t)):=|X-\hat X|+|Y-\hat Y-(\hat t-t)\hat X|^{1/3}+|\hat t-t|^{1/2}.
\end{align}
Thus, we first investigate the case where $F$ is defined on the larger set $\Gamma_{\epsilon_0}\supset \Gamma^1\cup \Gamma^2$ and where $F$ is also Lipschitz with respect to the quasi-metric $d_K$. Later, in Corollary \ref{cor:F_only_def_on_par_bdry}, we return to the case where $F$ is only defined a priori on $\Gamma^1 \cup \Gamma^2$.

Let $u_\epsilon$ be the value function of the tug-of-war game with payoff equal to $F$ on $\Gamma_\epsilon$, when $0 < \epsilon \leq \epsilon_0$. We intend to prove, conditioned on the additional regularity of $F$, that
$u_\epsilon \rightarrow u$ as $\epsilon \to 0$ and that $u$ is a viscosity solution to \eqref{Dppa}. Note that the functions $\{u_\epsilon\}$ are in  general not  continuous but, as it turns out, their discontinuities can be controlled, and we will show that the value functions are asymptotically uniformly continuous. We will use the following  lemma which is a  variant of the
classical Arzela-Ascoli's compactness lemma.

\begin{lemma}\label{Ascoli} Let $K \subset \R^{M+1}$ be compact and assume that
$$\left\{u_\eps:K \rightarrow \R,\ \eps>0\right\}$$ is a set of uniformly bounded functions which satisfies the following. Given $\eta>0$,  there exist constants $\rho_0$ and $\eps_0$ such that if  $\eps<\eps_0$, and if $(X,Y,t),(\hat X,\hat Y,\hat t)\in K$ satisfy
$$d((X,Y,t),(\hat X,\hat Y,\hat t))<\rho_0,$$
then
\[
|u_\eps(X,Y,t)-u_\eps(\hat X,\hat Y,\hat t)|<\eta.
\]
Then there exists a uniformly continuous function $v:K \rightarrow \R$ and a sequence $\epsilon_j \downarrow 0$, such that $u_{\epsilon_j} \rightarrow v$ uniformly in $K$ as $j \to \infty$.
\end{lemma}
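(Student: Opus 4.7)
The plan is a diagonal subsequence extraction on a countable dense set, followed by a covering argument using the asymptotic equicontinuity, in the spirit of Arzelà--Ascoli but adapted to the fact that the $u_\eps$ themselves need not be equicontinuous --- they are only so in the limit $\eps\to 0$. First I would fix a countable dense subset $D=\{x_k\}_{k\in\mathbb N}$ of $K$ (which exists by compactness of $K$ in the Euclidean topology, equivalent on $\mathbb R^{M+1}$ to the topology generated by $d$). Starting from any monotone sequence $\eps_j^{(0)}\downarrow 0$ of admissible values, uniform boundedness together with Bolzano--Weierstrass lets me extract successive subsequences $\{\eps_j^{(k)}\}_j\subset\{\eps_j^{(k-1)}\}_j$ along which $u_{\eps_j^{(k)}}(x_k)$ converges; the diagonal $\eps_j:=\eps_j^{(j)}$ inherits monotonicity, still satisfies $\eps_j\downarrow 0$, and defines a pointwise limit $v(x_k):=\lim_j u_{\eps_j}(x_k)$ for every $k$.

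Next I would show that $v$ is uniformly continuous on $D$ with respect to the quasi-distance $d$, and hence extends uniquely to a uniformly continuous function on $K$. Given $\eta>0$, choose $\rho_0$ and $\eps_0$ by the hypothesis applied with parameter $\eta/3$. For $x_k,x_\ell\in D$ with $d(x_k,x_\ell)<\rho_0$ and $j$ large enough that $\eps_j<\eps_0$, the asymptotic equicontinuity gives $|u_{\eps_j}(x_k)-u_{\eps_j}(x_\ell)|<\eta/3$; passing to the limit in $j$ yields $|v(x_k)-v(x_\ell)|\le\eta/3$. Density of $D$ together with uniform continuity of $v$ on $D$ then produces a unique uniformly continuous extension $v:K\to\R$.

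Finally, to upgrade pointwise convergence on $D$ to uniform convergence on $K$, given $\eta>0$ I would pick $\rho>0$ small enough that both the asymptotic equicontinuity bound for the $u_{\eps_j}$ and the modulus of continuity of $v$ yield oscillation at most $\eta/3$ on $d$-balls of radius $\rho$. By total boundedness of $K$, cover it with finitely many such balls centered at $y_1,\dots,y_N\in D$ and choose $J$ so large that for all $j\ge J$ we have $\eps_j<\eps_0$ and $|u_{\eps_j}(y_i)-v(y_i)|<\eta/3$ for every $i\in\{1,\dots,N\}$. Then for arbitrary $x\in K$ and $j\ge J$, selecting $y_i$ with $d(x,y_i)<\rho$ and using
\begin{align*}
|u_{\eps_j}(x)-v(x)|\le|u_{\eps_j}(x)-u_{\eps_j}(y_i)|+|u_{\eps_j}(y_i)-v(y_i)|+|v(y_i)-v(x)|<\eta
\end{align*}
concludes the proof. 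The main point where this differs from standard Arzelà--Ascoli is that the $u_\eps$ are only equicontinuous asymptotically as $\eps\to 0$; the key observation is that the first and third terms in the triangle decomposition only invoke the small-$\eps$ regime, so individual continuity of $u_\eps$ is never needed, and the uniformity of the limit $v$ carries the role usually played by the individual moduli.
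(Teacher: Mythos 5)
Your argument is correct and follows essentially the same route as the paper's proof: a diagonal extraction on a countable dense subset, followed by establishing uniform continuity of the pointwise limit on that set, extension by density, and then a finite covering argument to upgrade to uniform convergence. Your closing remark about why only the asymptotic (small-$\eps$) regime is invoked is a nice clarification but does not change the substance.
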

\begin{proof} Pick an arbitrary sequence $\epsilon_j \downarrow 0$ and let $\mathcal{X}\subset K$ be a dense countable set. By assumption, $\left\{u_\eps:K \rightarrow \R,\ \eps>0\right\}$ is a set of uniformly bounded functions. Hence, a diagonal procedure provides a subsequence, still denoted by $(u_{\epsilon_j})$,  that converges at every point $(X,Y,t)\in \mathcal{X}$. We let $v(X,Y,t)$ denote this limit for $(X,Y,t)\in \mathcal{X}$. The definition of $v$ shows that, given $\eta>0$,  there exist constants $\rho_0$ such that if $(X,Y,t),(\hat X,\hat Y,\hat t)\in \mathcal{X}$ satisfy
$$d((X,Y,t),(\hat X,\hat Y,\hat t))<\rho_0,$$
then
\begin{align*}
|v(X,Y,t)-v(\hat X,\hat Y,\hat t)|<\eta.
\end{align*}
We can thus, by density of $\mathcal{X}$ in $K$, extend $v$  to a uniformly continuous function on all of $K$  by defining
$$v(X,Y,t):=\lim_{(\hat X,\hat Y,\hat t)\in \mathcal{X}, (\hat X,\hat Y,\hat t)\to (X,Y,t)} v(\hat X,\hat Y,\hat t).$$
The next step is to prove that $u_{\epsilon_j}$ converges to $v$ uniformly. We choose
a finite covering
$$K\subset\cup_{i=1}^l \mathcal{B}_{r_i}(X_i,Y_i,t_i),$$
with $(X_i, Y_i, t_i) \in \mathcal{X}$ and $\epsilon_0>0$, such that
$$|u_\epsilon(X,Y,t)-u_\epsilon(X_i,Y_i,t_i)|<\eta/3,\ |v(X,Y,t)-v(X_i,Y_i,t_i)|<\eta/3,$$
for all $(X,Y,t)\in K \cap \mathcal{B}_{r_i}(X_i,Y_i,t_i)$ and $\epsilon<\epsilon_0$, and such that also
$$|u_{\epsilon_j}(X_i,Y_i,t_i)-v(X_i,Y_i,t_i)|<\eta/3,$$
for all $i \in \{1,\dots, l\}$ and $j$ sufficiently large. To obtain the last inequality, we used the fact that $l<\infty$ and the fact that $v$ is the pointwise limit of $u_{\epsilon_j}$. Thus for any
$(X,Y,t)\in K$ we can find $i$, such that $(X,Y,t)\in K \cap \mathcal{B}_{r_i}(X_i,Y_i,t_i)$ and therefore
\begin{align*}
|u_{\epsilon_j}(X,Y,t)-v(X,Y,t)|&\leq |u_{\epsilon_j}(X,Y,t)-u_{\epsilon_j}(X_i,Y_i,t_i)|+|u_{\epsilon_j}(X_i,Y_i,t_i)-v(X_i,Y_i,t_i)|\\
&+|v(X_i,Y_i,t_i)-v(X,Y,t)|<\eta,
\end{align*}
 for $j$ sufficiently large. This proves that $u_{\epsilon_j}$ converges to $v$ uniformly on $K$.
\end{proof}

In Lemma \ref{cruc} and Lemma \ref{cruc++}  below, we establish the
 uniform continuity, i.e., the assumption of Lemma \ref{Ascoli}  near $\Gamma^1$ and $\Gamma^2$, respectively. Using these lemmas and an argument based on
 Theorem \ref{u1} and the comparison principle proved in  Lemma \ref{u4}, we establish the same conclusion on compact subsets of $U_X\times\mathbb R^m\times I$.

  Recall the tug-of-war game and strategies described in Subsection \ref{subsec:ToW_game_descr}. To give a   general outline of the proofs of the lemmas, the idea is to compare the value function $u_\eps(X_0,Y_0,t_0)$, where
  $(X_0,Y_0,t_0)\in U_X\times\mathbb R^m\times I$,
   with the boundary values $F(\hat X_0,\hat Y_0,\hat t_0)$, where
  $(\hat X_0,\hat Y_0,\hat t_0)\in \mathbb R^{M+1}\setminus(U_X\times\mathbb R^m\times I)$, i.e., the values of the game starting at $(X_0,Y_0,t_0)$ and the boundary values at $(\hat X_0,\hat Y_0,\hat t_0)$, respectively, by the construction of appropriate strategies. Based on a stopping rule, basically defined as the first exit time of the game process from $U_X\times\mathbb R^m\times I$, the game will stop at some random time $\tau=\tau_{S_{\textrm{I}}, S_{\II}}$, where we indicate that the stopping time, measured in terms of the number of rounds played, is a function of the strategies of the players.

 Let $(\hat X_0,\hat Y_0,\hat t_0)$ be a point outside the domain where $u_\eps(\hat X_0,\hat Y_0,\hat t_0)=F(\hat X_0,\hat Y_0,\hat t_0)$. Let $S^0_{\textrm{I}}$ and $S^0_{\II}$ refer to some fixed strategies for Player I and Player II respectively. Then we may estimate
\begin{align*}
&u_\eps(X_0,Y_0,t_0)-F(\hat X_0,\hat Y_0,\hat t_0)\notag\\
&\geq  \inf_{S_{\II}}\mathbb{E}^{( X_0,Y_0, t_0)}_{S^0_{\textrm{I}},S_{\II}}[F(X_{\tau_{S^0_{\textrm{I}}, S_{\II}}},Y_{\tau_{S^0_{\textrm{I}}, S_{\II}}},t_0-{\tau_{S^0_{\textrm{I}}, S_{\II}}}\eps^2/2)-F(\hat X_0,\hat Y_0,\hat t_0)].
\end{align*}
Similarly,
\begin{align*}
&u_\eps(X_0,Y_0,t_0)-F(\hat X_0,\hat Y_0,\hat t_0)\notag\\
&\leq \sup_{S_{\textrm{I}}}\mathbb{E}^{( X_0, Y_0, t_0)}_{S_{\textrm{I}},S^0_{\II}}[F(X_{\tau_{S_{\textrm{I}},S^0_{\II}}},Y_{\tau_{S_{\textrm{I}},S^0_{\II}}}, t_0-{\tau_{S_{\textrm{I}},S^0_{\II}}}\eps^2/2)-F(\hat X_0,\hat Y_0,\hat t_0)].
\end{align*}
 Hence, using \eqref{regu}, we obtain
\begin{align}\label{st4}
&|u_\eps(X_0,Y_0,t_0)-F(\hat X_0,\hat Y_0,\hat t_0)|\notag\\
&\leq \sup_{S_{\II}}\mathbb{E}^{( X_0, Y_0, t_0)}_{S^0_{\textrm{I}},S_{\II}}[d((X_{\tau_{S^0_{\textrm{I}}, S_{\II}}},Y_{\tau_{S^0_{\textrm{I}}, S_{\II}}},t_0-{\tau_{S^0_{\textrm{I}}, S_{\II}}}\eps^2/2),
(\hat X_0,\hat Y_0,\hat t_0))]\notag\\
&+\sup_{S_{\textrm{I}}}\mathbb{E}^{( X_0,Y_0, t_0)}_{S_{\textrm{I}},S^0_{\II}}[d((X_{\tau_{S_{\textrm{I}},S^0_{\II}}},Y_{\tau_{S_{\textrm{I}},S^0_{\II}}},t_0-{\tau_{S_{\textrm{I}},S^0_{\II}}}\eps^2/2),
(\hat X_0,\hat Y_0,\hat t_0))],
\end{align}
where $d$ was introduced in \eqref{regu+a}. In essence, to prove estimates, we have to estimate the right hand side in \eqref{st4} by simply developing upper bounds on
\begin{align*}
&\mathbb{E}^{( X_0,Y_0,t_0)}_{S^0_{\textrm{I}},S_{\II}}[d((X_{\tau_{S^0_{\textrm{I}}, S_{\II}}},Y_{\tau_{S^0_{\textrm{I}}, S_{\II}}},t_0-{\tau_{S^0_{\textrm{I}}, S_{\II}}}\eps^2/2),
(\hat X_0,\hat Y_0,\hat t_0))],\notag\\
&\mathbb{E}^{( X_0,Y_0, t_0)}_{S_{\textrm{I}},S^0_{\II}}[d((X_{\tau_{S_{\textrm{I}},S^0_{\II}}},Y_{\tau_{S_{\textrm{I}},S^0_{\II}}},t_0-{\tau_{S_{\textrm{I}},S^0_{\II}}}\eps^2/2),
(\hat X_0,\hat Y_0,\hat t_0))],
\end{align*}
which are independent of $S_{\II}$ and $S_{\textrm{I}}$. By symmetry, it suffices to estimate one of these terms.

\begin{lemma}\label{cruc}
Given $\eta>0$, there exist $\rho_0>0$ and $\eps_1>0$ such that if $(\hat X,\hat Y,\hat t)\in\Gamma^1$,
$(X,Y,t)\in U_X\times\mathbb R^m\times I$,  $\eps<\eps_1$ and $d((X,Y,t),(\hat X,\hat Y,\hat t))<\rho_0,$ then
\[
|u_\eps(X,Y,t)-u_\eps(\hat X,\hat Y,\hat t)|<\eta.
\]
\end{lemma}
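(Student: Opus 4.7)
The plan is to follow the scheme sketched after \eqref{st4}: construct explicit strategies $S^0_{\textrm{I}}$ and $S^0_\II$ for the two players and bound the expected quasi-distance from the game's exit point to $(\hat X,\hat Y,\hat t)$. By the symmetry of the two summands in \eqref{st4}, it suffices to do this for one strategy, say $S^0_\II$. The main geometric input is the $C^2$-smoothness of $\partial U_X$, which via an exterior ball condition produces a barrier-like function near $\hat X$. For the starting point $(X_0,Y_0,t_0)$ with $d((X_0,Y_0,t_0),(\hat X,\hat Y,\hat t))<\rho_0$, let $X^{\ast}\in\partial U_X$ minimize $|X_0-X^{\ast}|$, so that $s:=|X_0-X^{\ast}|\leq \rho_0$; by the $C^2$-regularity there exist $\varrho>0$ (depending only on $U_X$) and $z\in\R^m\setminus\overline{U_X}$ with $\overline B_\varrho(z)\cap U_X=\emptyset$ and $\partial B_\varrho(z)\cap\partial U_X=\{X^{\ast}\}$.

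I will take $S^0_\II(X_k):=X_k+\epsilon(X_k-z)/|X_k-z|$, so that Player II always pulls the token radially outward from $z$. A direct Taylor expansion together with the noise identity $\mathbb{E}[|\xi|^2]=m/(m+2)$ for $\xi$ uniform on $B_1$ shows that $r_k^2:=|X_k-z|^2$ is a submartingale with drift bounded below by $c_{p,m}\epsilon^2=\bigl(\alpha+\tfrac{\beta m}{m+2}\bigr)\epsilon^2>0$ against any strategy of Player I: the radial pull from $S^0_\II$ and the worst-case radial response of Player I contribute $\pm 2\epsilon r_k+\epsilon^2$ and cancel at leading order when averaged over the fair coin, while the noise contributes a strictly positive $O(\epsilon^2)$ term. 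To turn this into a useful exit-time bound one must localize: using the $C^2$-regularity to parametrize $\partial U_X$ as a graph over its tangent hyperplane at $X^{\ast}$ on a scale $R^\ast=R^\ast(U_X)$ and combining a one-dimensional exit estimate on the outward normal coordinate with a tangential hitting-probability bound, one obtains the key estimates
\begin{align*}
\epsilon^2\mathbb{E}[\tau]\leq C\,g(\rho_0,\epsilon),\qquad \mathbb{E}[|X_\tau-X_0|^2]\leq C\,g(\rho_0,\epsilon),
\end{align*}
for some function $g(\rho_0,\epsilon)$ tending to $0$ as $\rho_0,\epsilon\to 0$.

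Given these estimates, each of the three components of $d$ can be controlled in expectation. For the time component, $|\hat t-t_\tau|\leq|\hat t-t_0|+\tfrac{\epsilon^2}{2}\tau$, so Jensen's inequality gives $\mathbb{E}[|\hat t-t_\tau|^{1/2}]\leq (|\hat t-t_0|+\tfrac{\epsilon^2}{2}\mathbb{E}[\tau])^{1/2}\leq C(\rho_0^2+g(\rho_0,\epsilon))^{1/2}$. For the velocity component, $\mathbb{E}[|X_\tau-\hat X|]\leq\mathbb{E}[|X_\tau-X_0|^2]^{1/2}+|X_0-\hat X|\leq \rho_0+C\,g(\rho_0,\epsilon)^{1/2}$. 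For the position component, the update rule $Y_{k+1}=Y_k+\tfrac{\epsilon^2}{2}X_{k+1}$ gives
\begin{align*}
Y_\tau-\hat Y-(\hat t-t_\tau)\hat X=(Y_0-\hat Y-(\hat t-t_0)\hat X)+\tfrac{\epsilon^2}{2}\sum_{k=1}^\tau(X_k-\hat X),
\end{align*}
whose absolute value is at most $\rho_0^3+R\epsilon^2 \tau$ with $R=\max_{X\in\overline{U_X}}|X|$; taking cube roots via Jensen yields $\mathbb{E}[|\,\cdot\,|^{1/3}]\leq\rho_0+C(R\,g(\rho_0,\epsilon))^{1/3}$. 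Summing all three and invoking the Lipschitz condition \eqref{regu} together with \eqref{st4}, the right-hand side of \eqref{st4} is dominated by a quantity which tends to $0$ as $\rho_0,\epsilon\to 0$. Choosing $\rho_0$ and $\epsilon_1$ small enough completes the proof.

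The hardest part is the localized exit-time estimate: without the refinement the crude submartingale bound on $r_k^2$ only yields $\epsilon^2\mathbb{E}[\tau]\leq C$ (a constant independent of $\rho_0$), which is not small enough. This is where the $C^2$-regularity of $\partial U_X$ is critical, as it provides the exterior ball that drives the radial submartingale and, more importantly, a uniform local graph representation of $\partial U_X$ that rules out sustained tangential escapes by Player I and reduces the exit problem to a tractable one-dimensional analysis on the outward normal coordinate.
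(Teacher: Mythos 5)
Your overall framework---fix one player's strategy, estimate the expected quasi-distance from the exit point via \eqref{st4}, and invoke the exterior sphere condition from $C^2$-regularity---matches the paper's. But there is a genuine error in the direction of the barrier strategy, and you are missing the key external ingredient that makes the exit-time estimate tractable.

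First, the direction. You choose $S^0_\II(X_k) := X_k + \epsilon(X_k - z)/|X_k - z|$ with $z \in \R^m\setminus\overline{U_X}$ the exterior ball center, i.e.\ you push the token \emph{away} from $z$. Since $z$ lies just outside $\partial U_X$ near $\hat X$, the direction $(X_k - z)/|X_k - z|$ at a point $X_k\in U_X$ points into the interior, so this strategy drives the game state \emph{away} from $\hat X$ and away from the exit. This is the reverse of what the lemma requires: the cooperating player must push the token out of $U_X$ near $\hat X$ so that the payoff is close to $F(\hat X,\cdot,\cdot)$. The paper has Player~I ``consistently trying to pull the game towards $Z$,'' making $|X_k - Z|$ (minus an accumulating correction) a \emph{super}martingale, which steers the game toward the boundary sphere $\partial B_\delta(Z)\ni\hat X$. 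With your direction, the adversary can simply cooperate and the token wanders deep into $U_X$; the bound $\mathbb{E}[|X_\tau-X_0|^2]\leq Cg(\rho_0,\epsilon)$ you assert has no reason to hold.

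Second, even with the direction corrected, the place where you openly flag difficulty---converting the crude $\epsilon^2\mathbb{E}[\tau]\leq C$ into something that vanishes as $\rho_0\to 0$---is precisely where the paper's argument is different from yours, and simpler. The paper does not localize to a graph over the tangent plane and does not attempt a tangential hitting-probability bound. Instead, it compares $\tau$ to the exit time $\tau^\ast$ of the pure random walk from the \emph{annulus} $B_{R_0}(Z)\setminus\overline{B_\delta(Z)}$, using that the pulling strategy can only shorten the game, and then cites the elliptic exit-time estimate (Lemma~14 of \cite{manfredipr10c}) to obtain $\epsilon^2\mathbb{E}[\tau]\leq c(R_0/\delta)\,\dist(\partial B_\delta(Z),X_0)+o(1)$. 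That estimate encodes exactly the Brownian-motion fact that the expected exit time of a martingale from an interval is proportional to the distance to the near endpoint, without ever needing to separate normal and tangential coordinates or control ``sustained tangential escapes.'' Your proposed normal/tangential decomposition is both unnecessary and unsupported: a random walk confined to a slab can accumulate tangential displacement of order $\sqrt{\mathbb{E}[\tau]}\,\epsilon\sim\sqrt{\dist}$ before exiting, and nothing in the sketch explains why this is compatible with $\mathbb{E}[|X_\tau - X_0|^2]\lesssim g(\rho_0,\epsilon)$. The paper circumvents the issue by estimating the distance to the fixed reference point $Z$ (via the supermartingale for $|X_k-Z|$ together with the exit-time bound), rather than the distance to $X_0$.

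In short: reverse the strategy so that the cooperating player pulls \emph{toward} the exterior ball center, replace the normal/tangential localization by a comparison to the random-walk exit time from an annulus, and invoke the annulus exit-time estimate of Lemma~14 in \cite{manfredipr10c}. With those changes your outline aligns with the paper's proof.
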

\begin{proof} Let $(\hat X,\hat Y,\hat t)\in\Gamma^1=(\partial U_X\times\mathbb R^m\times[0,T))$. We first note that \[
|u_{\eps}(X,Y,t)-u_{\eps}(\hat X,\hat Y,\hat t)|\leq |u_{\eps}(X,Y,t)-u_{\eps}(\hat X,Y,t)|+|u_{\eps}(\hat X,Y,t)-u_{\eps}(\hat X,\hat Y,\hat t)|,
\]
and that
\begin{align*}
|u_{\eps}(\hat X,Y,t)-u_{\eps}(\hat X,\hat Y,\hat t)|&=|F(\hat X,Y,t)-F(\hat X,\hat Y,\hat t)|\notag\\
&\leq c(|Y-\hat Y-(\hat t-t)\hat X|^{1/3}+|\hat t-t|^{1/2})<\eta,
\end{align*}
 if $\rho_0$ is small enough, using the uniform continuity of the boundary data $F$. Hence, we only have to estimate
$|u_{\eps}(X,Y,t)-u_{\eps}(\hat X, Y,t)|$ and we let $(X_0,Y_0,t_0):=(X,Y,t)$. Recall that
$$u_{\eps}(X_0,Y_0,t_0)= \sup_{S_{\textrm{I}}}\inf_{S_{{\II}}}\,\mathbb{E}_{S_{\textrm{I}},S_{{\II}}}^{(X_0,Y_0,t_0)}[F(X_\tau,Y_\tau,t_\tau)].$$
Using the regularity of $U_X$, we can conclude that
$U_X$ satisfies the uniform exterior sphere condition, i.e., $\hat X\in \partial B_\delta(Z)$ for some $B_\delta(Z)\subset \R^m\setminus U_X$ and for some $\delta\in (0,1)$ independent of $\hat X$. 
We now start the game at $(X_0,Y_0,t_0)$. Player I uses a strategy $S^0_\text{I}$ which implies that Player I is consistently trying to pull the game towards $Z$ and and Player II uses a strategy $S_\text{II}$. We first want to estimate
\begin{align*}
\mathbb{E}& ^{(X_0,Y_0,t_0)}_{S^0_\text{I},S_\text{II}}[d((X_k,Y_k,t_k),(Z,Y,t))|\mathcal{F}_{k-1}].
\end{align*}
Let $w_k:=(X_{k-1}-Z)/|Z-X_{k-1}|$.  Note that
$$X_{k-1}-Z-\epsilon w_k=w_k(|Z-X_{k-1}|-\epsilon).$$
Using this observation, the above, and the construction of the game, which at $X_{k-1}$ implies that if Player I is to modify the game then he adds  vector $-\epsilon w_k$, to the current game velocity state, so that $|X_{k-1}-\epsilon w_k-Z| = |X_{k-1}-Z|-\epsilon$ and we deduce
\begin{align*}
\mathbb{E}^{(X_0,Y_0,t_0)}_{S^0_\text{I},S_\text{II}}[|X_k-Z||\mathcal{F}_{k-1}]&\leq \frac \alpha 2\biggl (|X_{k-1}-Z|-\epsilon+|X_{k-1}-Z|+\epsilon\biggr)+\beta(|X_{k-1}-Z|+c\epsilon)\notag\\
&\leq |X_{k-1}-Z|+c\epsilon.
\end{align*}
By construction,
\begin{align*}
\mathbb{E}^{(X_0,Y_0,t_0)}_{S^0_\text{I},S_\text{II}}[|Y_{k}-Y-(t-t_{k})Z||\mathcal{F}_{k-1}]&=|Y_{k-1}+{\epsilon^2} X_{k}/2-Y-(t-t_{k-1}+{\epsilon^2}/2)Z|\\
&\leq |Y_{k-1}-Y-(t-t_{k-1})Z|+{\epsilon^2}|X_{k}-Z|/2\\
&\leq |Y_{k-1}-Y-(t-t_{k-1})Z|+c\epsilon^2R.
\end{align*}
In particular, if we let
$$M_k^X:=|X_k-Z|-ck\epsilon,\ M_k^Y:=|Y_{k}-Y-(t-t_{k})Z|-ck{\epsilon^2}R, $$
then our conclusions can be stated
$$\mathbb{E}^{(X_0,Y_0,t_0)}_{S^0_\text{I},S_\text{II}}[M_k|\mathcal{F}_{k-1}]\leq M_{k-1},$$
where either $M_k=M_k^X$ or  $M_k^Y$, i.e., both $M_k^X$ and $M_k^Y$ are supermartingales. Using the optional stopping theorem and H{\"o}lder's inequality, we can therefore conclude that
\begin{align*}
\mathbb{E}^{(X_0,Y_0,t_0)}_{S^0_\text{I},S_\text{II}}[d((X_\tau,Y_\tau,t_\tau),(Z,Y,t))]&\leq \bigl(|X_0-Z|+c\epsilon\mathbb{E}^{(X_0,Y_0,t_0)}_{S^0_\text{I},S_\text{II}}[\tau]\bigr )\\
&+\bigl(|Y_{0}-Y-(t-t_{0})Z|+c{\epsilon^2}R\mathbb{E}^{(X_0,Y_0,t_0)}_{S^0_\text{I},S_\text{II}}[\tau]\bigr)^{1/3}\\
&+\epsilon\bigl(\mathbb{E}^{(X_0,Y_0,t_0)}_{S^0_\text{I},S_\text{II}}[\tau]\bigr)^{1/2}.
\end{align*}
Hence, using that
 $(a+b)^{1/q}\leq a^{1/q}+b^{1/q}$ whenever $a$ and $b$ are non-negative real numbers, for all $q\geq 1$, we have
 \begin{align*}
\mathbb{E}^{(X_0,Y_0,t_0)}_{S^0_\text{I},S_\text{II}}[d((X_\tau,Y_\tau,t_\tau),(Z,Y,t))]&\leq
|X-Z|+c\bigl(\epsilon^2\mathbb{E}^{(X_0,Y_0,t_0)}_{S^0_\text{I},S_\text{II}}[\tau]\bigr )^{1/2}\\
&+c\bigl(\epsilon^2R\mathbb{E}^{(X_0,Y_0,t_0)}_{S^0_\text{I},S_\text{II}}[\tau]\bigr )^{1/3}.
\end{align*}
We need to estimate $\mathbb{E}^{(X_0,Y_0,t_0)}_{S^0_\text{I},S_\text{II}}[\tau]=\mathbb{E}^{(X_0,Y_0,t_0)}_{S^0_\text{I},S_\text{II}}[\tau_{S^0_\text{I},S_\text{II}}]$. Let
$\hat \tau=\hat\tau_{S^0_\text{I},S_\text{II}}$ be the corresponding stopping time in the case $U_X\times\mathbb R^m$ is replaced by $(B_{R_0}(Z)\setminus\overline{B_\delta(Z)})\times \mathbb R^m$,  where $R_0>0$ is chosen so that $U_X\subset B_{R_0}(Z)$. Here it is understood that the strategy $S^0_\text{I}$ has been extended so that it still pulls the token towards $Z$, and the extension of $S_\text{II}$ is arbitrary. Then $\tau=\tau_{S^0_\text{I},S_\text{II}}\leq \hat \tau_{S^0_\text{I},S_\text{II}}=\hat\tau$ is a conservative upper bound. Next, we note that if a pure tug-of-war game was played instead of a tug-of-war game with noise,
then we would have
\begin{align}\label{esta1}
\mathbb{E}^{(X_0,Y_0,t_0)}_{S^0_\text{I},S_\text{II}}[|X_k-Z||\mathcal{F}_{k-1}]&\leq |X_{k-1}-Z|,
\end{align}
while if a pure random walk occurred, then, in this case, the strategies lack impact,
\begin{align}\label{esta2}
\mathbb{E}^{(X_0,Y_0,t_0)}_{S^0_\text{I},S_\text{II}}[|X_k-Z||\mathcal{F}_{k-1}]&=\barint_{B_\epsilon(X_{k-1})} |X-Z|\, \d X\notag\\
&=\barint_{B_\epsilon(0)} |X+(X_{k-1}-Z)|\, \d X\geq |X_{k-1}-Z|.
\end{align}
Let $\tau^\ast$ be the first exit time from $(B_{R_0}(Z)\setminus\overline{B_\delta(Z)})\times \mathbb R^m$ of the random walk process. We claim that
\eqref{esta1} and \eqref{esta2} imply that
\begin{align*}
\mathbb{E}^{(X_0,Y_0,t_0)}_{S^0_\text{I},S_\text{II}}[\hat\tau_{S^0_\text{I},S_\text{II}}]\leq \mathbb{E}^{(X_0,Y_0,t_0)}_{S^0_\text{I},S_\text{II}}[\tau^\ast].
\end{align*}
Therefore, to estimate $\mathbb{E}^{(X_0,Y_0,t_0)}_{S^0_\text{I},S_\text{II}}[\tau]$ we can use a conservative and simply bound $\mathbb{E}^{(X_0,Y_0,t_0)}_{S^0_\text{I},S_\text{II}}[\tau^\ast]$. To do this, we can immediately reuse the parabolic result, and in particular the elliptic estimate stated in Lemma 14 in \cite{manfredipr10c}, to conclude the conservative estimate
\begin{equation*}
\mathbb{E}^{(X_0,Y_0,t_0)}_{S^0_\text{I},S_\text{II}}[\tau]\leq \min\{\frac{c(R_0/\delta)\dist(\partial B_\delta (Z),X_0)+o(1)}{\eps^2},N\},
\end{equation*}
where $R_0>0$ is chosen so that $U_X\subset B_{R_0}(Z)$, and $o(1)\rightarrow 0$ when $\eps\rightarrow 0$. Put together, noting that $\dist(\partial B_\delta (Z),X_0)\leq |X-\hat X|$, we have
\begin{align*}
\mathbb{E}^{(X_0,Y_0,t_0)}_{S^0_\text{I},S_\text{II}}[d((X_\tau,Y_\tau,t_\tau),(Z,Y,t))]&\leq
|X-Z|+\bigl(\min\{cR(c(R_0/\delta)|X-\hat X|+o(1)),\epsilon^{2}N\}\bigr )^{1/3},
\end{align*}
if $c(R_0/\delta)|X-\hat X|$ is small. To use the estimate, we note that
\begin{align*}
u_{\eps}(X,Y,t)-u_{\eps}(\hat X,Y,t) &\geq -c\delta+\sup_{S_{\textrm{I}}}\inf_{S_{{\II}}}\,\mathbb{E}_{S_{\textrm{I}},S_{{\II}}}^{(X_0,Y_0,t_0)}[F(X_\tau,Y_\tau,t_\tau)-F(Z, Y,t)]
\\
&\geq -c\delta+\inf_{S_{{\II}}}\,\mathbb{E}_{S^0_{\textrm{I}},S_{{\II}}}^{(X_0,Y_0,t_0)}[F(X_\tau,Y_\tau,t_\tau)-F(Z, Y,t)]
\\
&\geq -c\delta-c\inf_{S_{{\II}}}\,\mathbb{E}_{S^0_{\textrm{I}},S_{{\II}}}^{(X_0,Y_0,t_0)}[d((X_\tau,Y_\tau,t_\tau),(Z,Y,t))]
\\
&\geq -c\delta-c\bigl(\min\{cR(c(R_0/\delta)|X-\hat X|+o(1)),\epsilon^{2}N\}\bigr )^{1/3}.
\end{align*}
Similarly, interchanging the roles of Player I and Player II, we deduce that
\begin{align*}
u_{\eps}(X,Y,t)-u_{\eps}(\hat X,Y,t)&\leq c\delta+c\bigl(\min\{cR(c(R_0/\delta)|X-\hat X|+o(1)),\epsilon^{2}N\}\bigr )^{1/3},
\end{align*}
and, hence,
\begin{align*}
|u_{\eps}(X,Y,t)-u_{\eps}(\hat X,Y,t)|&\leq c\delta+c\bigl(\min\{cR(c(R_0/\delta)|X-\hat X|+o(1)),\epsilon^{2}N\}\bigr )^{1/3}.
\end{align*}
Note that $\epsilon^{2}N\leq 2T$. Let $\eta>0$ be small, and let $\delta=\eta/(16c)$, $\rho_0=\eta^4/(16c^4RR_08^3)$. Then, assuming $d((X,Y,t),(\hat X,\hat Y,\hat t))<\rho_0$,
\begin{align*}
|u_{\eps}(X,Y,t)-u_{\eps}(\hat X,Y,t)|&\leq \frac \eta 8+\frac \eta 8+(o(1))^{1/3}<\eta,
\end{align*}
if $\epsilon<\epsilon_1$, and the proof is complete in this case. \end{proof}

\begin{lemma}\label{cruc++}
Given $\eta>0$, there exist $\rho_0>0$ and $\eps_1>0$ such that if $(\hat X,\hat Y,\hat t)\in\Gamma^2$,
$(X,Y,t)\in U_X\times\mathbb R^m\times I$,  $\eps<\eps_1$ and $d((X,Y,t),(\hat X,\hat Y,\hat t))<\rho_0,$ then
\[
|u_\eps(X,Y,t)-u_\eps(\hat X,\hat Y,\hat t)|<\eta.
\]
\end{lemma}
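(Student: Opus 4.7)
The plan is to follow the general strategy-comparison scheme used in the proof of Lemma \ref{cruc}, but with a quadratic (rather than linear) supermartingale tailored to the fact that the game here is forced to run for up to $N=\lceil 2t/\epsilon^2\rceil$ rounds. Fix $(\hat X,\hat Y,0)\in\Gamma^2$ and $(X,Y,t)\in U_X\times\R^m\times I$ with $d((X,Y,t),(\hat X,\hat Y,0))<\rho_0$, so that $|X-\hat X|<\rho_0$, $|Y-\hat Y+t\hat X|<\rho_0^3$ and $t<\rho_0^2$. Since $N(0)=0$, one has $u_\epsilon(\hat X,\hat Y,0)=F(\hat X,\hat Y,0)$. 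By Theorem \ref{u1}, fixing a Player II strategy $S^0_{\II}$ gives
\begin{equation*}
u_\epsilon(X,Y,t)-F(\hat X,\hat Y,0)\leq c\,\sup_{S_{\textrm{I}}}\mathbb{E}^{(X,Y,t)}_{S_{\textrm{I}},S^0_{\II}}\bigl[d((X_\tau,Y_\tau,t_\tau),(\hat X,\hat Y,0))\bigr],
\end{equation*}
by \eqref{regu} (both evaluation points lie in $\Gamma_{\epsilon_0}$), and a symmetric lower bound follows through an analogous Player I strategy $S^0_{\textrm{I}}$.

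The deterministic displacement bound $\tau\epsilon\leq 2t/\epsilon+\epsilon$ blows up when $\epsilon\ll\rho_0$, so the linear supermartingale of Lemma \ref{cruc} is insufficient here. Instead take $S^0_{\textrm{I}}=S^0_{\II}=S^0$, where $S^0(X):=X-\epsilon(X-\hat X)/|X-\hat X|$ when $|X-\hat X|\geq\epsilon$, and $S^0(X):=\hat X$ otherwise. This is Borel measurable in $(X,Y)$ and takes values in $B_\epsilon(X)\cap U_X^\epsilon$, using the elementary fact that $B_\epsilon(X_{k-1})\subset U_X^\epsilon$ whenever $X_{k-1}\in U_X$ (any point within $\epsilon$ of a point in $U_X$ lies either in $U_X$ or within $\epsilon$ of $\partial U_X$). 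Setting $R_k:=|X_k-\hat X|$ and expanding
\begin{equation*}
R_k^2=R_{k-1}^2+2(X_{k-1}-\hat X)\cdot(X_k-X_{k-1})+|X_k-X_{k-1}|^2,
\end{equation*}
the committed move contributes $(X_{k-1}-\hat X)\cdot(S^0(X_{k-1})-X_{k-1})=-\epsilon R_{k-1}$ when $R_{k-1}\geq\epsilon$, while the opposing move contributes at most $+\epsilon R_{k-1}$ by Cauchy-Schwarz; weighted by $\alpha/2$ each, these drifts cancel, the $\beta$-noise contributes zero drift, and the quadratic term is $\leq\epsilon^2$. Hence $\mathbb{E}[R_k^2\mid\mathcal{F}_{k-1}]\leq R_{k-1}^2+C\epsilon^2$, so $R_k^2-Ck\epsilon^2$ is a supermartingale. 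Optional stopping with $\tau\leq N$ and Jensen's inequality then give $\mathbb{E}[R_\tau]\leq\sqrt{\rho_0^2+2Ct+C\epsilon^2}\leq C(\rho_0+\epsilon)$, uniformly in the opposing strategy.

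For the remaining contributions to $d$, the plan is to exploit the identity
\begin{equation*}
Y_\tau-\hat Y+t_\tau\hat X=(Y-\hat Y+t\hat X)+\tfrac{\epsilon^2}{2}\sum_{k=1}^\tau(X_k-\hat X),
\end{equation*}
derived from $Y_k=Y_{k-1}+\epsilon^2 X_k/2$ and $t_\tau=t-\tau\epsilon^2/2$. Combined with the step-wise estimate $\mathbb{E}[R_k]\leq C(\rho_0+\sqrt{k}\,\epsilon)$ from the previous paragraph, the bound $\sum_{k=1}^N\sqrt{k}\leq CN^{3/2}$, and $\epsilon^2 N\leq 2t+\epsilon^2$, this yields $\mathbb{E}[|Y_\tau-\hat Y+t_\tau\hat X|]\leq C(\rho_0^2+\epsilon^2)$, and hence by Jensen $\mathbb{E}[|Y_\tau-\hat Y+t_\tau\hat X|^{1/3}]\leq C(\rho_0^{2/3}+\epsilon^{2/3})$. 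Since $|t_\tau|\leq\max(t,\epsilon^2/2)$, also $\mathbb{E}[|t_\tau|^{1/2}]\leq C(\rho_0+\epsilon)$. Summing the three terms, $\mathbb{E}[d]\leq C(\rho_0^{2/3}+\epsilon^{2/3})$, which falls below $\eta/c$ once $\rho_0$ and $\epsilon_1$ are chosen small enough.

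The hardest part will be verifying the quadratic supermartingale bound in the degenerate regime $R_{k-1}<\epsilon$, where $S^0$ jumps directly to $\hat X$ and the drift in the $(X_{k-1}-\hat X)$-direction no longer cancels exactly. A direct computation nevertheless shows $\mathbb{E}[R_k^2\mid\mathcal{F}_{k-1}]\leq C\epsilon^2$ in this regime (even without any $R_{k-1}^2$ on the right), so the inequality $\mathbb{E}[R_k^2\mid\mathcal{F}_{k-1}]\leq R_{k-1}^2+C\epsilon^2$ persists throughout with an enlarged constant $C$. A secondary technical point is the admissibility of $S^0$ as a Borel-measurable map into $B_\epsilon(X)\cap U_X^\epsilon$, possibly when $U_X$ is non-convex; this follows from the definition together with the inclusion $B_\epsilon(X)\subset U_X^\epsilon$ noted above.
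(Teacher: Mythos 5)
Your proposal is correct and follows essentially the same route as the paper: fix a pulling strategy toward $\hat X$, run a quadratic supermartingale for $|X_k-\hat X|^2$, and exploit the deterministic bound $\tau\le N\le 2t/\epsilon^2+1$ so that $\epsilon^2\mathbb{E}[\tau]\lesssim t+\epsilon^2$. Your observation that a linear supermartingale $|X_k-\hat X|-ck\epsilon$ would give a useless $\epsilon\mathbb{E}[\tau]\sim t/\epsilon$ bound is accurate, and the quadratic version (with the drift cancellation argument and the check in the regime $R_{k-1}<\epsilon$) is precisely what underlies the paper's displayed estimate $\bigl(|X-\hat X|^2+c\epsilon^2\mathbb{E}[\tau]\bigr)^{1/2}$, even though the paper's intermediate display in Lemma~\ref{cruc} writes the linear version. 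The only cosmetic divergence is in the $Y$-term: you unfold the telescoping identity $Y_\tau-\hat Y+t_\tau\hat X=(Y-\hat Y+t\hat X)+\tfrac{\epsilon^2}{2}\sum_{k\le\tau}(X_k-\hat X)$ and sum per-step moments of $R_k$, whereas the paper reuses the supermartingale $|Y_k-\hat Y+t_k\hat X|-ck\epsilon^2 R$ with the crude bound $|X_k-\hat X|\le 2R$; both yield the same order of magnitude and close the estimate.
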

\begin{proof} Let $(\hat X,\hat Y,\hat t)\in\Gamma^2=(U_X\times \mathbb R^m)\times\{0\}$.  Let  $(X_0,Y_0,t_0):=(X,Y,t)$. We start the game at $(X_0,Y_0,t_0)$  and we fix for Player I a strategy $S^0_\text{I}$ which in this case implies that Player I is consistently trying to pull the game towards $\hat X$. Player II uses a strategy $S_\text{II}$. Arguing as in Lemma \ref{cruc} we deduce that
\begin{align*}
\mathbb{E}^{(X_0,Y_0,t_0)}_{S^0_\text{I},S_\text{II}}[d((X_\tau,Y_\tau,t_\tau),(\hat X,\hat Y,\hat t))]&\leq \bigl(|X-\hat X|^2+c\epsilon^2\mathbb{E}^{(X_0,Y_0,t_0)}_{S^0_\text{I},S_\text{II}}[\tau]\bigr )^{1/2}\\
&+\bigl(|Y-\hat Y-(\hat t-t)\hat X|+c{\epsilon^2}R\mathbb{E}^{(X_0,Y_0,t_0)}_{S^0_\text{I},S_\text{II}}[\tau]\bigr)^{1/3}\\
&+\epsilon\bigl(\mathbb{E}^{(X_0,Y_0,t_0)}_{S^0_\text{I},S_\text{II}}[\tau]\bigr)^{1/2}.
\end{align*}
Since the stopping time is bounded by $t/(\epsilon^2/2) + 1$, we obtain
\begin{align*}
\mathbb{E}^{(X_0,Y_0,t_0)}_{S^0_\text{I},S_\text{II}}[d((X_\tau,Y_\tau,t_\tau),(\hat X,\hat Y,\hat t))]&\leq \bigl(|X-\hat X|^2+c(t+\epsilon^2)\bigr )^{1/2}\\
&+\bigl(|Y-\hat Y-(\hat t-t)\hat X|+c(t+\epsilon^2)\bigr)^{1/3}+\bigl(t+\epsilon^2\bigr)^{1/2}\notag\\
&\leq c|X-\hat X|+c|Y-\hat Y-(\hat t-t)\hat X|^{1/3}+c(t^{1/3}+\epsilon^{2/3}).
\end{align*}
Using this, and arguing as in the final part of the proof of Lemma \ref{cruc}, we deduce
\begin{align*}
|u_{\eps}(X,Y,t)-u_{\eps}(\hat X,\hat Y,\hat t)|&\leq c(|X-\hat X|+|Y-\hat Y-(\hat t-t)\hat X|^{1/3}+t^{1/3}+\epsilon^{2/3}),
\end{align*}
based on which we can  complete the proof of the lemma.\end{proof}

\begin{lemma} \label{conver} Let $\{u_{\eps}\}$, $\eps >0$ be the value functions of the tug-of-war game with payoff equal to $F$ on $\Gamma_\epsilon$. Then there is a sequence $\eps_j \downarrow 0$ for which the corresponding functions $u_{\eps_j}$ converge uniformly on compact subsets of $\overline{U_X}\times \R^m \times [0,T]$ to a continuous limit function $u: \overline{U_X}\times\mathbb R^m\times [0,T] \to \R$. Moreover, $u(X,Y,t)=F(X,Y,t)$ whenever $(X,Y,t)\in \Gamma^1\cup\Gamma^2$.
\end{lemma}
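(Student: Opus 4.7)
The proof follows the Arzelà–Ascoli strategy encoded in Lemma \ref{Ascoli}, and I would structure it in four steps.

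\emph{Step 1: uniform boundedness.} Since $F\in\mathcal G_{\epsilon_0}$ is bounded and, by Theorem \ref{u1}, $u_\epsilon$ is the expected value of a payoff built from $F$, one has $\|u_\epsilon\|_{L^\infty}\leq \|F\|_{L^\infty}$ uniformly in $\epsilon\in(0,\epsilon_0]$.

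\emph{Step 2: asymptotic equicontinuity.} Fix a compact set $K\subset\overline{U_X}\times\mathbb R^m\times[0,T]$ and $\eta>0$. For pairs $(X,Y,t),(\hat X,\hat Y,\hat t)\in K$ in which at least one of the points lies within $d$-distance $2\rho_0$ of $\Gamma^1\cup\Gamma^2$, Lemmas \ref{cruc} and \ref{cruc++}, combined with the triangle inequality for $d$ (see \eqref{e-ps.dist}) and the Lipschitz regularity \eqref{regu} of $F$, deliver the desired bound after possibly shrinking $\rho_0$. For \emph{interior} pairs lying at $d$-distance at least $2\rho_0$ from $\Gamma^1\cup\Gamma^2$, I would couple two tug-of-war game processes, one starting at $(X,Y,t)$ and the other at $(\hat X,\hat Y,\hat t)$, by feeding them common biased and fair coin tosses and synchronizing the players' velocity increments. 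Running both games until either exits the cylinder and bookkeeping the Kolmogorov distance between the two trajectories through a supermartingale analysis analogous to the one in the proof of Lemma \ref{cruc}, one bounds the expected $d$-distance of the two trajectories at the common stopping time by the initial distance plus terms that vanish as $\epsilon\to 0$. Comparing the two expected payoffs using the continuity of $F$ and the boundary equicontinuity already obtained then yields the required interior estimate. This coupling step is the main technical hurdle: the Kolmogorov update $Y\mapsto Y+\epsilon^2\tilde X/2$ binds the evolution of the position coordinate to the coupled velocity choices, and the synchronization has to respect the Borel measurability required by the strategy construction used in the proof of Theorem \ref{u1}.

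\emph{Step 3: extraction of a convergent subsequence.} With uniform boundedness and asymptotic equicontinuity established, Lemma \ref{Ascoli} applied to any compact $K$ furnishes a subsequence along which $u_\epsilon$ converges uniformly on $K$ to a continuous limit. Exhausting $\overline{U_X}\times\mathbb R^m\times[0,T]$ by an increasing family $K_1\subset K_2\subset\cdots$ of compact sets and a standard diagonal extraction yield a single sequence $\epsilon_j\downarrow 0$ and a function $u:\overline{U_X}\times\mathbb R^m\times[0,T]\to\mathbb R$, continuous on each $K_\ell$ and hence continuous throughout, such that $u_{\epsilon_j}\to u$ uniformly on every compact subset of $\overline{U_X}\times\mathbb R^m\times[0,T]$.

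\emph{Step 4: boundary values.} For $(X,Y,t)\in\Gamma^1\cup\Gamma^2$ and $\epsilon_j\leq\epsilon_0$ the definitions of $\Gamma^1_{\epsilon_j}$ and $\Gamma^2_{\epsilon_j}$ give $(X,Y,t)\in\Gamma_{\epsilon_j}$, so by the definition of a $(p,\epsilon)$-Kolmogorov function, $u_{\epsilon_j}(X,Y,t)=F(X,Y,t)$. Letting $j\to\infty$ yields $u=F$ on $\Gamma^1\cup\Gamma^2$, completing the proof.
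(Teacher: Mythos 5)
Your overall scaffolding matches the paper: uniform boundedness from $|u_\epsilon|\leq\|F\|_\infty$, the boundary strip handled by Lemmas \ref{cruc} and \ref{cruc++}, extraction via Lemma \ref{Ascoli} plus a diagonal argument over $K_\ell:=\overline{U_X}\times\overline{B_\ell(0)}\times[0,T]$, and the identification $u=F$ on $\Gamma^1\cup\Gamma^2$. But the central step --- asymptotic equicontinuity in the interior --- is where you and the paper diverge, and where your sketch has a genuine gap.

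You propose a probabilistic coupling: run two games from $(X,Y,t)$ and $(\hat X,\hat Y,\hat t)$ with synchronized coin tosses and velocity increments, then control the expected $d$-distance of the trajectories at the common stopping time by a supermartingale analysis. This idea faces several unresolved obstacles. First, the exit times of the two coupled games will not coincide: the $X$-trajectories are rigidly offset by $X_0-\hat X_0$, so one game will typically hit $\Gamma_X^\epsilon$ strictly before the other, leaving you with the payoff of the stopped game at round $\tau$ versus the other at round $\hat\tau\neq\tau$; you have to control how far the still-running trajectory wanders in the gap, and this is not immediate. Second, even with synchronized increments, the Kolmogorov update $Y_{k+1}=Y_k+\epsilon^2 X_{k+1}/2$ makes the $Y$-discrepancy drift linearly with $k$ at rate proportional to $|X_0-\hat X_0|$, and if $t\neq\hat t$ the two games have different maximal round counts $N(t)$ and $N(\hat t)$, complicating the comparison of the terms $|Y_\tau-\hat Y_{\hat\tau}-(\hat t_{\hat\tau}-t_\tau)\hat X_{\hat\tau}|^{1/3}$ in the quasi-metric. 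Third, you have to make the coupled near-optimal strategies Borel measurable; you flag this, but it is precisely where the details matter. You correctly identify the coupling as ``the main technical hurdle,'' but you do not resolve it, so the proposal as written does not prove interior equicontinuity.

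The paper sidesteps all of this with a deterministic translation argument that exploits the Lie-group invariance underlying $\K$. Write $(\hat Z,\hat W,\hat\tau):=(\hat X,\hat Y,\hat t)\circ(X,Y,t)^{-1}$ and define $\tilde u_\epsilon(Z,W,\tau):=u_\epsilon((\hat Z,\hat W,\hat\tau)\circ(Z,W,\tau))+\eta$. Because the game update $(X,Y)\mapsto(X+\epsilon\cdot,\,Y+\epsilon^2(\cdot)/2)$ commutes with the group law \eqref{e70}, the translated function $\tilde u_\epsilon$ is itself a $(p,\epsilon)$-Kolmogorov function in the shrunken cylinder $U_X(\rho_1)\times\R^m\times I^{\rho_1}$. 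Its boundary values on the corresponding $\tilde\Gamma_\epsilon\subset S_{\rho_1}$ are compared with those of $u_\epsilon$ using exactly your Step~2 boundary estimates, and then the comparison principle (Lemma \ref{u4}) gives $u_\epsilon\leq\tilde u_\epsilon=u_\epsilon\circ\text{(translation)}+\eta$ in the interior, and symmetrically the other inequality. No stopping-time matching, no supermartingale bookkeeping, no measurability of coupled strategies --- the comparison principle and group invariance do all the work. If you want to complete your proof along the lines you started, you would need to actually carry out the coupling estimates, including the gap between $\tau$ and $\hat\tau$; otherwise the cleaner route is to replace Step~2 for interior pairs by the translation-plus-comparison argument.

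Your Steps 1, 3 and 4 are correct and essentially the paper's. Step 4 also implicitly uses the continuity of $F$ (guaranteed by \eqref{regu}) to pass the pointwise identity $u_{\epsilon_j}=F$ on $\Gamma^1\cup\Gamma^2$ through the limit, which is worth stating.
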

\begin{proof} To prove the lemma, we will use Lemma \ref{Ascoli}, Lemma \ref{cruc}, Lemma \ref{cruc++} and the comparison principle established in Lemma \ref{u4}. In particular, we need to verify the assumptions of Lemma \ref{Ascoli}. We first note that as $|u_\eps|\leq \max |F|$, the functions $\{u_{\eps}\}$ are uniformly bounded.  Let $\eta > 0$. Suppose that  $(X,Y,t),(\hat X,\hat Y,\hat t)\in \overline{U_X}\times\mathbb R^m\times [0,T]$.
For $\rho>0$ small, we introduce the notation
$$U_X(\rho):=\{X\in {U_X}:\ d(X,\partial U_X)>\rho\},\ I^{\rho}:=\{t\in I:\ t>\rho^2\},$$
and the strip
$$S_{\rho}:= (\overline{U_X}\times \mathbb R^m\times [0,T])\setminus (U_X(\rho) \times \mathbb R^m\times I^\rho).$$
 Using Lemma \ref{cruc}, Lemma \ref{cruc++}, recalling that $u_\epsilon$ coincides with $F$ on $\Gamma^1 \cup \Gamma^2$ and simply comparing boundary values using \eqref{regu}, we see that  given $\eta>0$, there exist $\rho_1>0$ and $\eps_1>0$ such that if  $\eps<\eps_1$ and $d((X,Y,t),(\hat X, \hat Y, \hat t)) < \rho_1$ then
\[
|u_\eps(X,Y,t)-u_\eps(\hat X,\hat Y,\hat t)|<\eta
\]
whenever $(X,Y,t)\in S_{\rho_1}$ or $(\hat X,\hat Y,\hat t)\in S_{\rho_1}$. By taking a smaller $\epsilon_1$ if necessary, we may also assume that $\epsilon_1 < \rho_1/2$. It remains to consider the case where the points $(X,Y,t), (\hat X,\hat Y,\hat t)$ both belong to $U_X(\rho_1)\times\mathbb R^m\times I^{\rho_1}$. Define
$$(\hat Z,\hat W,\hat \tau):=(\hat X,\hat Y,\hat t)\circ(X,Y,t)^{-1}.$$
Without loss of generality, we may assume that $\hat t \leq t$ so that $\hat \tau \leq 0$. A calculation shows that if $\rho_0 > 0$ and if
\begin{align}\label{distcond_rho0}
 d((X,Y,t),(\hat X, \hat Y, \hat t)) < \rho_0
\end{align}
then
\begin{align*}
\|(\hat Z,\hat W,\hat \tau)\|\leq c(m,R,T)d^{1/3}\leq c \rho_0^{1/3}.
\end{align*}
From this it follows that if $(Z,W,\tau)\in E:= U_X(\rho_1)^\epsilon \times\mathbb R^m\times (\rho^2_1 - \epsilon^2 / 2, T]$, and if \eqref{distcond_rho0} holds with a sufficiently small $\rho_0$ then $(\hat Z,\hat W,\hat \tau)\circ(Z,W,\tau) \in U_X \times \R^m \times (0,T]$. Here, the superscript $\epsilon$ takes the same meaning as in Section \ref{sec3}, i.e., we expand the set $U_X(\rho_1)$ by $\epsilon$ in all directions. Note that in this argument we also use the fact that $\epsilon < \epsilon_1 < \rho_1/2$, so that $E$ is at a positive distance (depending only on $\rho_1$) from the parabolic boundary of $U_X \times \R^m \times (0,T]$. The assumption $\hat \tau \leq 0$ was needed to guarantee that $\tau + \hat \tau \leq T$. Thus, we can define
\begin{align*}\tilde u_\epsilon(Z,W,\tau)&:=u_\epsilon((\hat Z,\hat W,\hat \tau)\circ(Z,W,\tau)) + \eta\\
&=u_\epsilon(Z+\hat Z,W+\hat W-\tau\hat Z,\hat\tau+\tau) + \eta,
\end{align*}
for all $(Z,W,\tau)\in E$. Note that the domain of $\tilde u_\epsilon$ is the union of $U_X(\rho_1)\times \R^m \times (\rho_1^2, T]$ and its parabolic $\epsilon$-boundary $\tilde \Gamma_\epsilon$, defined in a way which is analogous to Section \ref{sec3}. To be explicit, we define
\begin{align*}
 \tilde \Gamma_\epsilon &:= \tilde \Gamma^1_\epsilon \cup \tilde \Gamma^2_\epsilon,
 \\
 \tilde \Gamma^1_\epsilon &:= \tilde \Gamma^\epsilon_X \times \R^m \times (\rho_1^2 - \epsilon^2 / 2, T],
 \\
 \tilde \Gamma^2_\epsilon &:= U_X(\rho_1) \times \R^m \times (\rho_1^2 - \epsilon^2 /2, \rho_1^2],
 \\
 \tilde \Gamma^\epsilon_X &:= \{ X \in \R^m \setminus U_X(\rho_1) : d(X, \partial U_X(\rho_1)) \leq \epsilon \}.
\end{align*}
We furthermore set $\tilde F := \tilde u_\epsilon |_{\tilde \Gamma_\epsilon}$. Note that since $u_\epsilon \in \mathcal{G}_\epsilon'$, the properties defining this function class are inherited by $\tilde u_\epsilon$, with appropriate modifications taking into account the domain of $\tilde u_\epsilon$. Thus also $\tilde F$ satisfies properties similar to those in the definition of $\mathcal{G}_\epsilon$. We claim that $\tilde u_\epsilon$ is the unique
$(p,\epsilon)$-Kolmogorov function in $U_X(\rho_1)\times\mathbb R^m\times I^{\rho_1}$ with boundary values defined by $\tilde F$.
To see that $\tilde u_\epsilon$ is a $(p,\epsilon)$-Kolmogorov function we take $(Z,W,\tau)\in U_X(\rho_1)\times\mathbb R^m\times I^{\rho_1}$, and note that since
$u_\epsilon$ is a $(p,\epsilon)$-Kolmogorov function in $U_X\times\mathbb R^m\times I$, we have
\begin{align*}
\quad \tilde u_\epsilon(Z,W,\tau)&=\frac \alpha 2\biggl\{\sup_{\tilde X\in{{B_\epsilon(\hat Z+Z)}}}u_\epsilon(\tilde X,\hat W + W - \tau\hat Z+{\epsilon^2}\tilde X/2,(\hat\tau+\tau)-{\epsilon^2}/2)\biggr \}\notag
\\
&\hphantom{=}+\frac \alpha 2\biggl\{\inf_{\tilde X\in{{B_\epsilon(\hat Z+Z)}}}u_\epsilon(\tilde X,\hat W + W - \tau\hat Z+{\epsilon^2}\tilde X/2,(\hat\tau+\tau)-{\epsilon^2}/2)\biggr \}\notag
\\
&\hphantom{=}+\beta \barint_{B_\epsilon(\hat Z+Z)} u_\epsilon(\tilde X,\hat W + W - \tau\hat Z+{\epsilon^2}\tilde X/2,(\hat\tau+\tau)-{\epsilon^2}/2) \d \tilde X + \eta \notag
\\
&=\frac \alpha 2\biggl\{\sup_{\tilde X\in{{B_\epsilon(Z)}}}u_\epsilon(\tilde X+\hat Z,\hat W + W - (\tau-{\epsilon^2}/2)\hat Z+{\epsilon^2}\tilde X/2,(\hat\tau+\tau)-{\epsilon^2}/2)\biggr \}\notag
\\
&\hphantom{=}+\frac \alpha 2\biggl\{\inf_{\tilde X\in{{B_\epsilon(Z)}}}u_\epsilon(\tilde X+\hat Z,\hat W + W -(\tau-{\epsilon^2}/2)\hat Z+{\epsilon^2}\tilde X/2,(\hat\tau+\tau)-{\epsilon^2}/2)\biggr \}\notag
\\
&\hphantom{=}+\beta \barint_{B_\epsilon(Z)} u_\epsilon(\tilde X+\hat Z,\hat W + W - (\tau-{\epsilon^2}/2)\hat Z+{\epsilon^2}\tilde X/2,(\hat\tau+\tau)-{\epsilon^2}/2) \d \tilde X + \eta
\\
& = \frac \alpha 2\biggl\{\sup_{\tilde X\in{{B_\epsilon(Z)}}} \tilde u_\epsilon(\tilde X, W + \epsilon^2 \tilde X/2, \tau -{\epsilon^2}/2) + \inf_{\tilde X\in{{B_\epsilon(Z)}}} \tilde u_\epsilon(\tilde X, W + \epsilon^2 \tilde X/2, \tau -{\epsilon^2}/2)\biggr \}\notag
\\ &\hphantom{=}+ \beta \barint_{B_\epsilon(Z)} \tilde u_\epsilon(\tilde X, W + \epsilon^2 \tilde X/2, \tau -{\epsilon^2}/2) \d \tilde X.
\end{align*}
Note that for $\rho_1$ sufficiently small, $U_X(\rho_1)$ has the same regularity as $U_X$. Thus, we are precisely in the situation of Section \ref{sec3} modulo a translation of the time interval. Hence, uniqueness follows from Lemma \ref{EUKolmo}, or Lemma \ref{u4}.

We now want to compare the values of $\tilde u_\epsilon$ and $u_\epsilon$ on $\tilde \Gamma_\epsilon$ in order to conclude a relation between these functions also in $U(\rho_1)\times \R^m \times I^{\rho_1}$. A direct calculation shows that for all $(Z, W, \tau)\in \tilde \Gamma_\epsilon$, we have
\begin{align*}
d((Z,W,\tau) , (\hat Z, \hat W, \hat \tau) \circ (Z,W,\tau)) = \| (\hat Z, \hat W +(\hat\tau-\tau)\hat Z + \hat \tau Z, \hat \tau ) \| < \rho_1,
\end{align*}
if $\rho_0$ is sufficiently small. Since $\tilde \Gamma_\epsilon \subset S_{\rho_1}$, this means that we can use the first part of the proof to conclude that
\begin{align*}
 \tilde u_\epsilon(Z,W,\tau) -u_\epsilon(Z,W,\tau) = u_\epsilon((\hat Z, \hat W, \hat \tau) \circ (Z,W,\tau)) - u_\epsilon(Z,W,\tau) + \eta > 0,
\end{align*}
for $(Z, W, \tau)\in \tilde \Gamma_\epsilon$ when $\rho_0$ is chosen small enough. Hence, using the comparison principle from Lemma \ref{u4}, we have  $\tilde u_\epsilon \geq u_\epsilon$ on the set $U_X(\rho_1)\times \R^m \times I^{\rho_1}$, and hence,
\begin{align}
u_\epsilon(X,Y,t)\leq\tilde u_\epsilon(X,Y,t)=u_\epsilon(\hat X,\hat Y,\hat t) + \eta.
\end{align}
The lower bound follows by a similar argument. Thus, we have verified the assumptions of Lemma \ref{Ascoli} for the functions $u_\epsilon$ on the set $\overline{U_X}\times \R^m\times [0,T]$. In particular, the assumptions hold on the compact subsets $K_j := \overline{U_X}\times \bar B_j(0)\times [0,T]$. We can thus apply Lemma \ref{Ascoli} to find a uniformly convergent subsequence on every set $K_j$. By another diagonalization argument, we obtain a sequence which converges uniformly on every compact subset of $\overline{U_X}\times \R^m\times [0,T]$.
\end{proof}

\begin{lemma}\label{raja} Let $\{u_{\eps_j}\}$, $\eps_j\rightarrow 0$, be a sequence of value functions of the tug-of-war game with payoff equal to $F$ on $\Gamma_\epsilon$. Suppose that
$u_{\eps_j}\rightarrow u$ as $j\to\infty$ as stated in Lemma \ref{conver}. Then $u$ is a viscosity solution to \eqref{Dppa}.
\end{lemma}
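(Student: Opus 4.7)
The boundary condition $u=F$ on $\Gamma^1\cup\Gamma^2$ is already contained in Lemma \ref{conver}, so the plan is to verify that $u$ is a viscosity solution of $\mathcal{K}_p u=0$ in $U_X\times\mathbb R^m\times I$. Since the super- and subsolution cases are symmetric, I will describe only the supersolution part, which by Lemma \ref{Vissolsimp} reduces the verification to two types of test functions. Start with $\phi\in C^2$ touching $u$ strictly from below at some $(\hat X,\hat Y,\hat t)\in U_X\times\mathbb R^m\times I$; the reduction to strict contact is standard (perturb $\phi$ by a small quadratic vanishing at $(\hat X,\hat Y,\hat t)$).

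The key step is the construction of near-minimizers. Fix a small closed Euclidean ball $B$ around $(\hat X,\hat Y,\hat t)$ contained in $U_X\times\mathbb R^m\times I$, on whose boundary $u-\phi\geq 2\delta>0$. By the uniform convergence $u_{\epsilon_j}\to u$ on $B$ given by Lemma \ref{conver}, and by Theorem \ref{u1} identifying $u_{\epsilon_j}$ with the unique $(p,\epsilon_j)$-Kolmogorov function, we may choose $(X_j,Y_j,t_j)\in B$ with
\begin{equation*}
u_{\epsilon_j}(X_j,Y_j,t_j)-\phi(X_j,Y_j,t_j)\leq \inf_{B}(u_{\epsilon_j}-\phi)+\eta_j,\qquad \eta_j:=\epsilon_j^3.
\end{equation*}
The strict touching and the uniform bound above force $(X_j,Y_j,t_j)\to (\hat X,\hat Y,\hat t)$.

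Next, I would invoke the discrete mean value identity \eqref{cond:p-eps-Kolm1} for $u_{\epsilon_j}$ at $(X_j,Y_j,t_j)$ and substitute the near-minimum bound
\begin{equation*}
u_{\epsilon_j}(\tilde X,Y_j+\tfrac{\epsilon_j^2}{2}\tilde X,t_j-\tfrac{\epsilon_j^2}{2})\geq \phi(\tilde X,Y_j+\tfrac{\epsilon_j^2}{2}\tilde X,t_j-\tfrac{\epsilon_j^2}{2})+\bigl[u_{\epsilon_j}(X_j,Y_j,t_j)-\phi(X_j,Y_j,t_j)\bigr]-\eta_j,
\end{equation*}
valid for $\tilde X\in B_{\epsilon_j}(X_j)$ once $\epsilon_j$ is so small that all the shifted evaluation points lie in $B$. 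Cancelling the common term $u_{\epsilon_j}(X_j,Y_j,t_j)$ yields the discrete inequality
\begin{equation*}
\phi(X_j,Y_j,t_j)+\eta_j\geq \tfrac{\alpha}{2}\bigl\{\sup+\inf\bigr\}\phi(\tilde X,Y_j+\tfrac{\epsilon_j^2}{2}\tilde X,t_j-\tfrac{\epsilon_j^2}{2})+\beta\barint_{B_{\epsilon_j}(X_j)}\phi(\tilde X,Y_j+\tfrac{\epsilon_j^2}{2}\tilde X,t_j-\tfrac{\epsilon_j^2}{2})\,d\tilde X.
\end{equation*}
Using the Taylor expansion derived inside the proof of Theorem \ref{meanvalue1thmgaian} (when $\nabla_X\phi(X_j,Y_j,t_j)\neq 0$), the right-hand side equals $\phi(X_j,Y_j,t_j)+\tfrac{\epsilon_j^2}{2(m+p)}\mathcal{K}_p\phi(X_j,Y_j,t_j)+o(\epsilon_j^2)$. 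Rearranging, dividing by $\epsilon_j^2$, and using $\eta_j/\epsilon_j^2\to 0$ together with continuity of $\mathcal{K}_p\phi$, the limit $j\to\infty$ delivers $\mathcal{K}_p\phi(\hat X,\hat Y,\hat t)\leq 0$, which is the first supersolution inequality in Lemma \ref{Vissolsimp}.

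The remaining degenerate case, $\nabla_X\phi(\hat X,\hat Y,\hat t)=0$ and $\nabla_X^2\phi(\hat X,\hat Y,\hat t)=0$, is handled identically except that the expansion reduces to the simpler form obtained in \eqref{imp_x_hess=0} of Theorem \ref{meanvalue1thm}, whose passage to the limit yields $(m+p)(\partial_t\phi-\hat X\cdot\nabla_Y\phi)(\hat X,\hat Y,\hat t)\geq 0$, as required by Lemma \ref{Vissolsimp}. The subsolution argument is obtained by touching from above, choosing near-maximizers, and reversing all inequalities. The main obstacle, and the reason for formulating things with approximate minimizers and slack $\eta_j=\epsilon_j^3$, is that the $u_{\epsilon_j}$ are only bounded and Borel (not continuous), so genuine extremizers need not exist; however the $o(\epsilon_j^2)$ slack is absorbed harmlessly into the Taylor remainder. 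A minor bookkeeping check is that for all $\tilde X\in B_{\epsilon_j}(X_j)$ the point $(\tilde X,Y_j+\tfrac{\epsilon_j^2}{2}\tilde X,t_j-\tfrac{\epsilon_j^2}{2})$ lies in $B$ for large $j$, which follows from $\epsilon_j\to 0$ and the uniform bound on $|\tilde X|$.
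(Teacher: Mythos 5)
Your proposal is correct and follows essentially the same route as the paper's proof: pick approximate minimizers of $u_{\epsilon_j}-\phi$ near the touching point with slack $\epsilon_j^3$, feed the near-minimality into the dynamic programming identity \eqref{cond:p-eps-Kolm1} to obtain a discrete one-sided inequality for the test function $\phi$, apply the Taylor-expansion estimate from Section~\ref{sec2}, and pass $j\to\infty$. One small imprecision: the paper does not write the right-hand side of the discrete inequality as $\phi+\frac{\epsilon_j^2}{2(m+p)}\mathcal{K}_p\phi(X_j,Y_j,t_j)+o(\epsilon_j^2)$, but rather keeps the explicit minimizer direction $(X_1^{\epsilon_j,Y_j,t_j-\epsilon_j^2/2}-X_j)/\epsilon_j$ in the Hessian term (see \eqref{meanvalue3ach}--\eqref{est:phi}) and only identifies it with $-\nabla_X\phi/|\nabla_X\phi|(\hat X,\hat Y,\hat t)$ after sending $j\to\infty$; this avoids having to justify that the $o(\epsilon_j^2)$ remainder in the full mean value expansion is uniform as both the base point and $\epsilon_j$ vary, and it also handles the degenerate case $\nabla_X\phi=0$, $\nabla_X^2\phi=0$ cleanly since $\nabla_X^2\phi(X_j,Y_j,t_j)$ need not vanish at the approximate minimizer, only in the limit.
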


\begin{proof} We will only give the proof of the statement that $u$ is a viscosity supersolution to \eqref{Dppa} as the proof of the
statement that $u$ is a viscosity subsolution to \eqref{Dppa} is analogous. Let $(\hat X,\hat Y,\hat t)\in U_X\times \mathbb R^m\times I$ and assume that $\phi\in C^2$ touches $u$ from below at $(\hat X,\hat Y,\hat t)$. In light of Lemma \ref{Vissolsimp}, we need to prove that at $(\hat X,\hat Y,\hat t)$,
\begin{eqnarray*}
(i)&& (m+p)(\partial_t\phi-\hat X\cdot\nabla_Y\phi))\geq ((p-2)\Delta_{\infty,X}^N+\Delta_X)\phi,\mbox{ if $\nabla_X \phi(\hat X,\hat Y,\hat t)\neq 0$},\notag\\
(ii)&& (m+p)(\partial_t\phi-\hat X\cdot\nabla_Y\phi))\geq 0,\mbox{ if $\nabla_X\phi(\hat X,\hat Y,\hat t)=0$ and $\nabla_X^2\phi(\hat X, \hat Y, \hat t) = 0$}.
\end{eqnarray*}
We note that the following inequality, a version of \eqref{impagain}, holds for all smooth function $\phi$,
\begin{eqnarray}\label{meanvalue3ach}
&&\frac \alpha 2\biggl\{\max_{\tilde X\in\overline{{B_\epsilon(\hat X)}}}\phi(\tilde X,\hat Y+{\epsilon^2}\tilde X/2,\hat t-{\epsilon^2}/2)+\min_{\tilde X\in\overline{{B_\epsilon(\hat X)}}}\phi(\tilde X,\hat Y+{\epsilon^2}\tilde X/2,\hat t-{\epsilon^2}/2)\biggr \}\notag
\\
&&+\beta \barint_{B_\epsilon(\hat X)} \phi(\tilde X,\hat Y+{\epsilon^2}\tilde X/2,\hat t-{\epsilon^2}/2) \d \tilde X -\phi(\hat X,\hat Y,\hat t)\notag
\\
&&\geq \alpha \frac {\epsilon^2}2 \langle \nabla_X^2\phi(\hat X,\hat Y,\hat t)(\frac{X_1^{\epsilon,\hat Y,\hat t - \epsilon^2/2}-\hat X}{\epsilon}), (\frac{X_1^{\epsilon,\hat Y,\hat t - \epsilon^2/2}-\hat X}{\epsilon})\rangle\ \notag
\\
&& \hphantom{\geq} +\frac {\epsilon^2}2(\hat  X\cdot\nabla_{Y}-\partial_{t} + \tfrac{1}{m+p}\Delta_X)\phi(\hat X,\hat Y,\hat t)+{o}(\epsilon^2).
\end{eqnarray}
Here, $X_1^{\epsilon,\tilde Y,\tilde t}$ is defined as in the deductions leading up to \eqref{impagain}. We introduce the auxiliary functions
\begin{align*}
 f(X,Y,t) &:= u(X,Y,t) - \phi(X,Y,t),\\
 f_j(X,Y,t) &:= u_{\epsilon_j}(X,Y,t) - \phi(X,Y,t).
\end{align*}
Since $\phi$ touches $u$ from below at $(\hat X, \hat Y, \hat t)$, we have
\begin{align*}
 f(\hat X, \hat Y, \hat t) &= 0,
 \\
 f(X,Y,t) &> 0, \textrm{ if } (X,Y,t) \neq (\hat X, \hat Y, \hat t).
\end{align*}
For $\rho>0$ we define
\begin{align*}
 W_\rho &:= B_\rho(\hat X) \times B_\rho(\hat Y) \times (\hat t - \rho, \hat t + \rho),\  K_\rho := \overline{W_{2\rho}}\setminus W_\rho.
\end{align*}
Let $\rho>0$ be so small that $\overline{W_{2\rho}}\subset U_X \times \R^m\times I$. By the continuity of $f$ and the compactness of $K_\rho$, there exists a $c_\rho > 0$ such that $f \geq c_\rho$ on $K_\rho$. By the uniform convergence of $u_{\epsilon_j}$, we see that $f_j > c_\rho/2$ on $K_\rho$ for large $j$. Also, $f_j(\hat X, \hat Y, \hat t) < c_\rho/4$ for large $j$. If $j$ is sufficiently large, we also have $\epsilon_j^3 < c_\rho/4$. These observations show that for large $j$,
\begin{align}\label{est:f_j}
 f_j > c_\rho/2 = c_\rho/4 + c_\rho/4 > f_j(\hat X, \hat Y, \hat t) + \epsilon_j^3 \geq \inf_{W_{2\rho}} f_j + \epsilon_j^3, \textrm{ on } K_\rho.
\end{align}
Pick a point $(X_j, Y_j, t_j)\in W_{2\rho}$ such that
\begin{align}\label{X_jY_jt_j}
 f_j(X_j,Y_j,t_j) <\inf_{W_{2\rho}} f_j + \epsilon_j^3.
\end{align}
By \eqref{est:f_j}, we see that, in fact, $(X_j, Y_j, t_j)\in W_\rho$. Hence, for large $j$ we have
\begin{align}\label{inclusion:W_2rho}
 B_{\epsilon_j}(X_j) \times \big\{ Y_j + {\epsilon_j^2}X_j/2\big\} \times \big\{ t_j -{\epsilon_j^2}/{2}\big\} \subset W_{2\rho}.
\end{align}
From \eqref{X_jY_jt_j} and the definition of $f_j$, it immediately follows that
\begin{align}\label{est:u_eps_j-phi_j}
 u_{\epsilon_j}(X,Y,t) > \phi_j(X,Y,t) - \epsilon_j^3, \hspace{7mm} (X,Y,t) \in W_{2\rho},
\end{align}
where
\begin{align*}
 \phi_j(X,Y,t) := \phi(X,Y,t) + u_{\epsilon_j}(X_j, Y_j, t_j) - \phi(X_j,Y_j,t_j).
\end{align*}
Using the fact that  $u_{\eps_j}$ is the value function of a tug-of-war game, i.e.~ $u_{\eps_j}$ is a $(p,\eps_j)$-Kolmogorov function, and the estimate \eqref{est:u_eps_j-phi_j} combined with \eqref{inclusion:W_2rho}, we thus have for large $j$
\begin{align*}
  \phi_j(X_j, Y_j, t_j) &=u_{\epsilon_j}(X_j, Y_j, t_j)
  \\
  &= \frac{\alpha}{2}\big\{ \sup_{X\in B_{\epsilon_j}(X_j)} u_{\epsilon_j} (X, Y_j + {\epsilon_j^2}X/2, t_j -{\epsilon_j^2}/{2}) + \inf_{X\in B_{\epsilon_j}(X_j)} u_{\epsilon_j} (X, Y_j + {\epsilon_j^2}X/2, t_j -{\epsilon_j^2}/{2})\big\}
 \\
 &\hphantom{=} + \beta \barint_{B_{\epsilon_j}(X_j)} u_{\epsilon_j}(X, Y_j + {\epsilon_j^2}X/2, t_j -{\epsilon_j^2}/{2}) \d X
 \\
 &\geq \frac{\alpha}{2}\big\{ \max_{X\in \overline{B_{\epsilon_j}(X_j)}} \phi_j (X, Y_j + {\epsilon_j^2}X/2, t_j -{\epsilon_j^2}/{2}) + \min_{X\in\overline{B_{\epsilon_j}(X_j)}} \phi_j (X, Y_j + {\epsilon_j^2}X/2, t_j -{\epsilon_j^2}/{2})\big\}
 \\
 &\hphantom{=} + \beta \barint_{B_{\epsilon_j}(X_j)} \phi_j(X, Y_j + {\epsilon_j^2}X/2, t_j -{\epsilon_j^2}/{2}) \d X - \epsilon_j^3.
\end{align*}
Noting that $\rho$ in the previous argument can be chosen arbitrarily small, we see that passing to subsequences we may assume that $(X_j, Y_j, t_j) \to (\hat X, \hat Y, \hat t)$. Combining the last estimate with \eqref{meanvalue3ach} with $\phi$ replaced by $\phi_j$, with $\epsilon = \epsilon_j$ and with the point $(X_j, Y_j, t_j)$ replacing $(\hat X, \hat Y, \hat t)$, we obtain
\begin{align}\label{est:phi}
 (p-2) \langle \nabla_X^2\phi(X_j, Y_j, t_j)(\frac{X_1^{\epsilon, Y_j, t_j - \epsilon_j^2/2} - X_j}{\epsilon_j}), (\frac{X_1^{\epsilon, Y_j, t_j - \epsilon_j^2/2}-\hat X}{\epsilon_j})\rangle\ + \Delta_X \phi(X_j, Y_j, t_j)
 \\
 \notag +(m+p)(X_j\cdot \nabla_Y - \partial_t)\phi(X_j,Y_j,t_j) \leq \epsilon_j^{-2}o(\epsilon_j^2)
\end{align}
where we also used the fact that the derivatives of $\phi$ and $\phi_j$ coincide. Since $\phi_j$ and $\phi$ only differ by a constant, the map $X\mapsto \phi(X, Y_j + \epsilon_j^2 X/2, t_j -\epsilon_j^2/2)$ attains its minimum in the ball $\overline{B_{\epsilon_j}(X_j)}$ at  $X_1^{\epsilon, Y_j, t_j - \epsilon_j^2/2}$. In the case that $\nabla_X \phi(\hat X, \hat Y, \hat t) \neq 0$, we can pass to the limit $j\to \infty$ reasoning as in the proof of Theorem \ref{meanvalue1thm} to obtain condition $(i)$ as desired. Consider now the case where $\nabla_X \phi(\hat X, \hat Y, \hat t) = 0$ and $\nabla_X^2\phi(\hat X, \hat Y, \hat t) = 0$. Since the norm of $(X_1^{\epsilon, Y_j, t_j - \epsilon_j^2/2} - X_j)/\epsilon_j$ stays bounded, we see that in this case the first two terms on the left-hand side of \eqref{est:phi} vanish in the limit $j\to \infty$ and we end up with $(ii)$.
\end{proof}

\begin{lemma}\label{raja+} Let $u$ and $v$ be two bounded viscosity solutions to \eqref{Dppa}. Then $u\equiv v$.
\end{lemma}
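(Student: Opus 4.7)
The plan is to establish the comparison principle by doubling variables, Ishii's theorem of sums, and then passing to the limit, closely following the strategy employed in the proof of Lemma \ref{Vissolsimp}. I will argue by contradiction, assuming $M := \sup_{\overline{U_X}\times \R^m\times[0,T]}(u-v) > 0$, and show this is impossible.

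The first step is to localize and make the supersolution strict. Given the unboundedness of the domain in $Y$ and the fact that no data is prescribed at $t = T$, I would introduce, for small parameters $\lambda, \mu > 0$,
\begin{align*}
 v_{\lambda,\mu}(X,Y,t) := v(X,Y,t) + \frac{\lambda}{T-t} + \mu\,(1+|Y|^2)^{1/2}.
\end{align*}
A direct computation using $\nabla_X v_{\lambda,\mu} = \nabla_X v$, $\nabla_X^2 v_{\lambda,\mu} = \nabla_X^2 v$ and the boundedness of $U_X$ (so that $|X \cdot \nabla_Y[(1+|Y|^2)^{1/2}]| \leq R$) shows that, for $\mu$ sufficiently small relative to $\lambda$, the function $v_{\lambda,\mu}$ is a strict viscosity supersolution in the sense that the viscosity inequalities defining supersolutions for $\mathcal{K}_p$ hold with a negative right-hand side $-\delta$ for some $\delta = \delta(\lambda, T) > 0$. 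Since $u$ and $v$ are bounded and agree on $\Gamma^1 \cup \Gamma^2$, and since $(u - v_{\lambda,\mu})(X,Y,t) \to -\infty$ as $|Y|\to \infty$ or $t \to T^-$, the supremum of $u - v_{\lambda,\mu}$ is attained at some interior point $(\hat X,\hat Y,\hat t)\in U_X\times\R^m\times(0,T)$ and is still positive for $\lambda, \mu$ small enough.

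Next I would double variables. Set
\begin{align*}
 w_j(X,Y,t,\tilde X,\tilde Y,\tilde t) := u(X,Y,t) - v_{\lambda,\mu}(\tilde X,\tilde Y,\tilde t) - \Phi_j,\qquad \Phi_j := \tfrac{j^4}{4}|X-\tilde X|^4 + \tfrac{j^4}{4}|Y-\tilde Y|^4 + \tfrac{j}{2}|t-\tilde t|^2,
\end{align*}
restricted to $\overline W\times \overline W$ for a compact neighborhood $\overline W$ of $(\hat X,\hat Y,\hat t)$ inside $U_X\times\R^m\times (0,T)$. Let $P_j = (X_j,Y_j,t_j,\tilde X_j,\tilde Y_j,\tilde t_j)$ be a maximum point. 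Arguing as in the proof of Lemma \ref{Vissolsimp}, one obtains $P_j \to (\hat X,\hat Y,\hat t,\hat X,\hat Y,\hat t)$ and $\Phi_j(P_j)\to 0$. Separating the cases $X_j = \tilde X_j$ (along a subsequence) and $X_j \neq \tilde X_j$ exactly as in that proof, I would handle the degenerate case directly via the simplified test function criterion of Lemma \ref{Vissolsimp}, and in the nondegenerate case apply the Crandall-Ishii-Lions theorem of sums to produce symmetric matrices $H$ and $-E$ such that $(\nabla_{X,Y,t}\Phi_j(P_j), H)\in \overline{J}^{2,-} u(X_j,Y_j,t_j)^c = \overline J^{2,+}u(X_j,Y_j,t_j)$ and $(-\nabla_{\tilde X,\tilde Y,\tilde t}\Phi_j(P_j), E)\in \overline{J}^{2,-} v_{\lambda,\mu}(\tilde X_j,\tilde Y_j,\tilde t_j)$, together with the matrix inequality implying that every principal subminor of $H-E$ is negative semidefinite.

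Combining the viscosity inequality for the subsolution $u$ (with the $H$-jet) and for the strict supersolution $v_{\lambda,\mu}$ (with the $E$-jet), exactly as in the second half of the proof of Lemma \ref{Vissolsimp}, summing and using the eigenvalue re-ordering trick for $p<2$, I would arrive at
\begin{align*}
 \delta \leq (m+p)(\tilde X_j - X_j)\cdot \nabla_{\tilde Y}\Phi_j(P_j) + (p-2)\bigl(\lambda(E^X) - \lambda(-H^X)\bigr) + \textrm{tr}(E^X + H^X),
\end{align*}
where the last two terms are nonpositive and the first is bounded, via Young's inequality, by $c(j^4|X_j-\tilde X_j|^4 + j^4|Y_j-\tilde Y_j|^4)$, which vanishes as $j\to \infty$. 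This contradicts $\delta > 0$. Sending $\lambda,\mu \to 0$ yields $u\leq v$, and interchanging the roles of $u$ and $v$ gives $u\equiv v$. The principal obstacle is the interplay between the Kolmogorov drift $X\cdot \nabla_Y$ and the doubling penalty: the drift produces cross-terms that must be absorbed using exactly the same Young-inequality argument that makes the corresponding step in Lemma \ref{Vissolsimp} work, and the degeneracy of the $p$- and $\infty$-Laplace operators at vanishing gradient is circumvented by appealing to the reduced set of test functions from that same lemma.
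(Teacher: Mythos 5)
Your approach is correct and essentially parallels the paper's proof: you argue by contradiction, localize via an auxiliary function that makes one of the two solutions a strict (super)solution, double variables with the quartic/quadratic penalty, handle the degenerate case $X_j=\tilde X_j$ directly with the reduced test-function criterion of Lemma \ref{Vissolsimp}, invoke the Crandall--Ishii theorem of sums in the non-degenerate case, subtract the two viscosity inequalities, and kill the resulting drift term $(X_j-\tilde X_j)\cdot\nabla_Y\Phi_j(P_j)$ by Young's inequality and the vanishing of $\Phi_j(P_j)$. The main cosmetic difference is your choice of auxiliary function: the paper subtracts $w_{\lambda,A,\theta,\eta}(Y,t)=\frac{\theta}{2}e^{-\lambda(T-t)}(|Y|^2+A)+\eta/(T-t)$ from $u$, whereas you add $\lambda/(T-t)+\mu(1+|Y|^2)^{1/2}$ to $v$. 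Both serve to force the maximizer of $u-v$ into a compact interior set and to create a positive margin $\delta$ in the supersolution inequality. Your choice of $(1+|Y|^2)^{1/2}$ has a uniformly bounded $Y$-gradient, so the drift contribution $|X\cdot\nabla_Y[(1+|Y|^2)^{1/2}]|\le R$ is immediate without the balancing act involving $A=2R^2/\lambda^2$ that the paper performs; this buys a slightly simpler strictness estimate at no cost.

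There is one notational inconsistency worth flagging. You correctly state that the theorem of sums with your sign conventions (producing $H$ for $u$ in $\overline J^{2,+}u$ and $E$ for $v_{\lambda,\mu}$ in $\overline J^{2,-}v_{\lambda,\mu}$) yields $H-E\le 0$. However, the combined inequality you then write carries the terms $(p-2)\bigl(\lambda(E^X)-\lambda(-H^X)\bigr)+\textrm{tr}(E^X+H^X)$ and you say the last is nonpositive because ``$E^X+H^X$ is negative semidefinite''. That statement is copied from Lemma \ref{Vissolsimp}, where a different sign convention ($H+E\le 0$) applies. With your conventions one obtains instead $(p-2)\,\widehat{\eta^X}\cdot(H^X-E^X)\,\widehat{\eta^X}+\textrm{tr}(H^X-E^X)$, which is nonpositive because $H^X-E^X\le 0$; the argument then closes in exactly the same way. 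This is a bookkeeping slip, not a gap. Also note that, as in the paper, the relation $\nabla_x\Phi_j(P_j)=-\nabla_y\Phi_j(P_j)$ is what makes the first-order parts in the two viscosity inequalities cancel except for the cross term $(\tilde X_j-X_j)\cdot\nabla_{\tilde Y}\Phi_j(P_j)$; it is worth stating this explicitly since it is the crux of why the Kolmogorov drift is controllable.
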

\begin{proof}  Assume that $u$ and $v$ are viscosity solutions to \eqref{Dppa} with the same boundary values. It suffices to prove that
$u\leq v$ on $U_X\times\mathbb R^m\times I$. By initially considering a smaller time interval, we may assume that $u$ and $v$ are continuously defined up to time $t=T$. We introduce the auxiliary functions
$$w(Y,t)=w_{\lambda,A,\theta,\eta}(Y,t):=\frac\theta 2 e^{-\lambda (T-t)}(|Y|^2+A)+\eta/(T-t),$$
and
$$\tilde u(X,Y,t):=u(X,Y,t)-w_{\lambda,A,\theta,\eta}(Y,t),$$
where $\theta$, $\lambda$, $A$, and $\eta$ are positive degrees of freedom. We intend to prove that
\begin{align}
\label{ineq}
u(X,Y,t)-w_{\lambda,A,\theta,\eta}(Y,t)=\tilde u(X,Y,t)\leq v(X,Y,t),
\end{align}
for all $(X,Y,t)\in U_X\times\mathbb R^m\times I$. With $\lambda$ and $A$ fixed, we can then let $\theta\to 0$ and $\eta\to 0$ in \eqref{ineq} and as a consequence $u\leq v$ on $U_X\times\mathbb R^m\times I$. Note that
\begin{align*}
(\partial_t-X\cdot\nabla_Y)w(X,Y,t)&=\frac\theta 2 e^{-\lambda (T-t)}\bigl (-2X\cdot Y+\lambda (|Y|^2+A)\bigr )+\eta/(T-t)^2\notag\\
&\geq \frac\theta 2 e^{-\lambda T}(\lambda (|Y|^2+A)-2R|Y|),
\end{align*}
as $|X|\leq R$ on $U_X$. Hence, if we let $A:=2R^2/\lambda^2$ then
\begin{align}\label{super}
(\partial_t-X\cdot\nabla_Y)w(X,Y,t)\geq \frac\theta 2 e^{-\lambda T}(\lambda (|Y|^2+A)-2R|Y|)\geq \frac\theta 2 e^{-\lambda T}R^2/\lambda>0.
\end{align}
In the following, $A$ is fixed as above.

To prove \eqref{ineq},  we argue by contradiction and we assume that there is a point $(X^*, Y^*, t^*) \in U_X \times \R^m \times I$ such that
\begin{align*}
\tilde u(X^*, Y^*, t^*) - v(X^*, Y^*, t^*) > 0.
\end{align*}
For $j\in \N$, we introduce the function
\begin{align}\label{contra2gg}
\notag w_j&: \bar U_X \times \R^m \times [0,T) \times \bar U_X \times \R^m \times [0,T] \to \R,
\\
w_j(X,Y,t,\tilde X,\tilde Y,\tilde t)&:=\tilde u(X,Y,t)-v(\tilde X,\tilde Y,\tilde t)
-\bigl(\frac {j^4}{4}|X-\tilde X|^4+\frac {j^4}{4}|Y-\tilde Y|^4+\frac j2|t-\tilde t|^2\bigr ).
\end{align}
Note that
\begin{align*}
 \sup w_j \geq w_j(X^*, Y^*, t^*,X^*, Y^*, t^*) = \tilde u(X^*, Y^*, t^*) - v(X^*, Y^*, t^*) > 0.
\end{align*}
The boundedness of $u$ and $v$ and the form of the last term in the definition of $w(Y,t)$ show that $w_j$ is negative if $t$ is close to $T$. Considering the last term of \eqref{contra2gg}, we note that $w_j$ is also negative if $j$ is large and $\tilde t$ is close to $T$. Let $B<\infty$ be such that $|u| \leq B$ and $|v|\leq B$. Then
\begin{align*}
 w_j \leq 2B - \frac\theta 2 e^{-\lambda T}(|Y|^2+A) - \frac{1}{4}|Y- \tilde Y|^4
\end{align*}
Due to the second term on the right-hand side, $w_j$ is negative if $|Y|$ is sufficiently large. The last term shows that in order for $w_j$ to be positive also $\tilde Y$ must be confined in some ball. Thus there is $\tilde R> 0$ such that $w_j$ is negative unless $Y, \tilde Y \in \overline{B(0, \tilde R)}$.
Finally, if $X\in \partial U_X$ or if $t=0$,  the fact that $u$ and $v$ coincide on the parabolic boundary gives that
\begin{align}\label{w_j:when_X_on_bdry}
 w_j(X,Y,t,\tilde X, \tilde Y,\tilde t) =& v(X,Y,t) - v(\tilde X, \tilde Y, \tilde t) - w(Y,t)\notag\\
  &- \bigl(\frac {j^4}{4}|X-\tilde X|^4+\frac {j^4}{4}|Y-\tilde Y|^4+\frac j2|t-\tilde t|^2\bigr ).
\end{align}
Note that $-w(Y,t) \leq - c$ for some positive constant $c$. Hence, if the distance of the points $(X,Y,t)$ and $(\tilde X, \tilde Y, \tilde t)$ is smaller than some fixed limit $d$, then the right-hand side of \eqref{w_j:when_X_on_bdry} is negative due to the uniform continuity of $v$. On the other hand, if the distance is larger than $d$, then the right-hand side is also negative provided that $j$ is sufficiently large. Similarly we can show that $w_j$ is negative for large $j$ if $\tilde X \in \partial U_X$ or if $\tilde t = 0$. All in all, these observations show that there exist, for large $j\in \N$ and $\tilde R>0$,  points $(X_j,Y_j,t_j,\tilde X_j,\tilde Y_j,\tilde t_j)$ in $(U_X \times \overline{B(0,\tilde R)} \times I)\times (U_X \times \overline{B(0,\tilde R)} \times I)$ at which the supremum of $w_j$ is attained.

As in the proof of Proposition 3.7 in \cite{crandallil92}, we see that
\begin{eqnarray}\label{contra4-gg}
\frac {j^4}{4}|X_j-\tilde X_j|^4\rightarrow 0,\ \frac {j^4}{4}|Y_j-\tilde Y_j|^4\rightarrow 0,\mbox{ and }\frac j2|t_j-\tilde t_j|^2\rightarrow 0.
\end{eqnarray}
and since the points $(X_j,Y_j,t_j,\tilde X_j,\tilde Y_j,\tilde t_j)$ are confined to a compact set, we can after passing to a subsequence deduce that
they converge to a point which in light of \eqref{contra4-gg} must be of the form $(\hat X, \hat Y, \hat t, \hat X, \hat Y, \hat t)$.


Assume that $X_{j_l}=\tilde X_{j_l}$ for an infinite sequence $\{j_l\}_l$ with $j_l\geq j_0$.  Let
\begin{eqnarray*}
\quad\varphi_{j}(X,Y,t,\tilde X,\tilde Y,\tilde t):=\frac {{j^4}}{4}|X-\tilde X|^4+\frac {{j^4}}{4}|Y-\tilde Y|^4+\frac {j}2|t-\tilde t|^2.
\end{eqnarray*}
Then
\begin{eqnarray*}
(\tilde X,\tilde Y,\tilde t)\to v(\tilde X,\tilde Y,\tilde t)+\varphi_{j_l}(X_{j_l},Y_{j_l},t_{j_l},\tilde X,\tilde Y,\tilde t)
\end{eqnarray*}
has a local minimum at $(\tilde X_{j_l},\tilde Y_{j_l},\tilde t_{j_l})$, and
\begin{eqnarray*}
(X, Y,t)\to \tilde u(X,Y,t)-\varphi_{j_l}(X,Y,t,\tilde X_{j_l},\tilde Y_{j_l},\tilde t_{j_l})
\end{eqnarray*}
has a local maximum at $(X_{j_l},Y_{j_l},t_{j_l})$. The first statement implies that there exists an open set $D\subset\mathbb R^{M+1}$, containing
$(\tilde X_{j_l},\tilde Y_{j_l},\tilde t_{j_l})$, such that on $D$,  $v$ is touched from below at $(\tilde X_{j_l},\tilde Y_{j_l},\tilde t_{j_l})$ by the function
\begin{eqnarray*}
\quad\quad\quad\phi_1(\tilde X,\tilde Y,\tilde t):=v(\tilde X_{j_l},\tilde Y_{j_l},\tilde t_{j_l})+\varphi_{j_l}(X_{j_l},Y_{j_l},t_{j_l},\tilde X_{j_l},\tilde Y_{j_l},\tilde t_{j_l})-\varphi_{j_l}(X_{j_l},Y_{j_l},t_{j_l},\tilde X,\tilde Y,\tilde t).
\end{eqnarray*}
The second statement implies that there exists an open set $D\subset\mathbb R^{M+1}$, containing $(X_{j_l},Y_{j_l},t_{j_l})$, such that on $D$,  $\tilde u$ is touched from above at $(X_{j_l},Y_{j_l},t_{j_l})$ by the function
\begin{eqnarray*}
\quad\quad\quad\phi_2(X,Y,t):=\tilde u(X_{j_l},Y_{j_l},t_{j_l})-\varphi_{j_l}( X_{j_l}, Y_{j_l},t_{j_l},\tilde X_{j_l},\tilde Y_{j_l},\tilde t_{j_l})+\varphi_{j_l}(X,Y,t,\tilde X_{j_l},\tilde Y_{j_l},\tilde t_{j_l}).
\end{eqnarray*}
Note that the last statement implies that on $D$,  $u$ is touched from above at $(X_{j_l},Y_{j_l},t_{j_l})$ by the function
\begin{align*}
\tilde\phi_2(X,Y,t)&:=u(X_{j_l},Y_{j_l},t_{j_l})-w(Y_{j_l},t_{j_l})-\varphi_{j_l}( X_{j_l}, Y_{j_l},t_{j_l},\tilde X_{j_l},\tilde Y_{j_l},\tilde t_{j_l})\notag\\
&+\varphi_{j_l}(X,Y,t,\tilde X_{j_l},\tilde Y_{j_l},\tilde t_{j_l})+w(Y,t),
\end{align*}
where  $w(Y,t)=w_{\lambda,A,\theta,\eta}(Y,t)$ was introduced above.

Using that $X_{j_l}=\tilde X_{j_l}$ we deduce from the first statement, as $v$ is a viscosity solution, that
\begin{align*}
0\leq (\partial_{\tilde t}\phi_1-\tilde X\cdot\nabla_{\tilde Y}\phi_1)(\tilde X_{j_l},\tilde Y_{j_l},\tilde t_{j_l})&=((\tilde X\cdot\nabla_{\tilde Y}-\partial_{\tilde t})\varphi_{j_l}(X_{j_l},Y_{j_l},t_{j_l},\cdot,\cdot,\cdot))(\tilde X_{j_l},\tilde Y_{j_l},\tilde t_{j_l})\notag\\
&=- {{j_l^4}}(X_{j_l}\cdot (Y_{j_l}-\tilde Y_{j_l})) |Y_{j_l}-\tilde Y_{j_l}|^2+{j_l}(t_{j_l}-\tilde t_{j_l}).
\end{align*}
I.e., we deduce
\begin{align}\label{Conc1}
{{j_l^4}}(X_{j_l}\cdot (Y_{j_l}-\tilde Y_{j_l})) |Y_{j_l}-\tilde Y_{j_l}|^2-{j_l}(t_{j_l}-\tilde t_{j_l})\leq 0.
\end{align}
From the second statement, we deduce, as $u$ is a viscosity solution and by using \eqref{super}, that
\begin{align*}
0\geq (\partial_{t}\tilde\phi_2-X\cdot\nabla_{Y}\tilde\phi_2)(X_{j_l},Y_{j_l},t_{j_l})=&((\partial_{t}-X\cdot\nabla_{Y})\varphi_{j_l}(\cdot,\cdot,\cdot,\tilde X_{j_l},\tilde Y_{j_l},\tilde t_{j_l}))(X_{j_l},Y_{j_l},t_{j_l})\notag\\
&+(\partial_{t}-X_{j_l}\cdot\nabla_{Y})w(Y_{j_l},t_{j_l})\notag\\
\geq &- {{j_l^4}}(X_{j_l}\cdot (Y_{j_l}-\tilde Y_{j_l})) |Y_{j_l}-\tilde Y_{j_l}|^2+{j_l}(t_{j_l}-\tilde t_{j_l})\notag\\
&+\frac\theta 2 e^{-\lambda T}R^2/\lambda.
\end{align*}
I.e., we deduce
\begin{align}\label{Conc2}
{{j_l^4}}(X_{j_l}\cdot (Y_{j_l}-\tilde Y_{j_l})) |Y_{j_l}-\tilde Y_{j_l}|^2-{j_l}(t_{j_l}-\tilde t_{j_l})\geq \frac\theta 2 e^{-\lambda T}R^2/\lambda.
\end{align}
From \eqref{Conc1} and \eqref{Conc2} we conclude a contradiction and therefore either our original assumption must be incorrect, and then we are done, or $X_j\neq \tilde X_j$ for all $j\geq j_0$ and for some $j_0\gg 1$.

Assume now that there is a $j_0 \in \N$ such that $X_j\neq \tilde X_j$ for all $j\geq j_0$. In this case we use Theorem 3.2 in \cite{crandallil92} with the choices $k=2$, $u_1=\tilde u$, $u_2 = -v$, $\hat x = P_j:=(X_j, Y_j, t_j, \tilde X_j, \tilde Y_j, \tilde t_j)$. In our case, the function $w$ in Theorem 3.2 in \cite{crandallil92} is $w_j$ and the function $\phi$ corresponds to $\varphi_j$. Similarly as in the proof of Lemma \ref{Vissolsimp} this allows us to conclude that there are symmetric $(M+1)\times (M+1)$ matrices $E, H$ such that
\begin{align*}
 (\nabla_{X,Y,t}\varphi_j(P_j), H) &\in \bar J^{2,+}(u-w)(X_j,Y_j,t_j),
 \\
 (\nabla_{\tilde X, \tilde Y, \tilde t} \varphi_j(P_j), E) &\in \bar J^{2,+}(-v)(\tilde X_j, \tilde Y_j, \tilde t_j),
 \\
 H + E &\leq 0.
\end{align*}
Hence, we have sequences
\begin{align*}
 (X^k_j, Y^k_j, t^k_j) &\to (X_j, Y_j, t_j),\hspace{5mm} \eta_k \to \nabla_{X,Y,t}\varphi_j(P_j), \hspace{5mm} H_k \to H,
 \\
 (\tilde X^k_j, \tilde Y^k_j, \tilde t^k_j) &\to (\tilde X_j, \tilde Y_j, \tilde t_j), \hspace{5mm} \xi_k \to \nabla_{\tilde X, \tilde Y, \tilde t} \varphi_j(P_j),\hspace{5mm} E_k \to E,
\end{align*}
such that
\begin{align*}
 (\eta_k, H_k) &\in J^{2,+}(u-w)(X^k_j, Y^k_j,t^k_j),
 \\
 (\xi_k, E_k) &\in J^{2,+}(-v)(\tilde X^k_j, \tilde Y^k_j, \tilde t^k_j).
\end{align*}
As noted in \cite{crandallil92}, we can find a $C^2$ function $f$ touching $v$ from below at $(\tilde X^k_j,\tilde Y^k_j,\tilde t^k_j)$, and a $C^2$ function $g$ touching $\tilde u = u -w$ from above at $(X^k_j, Y^k_j, t^k_j)$, such that
\begin{align}
 \label{f_xi_E}(\nabla_{X,Y,t} f, \nabla_{X,Y,t}^2 f)(\tilde X^k_j, \tilde Y^k_j, \tilde t^k_j) &= (-\xi_k, -E_k),
 \\
 \label{g_eta_H}(\nabla_{X,Y,t} g, \nabla_{X,Y,t}^2 g)(X^k_j, Y^k_j, t^k_j) &= (\eta_k, H_k).
\end{align}
By the assumption $X_j \neq \tilde X_j$ and the convergence of $\xi_k$, we see that $\nabla_X f\neq 0$ for large $k$ and thus, since $v$ is a viscosity solution we have
\begin{align*}
 (m+p)(\partial_tf - \tilde X^k_j\cdot \nabla_Yf)(\tilde X^k_j, \tilde Y^k_j, \tilde t^k_j) \geq ((p-2)\Delta^N_{\infty,X} + \Delta_X)f(\tilde X^k_j, \tilde Y^k_j, \tilde t^k_j),
\end{align*}
which using \eqref{f_xi_E} can be written as
\begin{align*}
 (m+p)(-\xi^t_k + \tilde X^k_j\cdot \xi^Y_k) \geq -(p-2)(\hat \xi^X_k)^T E^X_k \hat \xi^X_k - \tr(E^X_k).
\end{align*}
The superscripts $X,Y,t$  of $\xi$ indicate the components and the hat indicates taking the unit vector. $E_k^X$ refers to the subminor of $E_k$ corresponding to the $X$-components. Passing to the limit $k\to \infty$ we end up with
\begin{align}\label{k_to_infty_v}
 (m+p)(\partial_t \varphi_j(P_j) - \tilde X_j \cdot \nabla_Y \varphi_j(P_j)) \geq -(p-2)\widehat{\nabla_X\varphi_j(P_j)}^T E^X \widehat{\nabla_X\varphi_j(P_j)} - \tr(E^X),
\end{align}
where $\widehat{\nabla_X\varphi_j(P_j)}$ equals $\nabla_X\varphi_j(P_j)/|\nabla_X\varphi_j(P_j)|$. Similarly, as $g+w$ touches $u$ from above,
\begin{align*}
 (m+p)(\partial_t(g+w) - X^k_j\cdot \nabla_Y(g+w))(X^k_j,Y^k_j,t^k_j) \leq ((p-2)\Delta^N_{\infty,X} + \Delta_X)g( X^k_j, Y^k_j, t^k_j).
\end{align*}
Combining this estimate with \eqref{super} and \eqref{g_eta_H} we obtain
\begin{align*}
 \frac\theta 2 e^{-\lambda T}R^2/\lambda + (m+p)(\eta^t_k - X^k_j\cdot \eta^Y_k) \leq (p-2)(\hat \eta^X_k)^T H^X_k \hat \eta^X_k + \tr(H^X_k),
\end{align*}
and passing to the limit $k\to\infty$ we have
\begin{align}\label{k_to_infty_u}
 &\frac\theta 2 e^{-\lambda T}R^2/\lambda + (m+p) (\partial_t \varphi_j(P_j) - X_j \cdot \nabla_Y \varphi_j(P_j))\notag\\
  &\leq (p-2) \widehat{\nabla_X\varphi_j(P_j)}^T H^X \widehat{\nabla_X\varphi_j(P_j)} + \tr(H^X).
\end{align}
Combining \eqref{k_to_infty_v} and \eqref{k_to_infty_u} we see that
\begin{align}\label{lexp}
 \frac\theta 2 e^{-\lambda T}R^2/\lambda + (m+p)(\tilde X_j - X_j)\cdot \nabla_Y\varphi_j(P_j) \leq& (p-2)\widehat{\nabla_X\varphi_j(P_j)}^T (H^X + E^X) \widehat{\nabla_X\varphi_j(P_j)}\notag\\
 & + \tr(H^X + E^X).
\end{align}
Denoting the eigenvalues of $H^X + E^X$ by $(\lambda_j)_{j=1}^m$ we see that they must all be nonpositive since $H^X + E^X \leq 0$. Letting $z \in \mathbb{S}^{m-1}$ denote the coordinate vector of $\widehat{\nabla_X\varphi_j(P_j)}$ with respect to a corresponding orthonormal basis of eigenvectors. Then  we may express the right-hand side in \eqref{lexp} as
\begin{align*}
 (p-2)\sum^m_{j=1} \lambda_j z_j^2 + \sum^m_{j=1} \lambda_j = (p-1)\sum^m_{j=1} \lambda_j z_j^2 + \sum^m_{j=1} \lambda_j(1-z_j^2) \leq 0.
\end{align*}
Thus, we have proved that
\begin{align*}
 \frac\theta 2 e^{-\lambda T}R^2/\lambda + (m+p)(\tilde X_j - X_j)\cdot \nabla_Y\varphi_j(P_j) \leq 0.
\end{align*}
As in the proof of Lemma \ref{Vissolsimp} we see that the second term on the left-hand side vanishes in the limit $j\to \infty$ and we have again reached a contradiction.
\end{proof}

Using the uniqueness result in Lemma \ref{raja+}, the convergence result of Lemma \ref{conver}, and Lemma \ref{raja}, we can finally state and prove the main theorem of this section.
\begin{theorem}\label{thm:existence_uniqueness}
Suppose that $F:\Gamma_\epsilon \to \R$ is a bounded function satisfying \eqref{regu}. Let $\{u_{\eps}\}$, $\eps >0$ be the value functions of the tug-of-war game with payoff equal to $F$ on $\Gamma_\epsilon$. Then $u_\epsilon$ converges uniformly on compact subsets of $\overline{U_X}\times \R^m \times [0,T]$ as $\epsilon \to 0$ to the unique bounded viscosity solution $u$ to \eqref{Dppa}. Moreover, $u$ is continuous.
\end{theorem}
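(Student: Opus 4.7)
The plan is to assemble the conclusion from the four preceding results: Lemma \ref{conver} (existence of a convergent subsequence with the correct boundary behavior), Lemma \ref{raja} (any such limit is a viscosity solution), Lemma \ref{raja+} (bounded viscosity solutions of \eqref{Dppa} are unique), and Theorem \ref{u1} (the value functions $u_\epsilon$ coincide with the $(p,\epsilon)$-Kolmogorov functions, so the hypotheses used in the previous lemmas apply). The uniform boundedness $\|u_\epsilon\|_\infty \leq \|F\|_\infty$ needed throughout follows immediately from the definition of the value function via expected payoff.

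First, I would apply Lemma \ref{conver} to extract a sequence $\epsilon_j \downarrow 0$ such that $u_{\epsilon_j}$ converges uniformly on compact subsets of $\overline{U_X}\times \R^m \times [0,T]$ to a continuous function $u$, which moreover satisfies $u = F$ on $\Gamma^1\cup\Gamma^2$. Next, Lemma \ref{raja} yields that this $u$ is a viscosity solution to $\K_p u = 0$ in $U_X \times \R^m \times I$, so $u$ solves \eqref{Dppa}. The function $u$ is bounded since it is the uniform limit on compact sets of the uniformly bounded family $\{u_{\epsilon_j}\}$.

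The step that requires care is upgrading convergence along a subsequence to convergence of the full family $\{u_\epsilon\}_{\epsilon>0}$. For this I would argue by contradiction: suppose there exist a compact set $K\subset \overline{U_X}\times \R^m\times [0,T]$, a number $\delta_0 > 0$ and a sequence $\epsilon_k' \downarrow 0$ with
\begin{equation*}
\sup_{K} |u_{\epsilon_k'} - u| \geq \delta_0 \quad \text{for every } k.
\end{equation*}
The family $\{u_{\epsilon_k'}\}$ still satisfies the uniform boundedness bound and, since the quantitative continuity estimates leading to Lemmas \ref{cruc} and \ref{cruc++} depend only on $F$ and on $\epsilon$ being small, they also satisfy the equicontinuity hypothesis of Lemma \ref{Ascoli} on a fixed compact set containing $K$. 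Hence Lemma \ref{conver} applies to $\{u_{\epsilon_k'}\}$ as well (the proof there only uses the properties of value functions with payoff $F$, not the specific choice of $\epsilon_j$), yielding a further subsequence $\epsilon_{k_l}'$ with $u_{\epsilon_{k_l}'} \to \tilde u$ uniformly on compact subsets, where $\tilde u$ is again, by Lemma \ref{raja}, a bounded viscosity solution of \eqref{Dppa}. Lemma \ref{raja+} forces $\tilde u = u$, contradicting the choice of $\epsilon_k'$. Thus $u_\epsilon \to u$ uniformly on compact subsets of $\overline{U_X}\times \R^m\times [0,T]$ as $\epsilon \to 0$.

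Finally, uniqueness of $u$ as a bounded viscosity solution of \eqref{Dppa} is exactly Lemma \ref{raja+}, and its continuity was already noted as part of Lemma \ref{conver}. I expect the only slightly subtle point in writing this out carefully to be checking that the equicontinuity estimates in Lemmas \ref{cruc}, \ref{cruc++} and the subsequent interior argument in Lemma \ref{conver} are uniform in the small parameter in a way that makes them transfer to an arbitrary sequence $\epsilon_k' \downarrow 0$; a direct inspection of the proofs shows that all the constants there depend only on $F$, on $U_X$, and on $T$, so no additional work is needed.
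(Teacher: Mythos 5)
Your proof proposal is correct and follows essentially the same route as the paper: extract a convergent subsequence via Lemma \ref{conver}, identify the limit as a viscosity solution via Lemma \ref{raja}, deduce uniqueness from Lemma \ref{raja+}, and then upgrade subsequential convergence to full convergence by the standard contradiction argument. The paper additionally makes explicit, as a preliminary remark, that the boundedness of $F$ together with \eqref{regu} implies $F\in\mathcal{G}_\epsilon$, which is what licenses the application of the earlier lemmas; your writeup assumes this implicitly but would benefit from the one-line observation.
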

\begin{proof}
 First note that the boundedness of $F$ and \eqref{regu} together imply that $F\in \mathcal{G}_\epsilon$, so that we can apply the previous results of this section. Combining Lemma \ref{conver} and Lemma \ref{raja} we see that every sequence $u_{\epsilon_j}$ has a subsequence which converges uniformly on compact subsets to a bounded viscosity solution to \eqref{Dppa}. By Lemma \ref{raja+}, these solutions all coincide, and we denote this solution by $u$. By Lemma \ref{conver}, $u$ is continuous. To show that the family $u_\epsilon$ converges uniformly to $u$ on compact subsets, suppose the contrary. Then we have a compact subset $K$, a number $\rho>0$ and a sequence $\epsilon_j \to 0$ such that
 \begin{align*}
  \sup_K |u_{\epsilon_j} - u| \geq \rho, \hspace{7mm}j \in \N.
 \end{align*}
But then there would be no subsequence of $u_{\epsilon_j}$ converging to $u$, which is a contradiction.
\end{proof}
The following corollary provides some sufficient conditions for the existence and uniqueness of solutions to \eqref{Dppa} in the case that $F$ is only a priori defined on the parabolic boundary $\Gamma^1\cup \Gamma^2$. Note the exceptional exponent $1/2$ rather than $1/3$ in the middle term on the right-hand side of \eqref{cond:F_Lip_wrt_weird_metric}. For example, all bounded functions which are Lipschitz with respect to the Euclidean metric satisfy the condition \eqref{cond:F_Lip_wrt_weird_metric}.
\begin{corollary}\label{cor:F_only_def_on_par_bdry}
 Let $F:\Gamma^1\cup \Gamma^2\to \R$ be bounded and suppose that
 \begin{align}\label{cond:F_Lip_wrt_weird_metric}
  |F(X,Y,t)-F(\tilde X, \tilde Y, \tilde t)|\leq c\big(|X - \tilde X| + |Y - \tilde Y|^\frac12 + |t - \tilde t|^\frac12\big).
 \end{align}
Then there is a unique viscosity solution to \eqref{Dppa}.
\end{corollary}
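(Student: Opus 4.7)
The plan is to reduce the statement to Theorem \ref{thm:existence_uniqueness} by extending $F$ from $\Gamma^1\cup\Gamma^2$ to some $\Gamma_{\epsilon_0}$. Uniqueness is immediate and does not require any extension: Lemma \ref{raja+} applies directly to any two bounded viscosity solutions of \eqref{Dppa}.

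For existence, fix $\epsilon_0>0$ smaller than the reach of $\partial U_X$. The first step is to check that on $\Gamma^1\cup\Gamma^2$ the hypothesis \eqref{cond:F_Lip_wrt_weird_metric}, combined with the boundedness of $F$, already implies the $d_\K$-Lipschitz condition \eqref{regu}. The only subtle point is passing from $|Y-\tilde Y|^{1/2}$ to $|Y-\tilde Y-(\tilde t-t)\tilde X|^{1/3}$. Using $|\tilde X|\le R$ on $\overline{U_X}$,
\begin{align*}
|Y-\tilde Y|^{1/2}\le|Y-\tilde Y-(\tilde t-t)\tilde X|^{1/2}+R^{1/2}|t-\tilde t|^{1/2},
\end{align*}
and then I would split cases: if $|Y-\tilde Y-(\tilde t-t)\tilde X|\le 1$ then $|\cdot|^{1/2}\le|\cdot|^{1/3}$, while if $|Y-\tilde Y-(\tilde t-t)\tilde X|>1$ then $|F(X,Y,t)-F(\tilde X,\tilde Y,\tilde t)|\le 2\|F\|_\infty\le 2\|F\|_\infty|Y-\tilde Y-(\tilde t-t)\tilde X|^{1/3}$ by boundedness.

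Next, I would produce the extension explicitly by the retraction
\begin{align*}
\tilde F(X,Y,t):=F(X^*,Y,t^*),\qquad(X,Y,t)\in\Gamma_{\epsilon_0},
\end{align*}
where $t^*:=\max(t,0)$, $X^*:=X$ if $X\in U_X$, and $X^*:=\Pi(X)\in\partial U_X$ (the nearest-point projection) otherwise. Since $\epsilon_0$ lies below the reach of $\partial U_X$, $\Pi$ is Lipschitz on $\Gamma_X^{\epsilon_0}$, and since $t\mapsto\max(t,0)$ is $1$-Lipschitz, $\tilde F$ is bounded, Borel measurable in $X$, and uniformly continuous in $Y$, so that $\tilde F\in\mathcal{G}_{\epsilon_0}$. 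A case-by-case verification (both $X_i\in U_X$; both $X_i\notin U_X$; one of each) then yields the global $d_\K$-Lipschitz estimate \eqref{regu}; the key geometric input for the mixed case is that if $X_1\in U_X$ and $X_2\notin\overline{U_X}$, the segment from $X_1$ to $X_2$ crosses $\partial U_X$, so $\operatorname{dist}(X_2,\partial U_X)\le|X_1-X_2|$ and hence $|X_1^*-X_2^*|\le 2|X_1-X_2|$.

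The main obstacle will be this case analysis, especially for pairs of points sitting on opposite sides of the non-smooth corner $\partial U_X\times\mathbb R^m\times\{0\}$, where the Euclidean estimates on $X^*$ and $t^*$ must be combined with the $Y$-reduction described above and with the Lipschitz estimate for $\Pi$. Once $\tilde F$ is verified to satisfy \eqref{regu}, Theorem \ref{thm:existence_uniqueness} provides a bounded viscosity solution $u$ to $\K_p u=0$ in $U_X\times\mathbb R^m\times I$ with $u|_{\Gamma_{\epsilon_0}}=\tilde F$, and restricting to $\Gamma^1\cup\Gamma^2$ then gives the desired solution to \eqref{Dppa}.
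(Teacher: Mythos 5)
Your proof is correct, and it takes a genuinely different route from the paper's at the extension step. The paper notes that the right-hand side of \eqref{cond:F_Lip_wrt_weird_metric} defines a metric $\hat d$ on $\R^{M+1}$, applies the McShane--Whitney lemma to obtain an $\hat d$-Lipschitz extension of $F$ to all of $\R^{M+1}$, truncates at $\pm\|F\|_\infty$ to restore boundedness, and only then performs the $\hat d$-to-$d_{\mathcal K}$ comparison on $\Gamma_\epsilon$ (splitting according to $|Y-\tilde Y|\lessgtr 2\kappa$, where $\kappa$ bounds $|(t-\tilde t)\tilde X|$). You instead build the extension by composing $F$ with the explicit retraction $(X,Y,t)\mapsto(X^*,Y,t^*)$; this stays bounded automatically (no truncation needed) and its restriction to each coordinate is Euclidean-Lipschitz, so $\tilde F$ inherits the $\hat d$-Lipschitz property directly, after which the same $\hat d$-to-$d_{\mathcal K}$ conversion applies. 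The two routes buy slightly different things: McShane--Whitney is a black box that needs no geometric input beyond the metric axioms and avoids any case analysis, whereas your retraction is concrete and keeps track of the boundary geometry explicitly, at the cost of invoking the reach of $\partial U_X$ (available here since $U_X$ is $C^2$) and of verifying a handful of cases. Two small remarks on the write-up. First, your ``first step'' (verifying the $\hat d$-to-$d_{\mathcal K}$ passage on $\Gamma^1\cup\Gamma^2$) is good motivation but is not where the conversion must actually be carried out -- the order of operations has to be: retract, deduce the $\hat d$-Lipschitz bound for $\tilde F$ on $\Gamma_{\epsilon_0}$ using $|X_1^*-X_2^*|\lesssim|X_1-X_2|$ and $|t_1^*-t_2^*|\le|t_1-t_2|$, and only then convert $\hat d$ to $d_{\mathcal K}$; trying to compose a $d_{\mathcal K}$-Lipschitz $F$ with the retraction directly would fail because the retraction is not $d_{\mathcal K}$-to-$d_{\mathcal K}$-Lipschitz (the product $(t_2-t_1)X_2$ changes with the wrong scaling). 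Your concluding remark about combining the Euclidean estimates with the $Y$-reduction suggests you intend the correct order. Second, $\Gamma^1=\partial U_X\times\R^m\times[0,T)$ excludes $t=T$, while $\Gamma_{\epsilon_0}^1$ includes it, so the retraction as written can land outside the domain of $F$; this is harmless since \eqref{cond:F_Lip_wrt_weird_metric} lets $F$ extend continuously to $t=T$, but it should be said.
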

\begin{proof}
 Note that
 \begin{align*}
  \hat d((X,Y,t), (\tilde Y, \tilde X, \tilde t)) := |X - \tilde X| + |Y - \tilde Y|^\frac12 + |t - \tilde t|^\frac12,
 \end{align*}
defines a metric in $\R^{M+1}$. By \eqref{cond:F_Lip_wrt_weird_metric}, $F$ is Lipschitz with respect to $\hat d$ so by the McShane-Whitney extension lemma we can extend $F$ to a function which is Lipschitz with respect to $\hat d$ on $\R^{M+1}$. Furthermore, recalling that the original $F$ was bounded we see that truncating the extended $F$ from above and below results in a bounded $\hat d$-Lipschitz extension of $F$ defined on all of $\R^{M+1}$. By Theorem \eqref{thm:existence_uniqueness}, it is sufficient to show that $F$ satisfies \eqref{regu} on some $\Gamma_{\epsilon}$. In order to do this, fix an arbitrary $\epsilon>0$ and note that due to the boundedness of $U_X$ we have that
\begin{align*}
 |(t-\tilde t) \tilde X| \leq \kappa < \infty,
\end{align*}
whenever $(X,Y,t), (\tilde X, \tilde Y, \tilde t) \in \Gamma_\epsilon$. In the case that $|Y - \tilde Y| > 2\kappa$, we thus have
\begin{align*}
 d((X,Y,t),(\tilde X, \tilde Y, \tilde t)) \geq |Y-\tilde Y + (t-\tilde t)\tilde X|^\frac13 \geq \kappa^\frac13 \geq (2||F||_\infty + 1)^{-1} \kappa^\frac13 |F(X,Y,t) - F(\tilde X,\tilde Y, \tilde t)|.
\end{align*}
On the other hand, if $|Y - \tilde Y| \leq 2\kappa$ we may estimate
\begin{align*}
 |Y- \tilde Y|^\frac12 &\leq |Y-\tilde Y + (t-\tilde t)\tilde X|^\frac12 + |(t-\tilde t)|^\frac12|\tilde X|^\frac12
 \\
 &\leq |Y-\tilde Y + (t-\tilde t)\tilde X|^\frac16 |Y-\tilde Y + (t-\tilde t)\tilde X|^\frac13 + c|t-\tilde t|^\frac12
 \\
 &\leq (3\kappa)^\frac16 |Y-\tilde Y + (t-\tilde t)\tilde X|^\frac13 + c|t-\tilde t|^\frac12.
\end{align*}
Combining the last estimate with \eqref{cond:F_Lip_wrt_weird_metric} we can again bound $|F(X,Y,t) - F(\tilde X, \tilde Y, \tilde t)|$ by the quantity $d((X,Y,t),(\tilde X, \tilde Y, \tilde t))$ modulo a multiplicative constant. Since $d$ and $d_\mathcal{K}$ are comparable in size, we have proved \eqref{regu}.
\end{proof}
We have opted to work with the quasi-metric $d_{\mathcal{K}}$ which reflects the natural scaling in the equation. It seems that the previous arguments could also be carried out if one modifies the exponents $1, \tfrac13, \tfrac12$ present in the definition of $d_{\mathcal{K}}$, which at the cost of increased technicality would allow for a weaker assumption than \eqref{cond:F_Lip_wrt_weird_metric}.

\section{Future research and open problems}\label{OP}
In this paper, we have  completed one version  of Tug-of-war with Kolmogorov in domains of the form $U_X\times\mathbb R^m\times I$. There are many open research problems to pursue and we here just state three of them.\\

\noindent{Problem 1}: Develop a more complete theory concerning the existence and uniqueness of viscosity solutions to
\begin{align}\label{Dppaintr+}
\begin{cases}
\K_p u(X,Y,t)=0  ,\quad  &\textrm{for}\quad (X,Y,t)\in \Omega,\\
u(X,Y,t)  =  F(X,Y,t)
,\quad  &\textrm{for}\quad  (X,Y,t)\in \partial_\K\Omega,
\end{cases}
\end{align}
in potentially  velocity $(X)$, position $(Y)$ as well as time $(t)$ dependent domains $\Omega\subset\mathbb R^{M+1}$. Already the cylindrical cases
$\Omega=U_X\times U_Y\times I$, with $U_Y\subset\mathbb R^m$ smooth and bounded,  and $\Omega=\mathbb R^m\times U_Y\times I$, are very interesting.\\

\noindent{Problem 2}: Consider $\Omega=U_X\times U_Y\times I$, with $U_Y\subset\mathbb R^m$ smooth and bounded. Construct a tug-of-war game in $\Omega$ such that
the (fair) value function of the  game $u_\epsilon$ satisfies  $u_\epsilon\to u$, as $\epsilon\to 0$, where $u$ is the unique viscosity solution to
\eqref{Dppaintr+}.\\

\noindent{Problem 3}: Let $u$ be a non-negative viscosity solutions to $\K_p u=0$ in $\Omega$. Prove a Harnack inequality. Are viscosity solutions locally H{\"o}lder continuous?\\

\noindent{\bf Acknowledgements}. We would like to thank an anonymous referee for emphasizing the subtle measure theoretical issues underlying the analysis
in Section \ref{sec3} and for encouraging us to reread \cite{luirops14}. This helped us to improve  the analysis in Section \ref{sec3}. The second author thanks Juan Manfredi for some preliminary communications concerning this project and Svante Janson for discussions concerning (Borel) measurability.


\begin{thebibliography}{99}

\bibitem{crandallil92}
M.~G. Crandall, H.~Ishii, and P.-L. Lions.
\newblock User's guide to viscosity solutions of second order partial
  differential equations.
\newblock {\em Bull. Amer. Math. Soc. (N.S.)}, 27(1):1--67, 1992.

\bibitem{H}
{L.~H{\"o}rmander}, {Hypoelliptic second order differential equations},
 {\em Acta Math}., 119: 147--171, 1967.


\bibitem{K} A. Kolomogorov, { Zufllige Bewegungen. (Zur Theorie der Brownschen Bewegung)},
{\em Ann. of Math., II. Ser.},  35: 116--117, 1934.


\bibitem{juutinenk06}
P.~Juutinen and B.~Kawohl.
\newblock On the evolution governed by the infinity {L}aplacian.
\newblock {\em Math. Ann.}, 335(4):819--851, 2006.

\bibitem{juutinenlm01}
P.~Juutinen, P.~Lindqvist, and J.~J. Manfredi.
\newblock On the equivalence of viscosity solutions and weak solutions for a
  quasi-linear equation.
\newblock {\em SIAM J. Math. Anal.}, 33(3):699--717, 2001.

\bibitem{kawohlmp12}
B.~Kawohl, J.~J. Manfredi, and M.~Parviainen.
\newblock Solutions of nonlinear {PDE}s in the sense of averages.
\newblock {\em J. Math. Pures Appl.}, 97(2):173--188, 2012.


\bibitem{luirop}
H.~Luiro and M.~Parviainen.
\newblock Regularity for nonlinear stochastic games.
\newblock {\em  Ann. Inst. H. Poincar\`e Anal. Non Lin\`eaire}, 35 (6):1435-1456, 2018.

\bibitem{luirops13}
H.~Luiro, M.~Parviainen, and E.~Saksman.
\newblock Harnack's inequality for $p$-harmonic functions via stochastic games.
\newblock {\em Comm.\ Partial Differential Equations}, 38(11):1985--2003, 2013.

\bibitem{luirops14}
H.~Luiro, M.~Parviainen, and E.~Saksman.
\newblock On the existence and uniqueness of $p$-harmonious functions.
\newblock {\em Differential and Integral Equations}, 27(3/4):201--216, 2014.

\bibitem{MS} A. P. Maitra and W.D. Sudderth.
\newblock Borel stochastic games with lim sup payoff.
\newblock {\em Ann. Prob.} 21(2):861--885, 1993.





\bibitem{manfredipr10c}
J.~J. Manfredi, M.~Parviainen, and J.D. Rossi.
\newblock An asymptotic mean value characterization for a class of nonlinear
  parabolic equations related to tug-of-war games.
\newblock {\em SIAM J. Math. Anal.}, 42(5):2058--2081, 2010.

\bibitem{manfredipr10}
J.J. Manfredi, M.~Parviainen, and J.D. Rossi.
\newblock An asymptotic mean value characterization for $p$-harmonic functions.
\newblock {\em Proc. Amer. Math. Soc.}, 258:713--728, 2010.

\bibitem{manfredipr12}
J.~J. Manfredi, M.~Parviainen, and J.D. Rossi.
\newblock On the definition and properties of $p$-harmonious functions.
\newblock {\em Ann. Scuola Norm. Sup. Pisa Cl. Sci.}, 11(2):215--241, 2012.



\bibitem{Manfredini}
{M.~Manfredini}.
\newblock {The {D}irichlet problem for a class of ultraparabolic   equations}.
\newblock {\em Adv. Differential Equations}, {2}:831--866, 1997.


\bibitem{NP}  K. Nystr{\"o}m and S. Polidoro.
\newblock {Kolmogorov-Fokker-Planck Equations: comparison Principles near Lipschitz type
Boundaries}.
\newblock {\em J. Math. Pures Appl.},  106: 155--202, 2016.



\bibitem{PSSW} Y. Peres, O. Schramm, S. Sheffield and D. Wilson, Tug-of-war and the infinity
Laplacian. \newblock {\em J. Amer. Math. Soc.}, 22:167--210, 2009.

\bibitem{PS} Y. Peres, S. Sheffield, Tug-of-war with noise: a game theoretic view of the
$p$-Laplacian. \newblock {\em Duke Math. J.}, 145(1):91--120, 2008.




\end{thebibliography}
\end{document}